\documentclass[12pt]{amsart}
\usepackage{graphicx}

\usepackage{tikz-cd}

\usepackage{pgf}


%
%
%
%


 \usepackage{url}	
 \allowdisplaybreaks 

\usepackage{tikz-cd}
\usepackage{pgf}

\newtheorem{teo}{Theorem}[section]
\newtheorem{theorem}[teo]{Theorem}

\newtheorem{corollary}[teo]{Corollary}

\newtheorem{lemma}[teo]{Lemma}

\newtheorem{proposition}[teo]{Proposition}

\theoremstyle{definition}
\newtheorem{definition}[teo]{Definition}
\newtheorem{example}[teo]{Example}

\theoremstyle{remark}
\newtheorem{remark}{Remark}

\numberwithin{figure}{section}


\newcommand{\cat}{\mathrm{cat}}
\newcommand{\Crit}{\mathop{\mathrm{Crit}}}
\newcommand{\D}{\mathrm{D}}
\newcommand{\gcat}{\mathrm{gcat}}
\newcommand{\Ho}{\mathrm{H}}
\newcommand{\hw}{\mathrm{hw}}
\newcommand{\id}{\mathrm{id}}
\newcommand{\im}{\mathrm{im\,}}
\newcommand{\lcp}{\mathrm{l.c.p.}\,}
\newcommand{\R}{\mathbb{R}}
\newcommand{\scat}{\mathrm{scat}}
\newcommand{\SD}{\mathrm{SD}}
\newcommand{\secat}{\mathrm{secat}}
\newcommand{\Sp}{\mathrm{Sp}}
\newcommand{\Sq}{\mathrm{Sq}}
\newcommand{\TC}{\mathrm{TC}}
\newcommand{\Tr}{\mathop{\mathrm{Tr}}}
\newcommand{\Z}{\mathbb{Z}}

\begin{document}
\title{Homotopic distance between maps}
\thanks{The first author was partially supported by MINECO research project MTM2016-78647-P and by Xunta de Galicia ED431C 2019/10 with FEDER funds. The second author was partly supported by Ministerio de Ciencia, Innovaci\'on y Universidades,  grant FPU17/03443.}
\author{E. Mac\'{\i}as-Virg\'os \and D. Mosquera-Lois}
\address{Institute of Mathematics, University of Santiago de Compostela, Spain }
\email{quique.macias@usc.es, david.mosquera.lois@usc.es}

\begin{abstract}We show that both Lusternik-Schnirelmann category and topological complexity are particular cases of a more general notion, that we call {\em homotopic distance}  between two maps. As a consequence,  several properties of those invariants can be proved in a unified way and new results arise. 
\end{abstract}

\subjclass[2010]{
Primary: 55M30; 
Secondary: 
55M99,	
55P99,	 
55R10, 
55P45, 
68T40
}

\maketitle

\section{Introduction}\label{INTRO}
In this paper we prove that well known homotopic invariants like the Lusternik-Schnirelmann category $\cat(X)$ of  the topological space $X$ \cite{CORNEA} or the topological complexity $\TC(X)$ \cite{FARBER} can be seen as particular cases of a more general notion, that we call {\em homotopic distance} between two continuous maps $f,g$, denoted $\D(f,g)$. As a consequence, the proofs of several properties of those invariants can be unified in a systematic way, and new results arise.

It can be conjectured  that this unifying approach will give a new insight about the relationship between  $\cat(X)$ and $\TC(X)$; in particular,   the inequalities we found could serve as new lower bounds for  the difficult problem of computing the category and topological complexity  in explicit examples.

The contents of the paper are as follows:

Section \ref{BASIC} is devoted to the basic definitions and examples. Given two continuous maps $f,g\colon X \to Y$ between topological spaces, we say that $\D(f,g)\leq n$ if there exists an open covering $\{U_0,\dots,U_n\}$ of $X$ such that the restrictions $f_{\vert U_j},g_{\vert U_j}\colon U_j \to Y$ are homotopic maps, for all $j=0,\dots,n$.  Then, by definition, $\cat(X)$ is the distance between $\id_X$ and a constant map.
We show that $\cat(X)$ also equals the homotopic distance $\D(i_1,i_2)$ between the two axis inclusions $i_1,i_2\colon X \to X\times X$ (Proposition \ref{INCLCAT}), while $\TC(X)$ equals the homotopic distance $\D(p_1,p_2)$ between the projections $p_1,p_2\colon X\times X \to X$ (Proposition \ref{PROJECT}).

In Section \ref{PROPERTIES} we prove several properties of the homotopic distance, namely its behaviour under compositions and products, and its homotopical invariance. They imply as particular cases well known inequalities like $\cat(X)\leq \TC(X)\leq \cat(X\times X)$ (Corollary \ref{TCCAT1} and Corollary \ref{TCCAT2}),
or 
$\TC(X\times X^\prime) \leq \TC(X)+\TC(X^\prime)$ (Example \ref{PRODTC}).

In Section \ref{HSP} we study $H$-spaces. For instance we prove that for any pair of maps $f,g \colon G\times G \to G$, where $G$ is an $H$-space,  we have $\D(f,g)\leq \cat (G)$ (Theorem \ref{DISTANCEHSPACE}), thus generalizing Farber-Lupton-Scherer's theorem $\TC(G)=\cat(G)$ \cite{FARBER2,LS}.

In Section \ref{COHOMOLOGY} we give a lower cohomological bound for the homotopic distance, in terms of the length of the cup product, namely $D(f,g)\geq \lcp \mathcal{J}(f,g)$ (Theorem \ref{LCP}), where we denote by $\mathcal{J}(f,g)$ the image of $f^*-g^*$ in $H(X)$. Similar results are well known for the particular cases of $\cat(X)$ or $\TC(X)$.

A better result is obtained after defining the so-called {\em homotopy weight} $\hw_{f,g}(u)\geq 1$ of the non-zero cohomology class $u\in \mathcal{J}(f,g)\subset H(X)$. This generalizes ideas from Fadell-Husseini and other authors, and we are able to prove (Theorem \ref{WEIGHT})  that 
if $u_0\smile\cdots\smile u_k\neq 0$, then $$\D(f,g)\geq \sum_{j=0}^k \hw(u_j).$$  

Section \ref{FIBRATIONS} is about fibrations. We generalize both Varadarajan's result \cite{VARADARAJAN} about the relationship between the LS-category of the total space $E$, the fiber $F$ and the base $B$, and a similar result for the topological complexity, due to Farber and Grant \cite[Lemma 7]{FARBER-GRANT}. Explicitly, we prove (Theorem \ref{FIBRATION}) that 
$$\D(f,g)+1\leq \big(\D(f_0,g_0)+1\big)\big(\cat(B)+1\big)$$ for  fibre preserving maps $f,g$  that induce maps $f_0,g_0\colon F_0 \to F_0^\prime$ between the fibers.

In Section \ref{MAINEXAMPLE}  we show an example of a Lie group $G$ and  two maps $f,g\colon G \to G$ such that $\D(f,g)=2=\lcp H(G)$, while $\TC(G)=\cat(G)=3$.

Finally, Section \ref{FURTHER} contains an overview of possible generalizations, like a version in the simplicial setting or an analog of higher topological complexity.

\subsection*{Acknowledgements}
We are grateful to D.~Tanr\'e for useful comments; to J.~Oprea for pointing out to us several references and suggesting the proof of Th. \ref{DISTPRODUCT}; to J.G. Ca\-rras\-quel-Vera for several remarks, including Th.~\ref{DISTSECAT} and Prop.~\ref{TRIANG}; and to J.A.~Barmak for Example~\ref{TRIANG_FAILS}.

\section{Basic notions}\label{BASIC}
\subsection{Homotopic distance}
Let $f,g\colon X \to Y$ be two continuous maps.

\begin{definition}The {\em homotopic distance} $\D(f,g)$ between $f$ and $g$  is the least integer $n\geq 0$ such that there exists an open covering $\{U_0,\dots,U_n\}$ of $X$ with the property that  $f_{\vert U_j}\simeq g_{\vert U_j}$,   for all $j=0,\dots,n$.  If there is no such covering, we define $\D(f,g)=\infty$.
\end{definition}

Notice that:  
\begin{enumerate}
	\item
	$\D(f,g)=\D(g,f)$.
	\item
	$\D(f,g)=0$ if and only if the maps $f,g$ are homotopic.
\end{enumerate}

In fact, the homotopic distance only depends on the homotopy class.

\begin{proposition}\label{HOMOT}If $f\simeq f^\prime$ and $g\simeq g^\prime$ then $\D(f,g)=\D(f^\prime,g^\prime)$.
\end{proposition}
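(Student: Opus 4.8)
The plan is to show that a single open covering witnessing $\D(f,g)\le n$ also witnesses $\D(f',g')\le n$, which gives one inequality; the reverse follows by symmetry of the hypotheses.

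First I would unwind the definition: assume $\D(f,g)\le n$ and pick an open covering $\{U_0,\dots,U_n\}$ of $X$ with $f_{\vert U_j}\simeq g_{\vert U_j}$ for every $j$. The elementary observation I need is that if $H\colon X\times[0,1]\to Y$ is a homotopy between two maps $X\to Y$, then its restriction $H_{\vert U_j\times[0,1]}$ is a homotopy between the corresponding restrictions to $U_j$; in particular, $f\simeq f'$ implies $f_{\vert U_j}\simeq f'_{\vert U_j}$ and likewise $g_{\vert U_j}\simeq g'_{\vert U_j}$ for each $j$.

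Next, I would invoke that homotopy of maps is an equivalence relation (reflexive, symmetric, transitive). Combining the three homotopies of restrictions on each $U_j$,
$$f'_{\vert U_j}\simeq f_{\vert U_j}\simeq g_{\vert U_j}\simeq g'_{\vert U_j},$$
so $f'_{\vert U_j}\simeq g'_{\vert U_j}$ for all $j$. Hence the same covering $\{U_0,\dots,U_n\}$ shows $\D(f',g')\le n$. Since $n$ was an arbitrary upper bound for $\D(f,g)$, we conclude $\D(f',g')\le \D(f,g)$ (this also covers the case $\D(f,g)=\infty$ trivially).

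Finally, because $f\simeq f'$ and $g\simeq g'$ are symmetric relations, we may equally apply the argument with the roles of $(f,g)$ and $(f',g')$ interchanged, obtaining $\D(f,g)\le \D(f',g')$. The two inequalities give $\D(f,g)=\D(f',g')$. I do not expect any real obstacle here: the only subtlety is the routine remark that restricting a homotopy to an open set is still a homotopy, and everything else is formal manipulation of the equivalence relation $\simeq$.
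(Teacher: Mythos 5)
Your proof is correct, and it is exactly the routine argument the paper leaves implicit (the paper states Proposition \ref{HOMOT} without proof): restrict the global homotopies $f\simeq f'$ and $g\simeq g'$ to each $U_j$ and concatenate with $f_{\vert U_j}\simeq g_{\vert U_j}$ using transitivity of $\simeq$, then conclude by symmetry. No gaps.
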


Later (Proposition \ref{INVARIANCEPAIR}) we shall show that $\D(-,-)$ is an invariant of the homotopy class a pair of maps.

Sometimes, the following result is useful.

\begin{proposition}[Sub-additivity property]\label{lema:subadditivity}
	Given two continuous maps $f,g\colon X\to Y$ and a finite open covering $\{U_0,\dots,U_n\}$ of $X$, we have $$\D(f,g)\leq \sum_{k=0}^n\D(f_{\vert U_k},g_{\vert U_k})+ n.$$
\end{proposition}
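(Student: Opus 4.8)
The plan is to combine the open covers. For each index $k \in \{0,\dots,n\}$, write $d_k := \D(f_{\vert U_k}, g_{\vert U_k})$ and assume each $d_k$ is finite (otherwise the right-hand side is $\infty$ and there is nothing to prove). By definition of homotopic distance applied to the restricted maps on the space $U_k$, there is an open cover $\{V^k_0,\dots,V^k_{d_k}\}$ of $U_k$ such that $f_{\vert V^k_i} \simeq g_{\vert V^k_i}$ for every $i = 0,\dots,d_k$. The key point is that each $V^k_i$ is open in $U_k$, hence open in $X$ since $U_k$ is open in $X$; and the restriction of $f$ (resp. $g$) to $V^k_i$ agrees with the restriction of $f_{\vert U_k}$ to $V^k_i$, so $f$ and $g$ are homotopic on each $V^k_i$.

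Next I would assemble a single cover of $X$ indexed by pairs. The collection $\{V^k_i : 0 \le k \le n,\ 0 \le i \le d_k\}$ is an open cover of $X$, since the $U_k$ cover $X$ and each $V^k_i$ family covers $U_k$. To turn this into a cover with the right number of sets, I would regroup: for each $i$ in the range $0 \le i \le \max_k d_k$, set $W_i := \bigcup_{k : d_k \ge i} V^k_i$, an open subset of $X$. On $W_i$, however, $f$ and $g$ need not be homotopic, because $W_i$ is a disjoint-in-spirit but not literally disjoint union. The cleaner route is instead to index by the pair $(k,i)$ directly and simply count: the total number of sets in $\{V^k_i\}$ is $\sum_{k=0}^n (d_k + 1) = \sum_{k=0}^n d_k + (n+1)$. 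Hence this is an open cover of $X$ by $\sum_{k=0}^n d_k + (n+1)$ sets, each of which supports a homotopy from $f$ to $g$, and therefore $\D(f,g) \le \sum_{k=0}^n d_k + (n+1) - 1 = \sum_{k=0}^n \D(f_{\vert U_k}, g_{\vert U_k}) + n$, as claimed.

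The main subtlety — really the only place care is needed — is the bookkeeping with the ``$+1$'' conventions: $\D(f,g) \le m$ means a cover by $m+1$ sets exists, so a cover of $X$ by $N$ sets each admitting the homotopy gives $\D(f,g) \le N - 1$. Matching $N = \sum_{k=0}^n (d_k+1)$ against the stated bound is then a one-line arithmetic check. One should also note explicitly that openness is preserved (open in an open subspace is open in the whole space) and that restricting $f_{\vert U_k}$ further to $V^k_i$ is literally $f_{\vert V^k_i}$, so the homotopies provided by the definition are exactly the homotopies needed. No regrouping or disjointness argument is actually required; the naive union of the refined covers, kept indexed by pairs, already has the correct cardinality.
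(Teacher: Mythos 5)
Your argument is correct and is exactly the standard refinement-and-counting proof that the paper leaves implicit (no proof is printed for this proposition): refine each $U_k$ by a cover of $d_k+1$ open sets witnessing $\D(f_{\vert U_k},g_{\vert U_k})$, observe these are open in $X$ and carry the required homotopies, and count $\sum_{k=0}^n(d_k+1)=\sum_k d_k+n+1$ sets. The brief detour about regrouping into sets $W_i$ is unnecessary, as you yourself conclude, but it does not affect the validity of the proof.
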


\begin{example}Let $X=S^1$ be the Lie group of unit complex numbers. The distance between the identity $z$ and the inversion $1/z$ is $1$.  Let $X= S^2$ be the unit sphere. The distance between the identity and the antipodal map is $1$ (see Corollary \ref{CATDOM}).\end{example}

\begin{example}Let $G$ be the unitary group $U(2)$. The distance between the identity $\id_G$ and the inversion $I(A)=A^*$ is $\D(\id_G,I)=2$ (see Equation \eqref{U2}).
\end{example}

The two key examples of homotopic distance are Lusternik-Schni\-rel\-mann category  \cite{CORNEA}  and Farber's topological complexity \cite{FARBER}, as we shall show in the next paragraphs. 
\subsection{Lusternik-Schnirelmann category}
Assume the space $X$ to be path-connected. An open set $U\subset X$ is {\em categorical in $X$} if the inclusion is null-homotopic. The (normalized) LS-category $\cat(X)$ is the least integer $n\geq 0$ such that $X$ can be covered by $n+1$ categorical open sets. Then, $\cat(X)$ is the homotopic distance between the identity $\id_X$   and any constant map, that is, $\cat(X)=\D(\id_X,*)$.

More generally, the Lusternik-Schnirelmann category of the map $f\colon X \to Y$ \cite[Exercise 1.16, p.~43]{CORNEA}  is the distance between $f$  and any constant map, $\cat(f)=\D(f,*)$, when $Y$ is path-connected. For instance, the category of the diagonal $\Delta_X\colon X \to X \times X$ equals $\cat(X)$. 

Given a base point $x_0\in X$ we define the inclusion maps $i_1,i_2\colon X\to X\times X$ as $i_1(x)=(x,x_0)$ and $i_2(x)=(x_0,x)$. 

\begin{proposition}\label{INCLCAT}
	The homotopic distance between $i_1$ and $i_2$ equals the LS-category of $X$, that is, $\D(i_1,i_2)=\cat(X)$.
\end{proposition}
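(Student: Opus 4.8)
The plan is to show the two inequalities $\D(i_1,i_2)\leq\cat(X)$ and $\cat(X)\leq\D(i_1,i_2)$ by translating open sets back and forth between the two situations, using the fact that $i_1$ and $i_2$ are themselves homotopic to the diagonal $\Delta_X$ followed by (or compared via) the axis projections, and that $\cat(X)=\D(\id_X,*)$.

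For $\D(i_1,i_2)\leq\cat(X)$, suppose $\{U_0,\dots,U_n\}$ is a categorical open cover of $X$, so each inclusion $U_j\hookrightarrow X$ is null-homotopic, say via a homotopy $H^j$ from the inclusion to the constant map at $x_0$. On each $U_j$ I would build a homotopy between $(i_1)_{\vert U_j}=(x\mapsto(x,x_0))$ and $(i_2)_{\vert U_j}=(x\mapsto(x_0,x))$ by using $H^j$ in the first coordinate to slide $x$ down to $x_0$ while simultaneously using $H^j$ (run in reverse) in the second coordinate to slide $x_0$ up to $x$; concretely the homotopy $(x,t)\mapsto(H^j(x,t),H^j(x,1-t))$ connects $(x,x_0)$ at $t=0$ to $(x_0,x)$ at $t=1$. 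Hence each restriction $(i_1)_{\vert U_j}\simeq(i_2)_{\vert U_j}$, giving $\D(i_1,i_2)\leq n$.

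For the reverse inequality, suppose $\{U_0,\dots,U_n\}$ is an open cover of $X$ with $(i_1)_{\vert U_j}\simeq(i_2)_{\vert U_j}$ for each $j$. Composing these restrictions with the second-coordinate projection $p_2\colon X\times X\to X$, I get $p_2\circ(i_1)_{\vert U_j}\simeq p_2\circ(i_2)_{\vert U_j}$; but $p_2\circ i_1$ is the constant map at $x_0$ and $p_2\circ i_2$ is $\id_X$, so the inclusion $U_j\hookrightarrow X$ is homotopic to a constant, i.e. $U_j$ is categorical. Therefore $\cat(X)\leq n$. Combining the two bounds gives $\D(i_1,i_2)=\cat(X)$. (One could also phrase both directions at once by noting $i_2$ is homotopic to $\id_X$ after projection and $i_1$ to $*$, together with the behaviour of $\D$ under composition with the projection, but the explicit homotopy above keeps the argument self-contained.)

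The only mild subtlety — not really an obstacle — is that in the first direction the naive choice of homotopy (slide only the first coordinate, or only the second) does not directly connect $(x,x_0)$ to $(x_0,x)$; one genuinely needs to move both coordinates, which is why the homotopy must use $H^j$ in both slots, in opposite time directions. Everything else is a routine check that the pieced-together maps are continuous and have the claimed endpoints, and that path-connectedness of $X$ is used (implicitly) only to make "categorical" and "null-homotopic to the chosen basepoint" coincide.
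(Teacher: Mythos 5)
Your proof is correct and follows essentially the same route as the paper: a categorical cover yields homotopies $(i_1)_{\vert U_j}\simeq(i_2)_{\vert U_j}$ in one direction, and composing with an axis projection recovers categoricity in the other. The only (cosmetic) differences are that your homotopy $(x,t)\mapsto\bigl(H^j(x,t),H^j(x,1-t)\bigr)$ moves both coordinates simultaneously where the paper concatenates two half-time homotopies, and you project onto the second factor where the paper projects onto the first; both variants are equally valid.
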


\begin{proof}
	First, we show that $\D(i_1,i_2)\leq \cat(X)$. If an open subset $U\subset X$ is categorical, let  $\mathcal{F}\colon U\times [0,1]\to X$ be the homotopy between the inclusion and the constant map to $x_0\in X$, i.e. $\mathcal{F}(x,0)=x$ and $\mathcal{F}(x,1)=x_0$. We define a homotopy $\mathcal{H}\colon U\times [0,1] \to X\times X$ between $(i_1)_{\vert U}$ and $(i_2)_{\vert U}$ as
	$$\mathcal{H}(x,t)=
	\left\{	
	\begin{array}{lr}
	(\mathcal{F}(x,2t),x_0) & \mathrm{if\ }\  0\leq t\leq 1/2,\\
	(x_0,\mathcal{F}(x,2-2t))  & \mathrm{if\ }\ 1/2\leq t \leq 1.
	\end{array}
	\right.$$
	Second, we show that $\cat(X)\leq \D(i_1,i_2)$. Assume that there is a homotopy $\mathcal{H}\colon U\times [0,1]\to X \times X$ between $(i_1)_{\vert U}$ and $(i_2)_{\vert U}$, i.e. $\mathcal{H}(x,0)=(x,x_0)$ and $\mathcal{H}(x,1)=(x_0,x)$. Let  $p_1\circ \mathcal{F}$ be the first component of $\mathcal{F}$. Then $p_1\circ \mathcal{F}$  is a homotopy between the inclusion $U \subset X$ and the constant map $x_0$.  
\end{proof}
\subsection{Topological complexity}
Let $\pi\colon PX \to X\times X$ be the path fibration sending each continuous path $\gamma\colon [0,1]\to X$ on $X$ to its initial and final points, $\pi(\gamma)=\big(\gamma(0),\gamma(1)\big)$. By definition, the (normalized) topological complexity $\TC(X)$ of $X$ is the least integer $n$ such that $X\times X$ can be covered by $n+1$ open subsets $U_j$ where the fibration $\pi$  admits a continuous local section.

\begin{proposition}\label{PROJECT}The topological complexity of $X$ equals the homotopic distance between the two projections $p_1,p_2\colon X\times X \to X$, that is,
	$\TC(X)=\D(p_1,p_2)$.
\end{proposition}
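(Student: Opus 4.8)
The plan is to show the two inequalities $\D(p_1,p_2)\leq \TC(X)$ and $\TC(X)\leq \D(p_1,p_2)$ separately, by translating a local section of the path fibration $\pi$ into a homotopy between the restricted projections, and conversely. The underlying observation is that a continuous map $s\colon U\to PX$ with $\pi\circ s=\id_U$ assigns to each pair $(x,y)\in U$ a path from $x$ to $y$, and the assignment $(x,y,t)\mapsto s(x,y)(t)$ is exactly a homotopy from $(p_1)_{\vert U}$ to $(p_2)_{\vert U}$; conversely a homotopy between the two projections on $U$ yields such a path-valued section.

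For the first inequality, suppose $\TC(X)\leq n$ and let $\{U_0,\dots,U_n\}$ be an open cover of $X\times X$ together with local sections $s_j\colon U_j\to PX$ of $\pi$. Define $\mathcal{H}_j\colon U_j\times[0,1]\to X$ by $\mathcal{H}_j(x,y,t)=s_j(x,y)(t)$. This is continuous because the evaluation map $PX\times[0,1]\to X$, $(\gamma,t)\mapsto\gamma(t)$, is continuous (this uses that $PX$ carries the compact-open topology and $[0,1]$ is locally compact Hausdorff, so the exponential correspondence applies). Since $\pi\circ s_j=\id_{U_j}$, we get $\mathcal{H}_j(x,y,0)=s_j(x,y)(0)=x=(p_1)_{\vert U_j}(x,y)$ and $\mathcal{H}_j(x,y,1)=s_j(x,y)(1)=y=(p_2)_{\vert U_j}(x,y)$, so $\mathcal{H}_j$ is a homotopy from $(p_1)_{\vert U_j}$ to $(p_2)_{\vert U_j}$. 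Hence $\D(p_1,p_2)\leq n$.

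For the converse, suppose $\D(p_1,p_2)\leq n$ and let $\{U_0,\dots,U_n\}$ be an open cover of $X\times X$ with homotopies $\mathcal{H}_j\colon U_j\times[0,1]\to X$ satisfying $\mathcal{H}_j(x,y,0)=x$ and $\mathcal{H}_j(x,y,1)=y$. By the exponential law, $\mathcal{H}_j$ corresponds to a continuous map $s_j\colon U_j\to PX$ defined by $s_j(x,y)(t)=\mathcal{H}_j(x,y,t)$; the boundary conditions give $\pi(s_j(x,y))=(s_j(x,y)(0),s_j(x,y)(1))=(x,y)$, so $s_j$ is a continuous local section of $\pi$ over $U_j$. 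Therefore $X\times X$ is covered by $n+1$ open sets admitting local sections, and $\TC(X)\leq n$. Combining both inequalities gives $\TC(X)=\D(p_1,p_2)$.

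The only delicate point is the adjunction between homotopies $U_j\times[0,1]\to X$ and maps $U_j\to PX$; this is the standard exponential correspondence for mapping spaces with the compact-open topology, valid here since $[0,1]$ is compact Hausdorff (so in particular locally compact), and it is precisely the same identification used to describe $\pi$ as a fibration in the first place. Everything else is a direct substitution, so there is no real obstacle beyond stating this adjunction carefully.
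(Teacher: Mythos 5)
Your proof is correct and is essentially the paper's argument: the paper proves Proposition \ref{PROJECT} as the special case $f=p_1$, $g=p_2$ of Theorem \ref{DISTSECAT}, whose proof is exactly your translation between local sections of the path fibration and homotopies between the restricted maps. Your version just instantiates that general statement directly (and is more careful than the paper about the exponential correspondence justifying continuity).
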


This result will be a consequence of Theorem \ref{DISTSECAT} below.

\subsection{\v{S}varc genus}
Both $\cat(X)$ and $\TC(X)$ are particular cases of the \v{S}varc  genus (also called sectional category) of some fibrations. Explicitly \cite{CORNEA}
the {\em \v{S}varc genus} $\secat(\pi)$ of a fibration $\pi\colon E \to B$  is the minimum integer  $n\geq 0$ such that the base $B$ can be covered by open sets $V_0,\dots,V_n$ with the property that over each $V_j$ there exists a local section $s$ of $\pi$. For instance, $\cat(X)$ is the \v{S}varc genus of the fibration $\pi_0\colon P_0X \to X$ sending each path $\gamma$ with initial point $x_0$ into the end point $\gamma(1)$.

What follows is an interpetation of the homotopic distance in terms of the \v{S}varc genus.

\begin{theorem}\label{DISTSECAT} Let $f,g\colon X \to Y$ be two maps, and consider the pull-back $q\colon P\to X$ of the path fibration $\pi\colon PY \to Y\times Y$ by the map $(f,g)\colon X \to Y\times Y$: 
	$$\begin{tikzcd}
	P \arrow[d,"q"]\arrow[r] & PY\arrow[d,"{\pi}"]\\
	X \arrow[r,"{(f,g)}"] & {Y\times Y}
	\end{tikzcd}$$
	Then $D(f,g)=\secat(q)$. 
\end{theorem}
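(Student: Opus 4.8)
The plan is to prove the two inequalities $\D(f,g)\leq\secat(q)$ and $\secat(q)\leq\D(f,g)$ by translating each side into the existence of the same kind of local data on an open set $U\subseteq X$. The crux is the elementary observation: for an open set $U\subseteq X$, the restrictions $f|_U$ and $g|_U$ are homotopic \emph{if and only if} the pulled-back fibration $q\colon P\to X$ admits a continuous section over $U$. Granting this equivalence, an open cover $\{U_0,\dots,U_n\}$ of $X$ on each member of which $f$ and $g$ agree up to homotopy is literally the same thing as an open cover witnessing $\secat(q)\leq n$, so the two numbers coincide.

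First I would unwind the pullback. Since $\pi\colon PY\to Y\times Y$ sends a path $\gamma$ to $(\gamma(0),\gamma(1))$, the total space of the pullback is $P=\{(x,\gamma)\in X\times PY : \gamma(0)=f(x),\ \gamma(1)=g(x)\}$, with $q(x,\gamma)=x$. Thus a continuous section of $q$ over $U$ is exactly a continuous choice, for each $x\in U$, of a path in $Y$ from $f(x)$ to $g(x)$ depending continuously on $x$; and such a continuous family of paths is precisely a homotopy $H\colon U\times[0,1]\to Y$ with $H(\cdot,0)=f|_U$ and $H(\cdot,1)=g|_U$ (using the standard adjunction $\mathrm{Map}(U,PY)\cong\mathrm{Map}(U\times[0,1],Y)$, which is where I should be slightly careful about the compact-open topology and exponential law, i.e. assume $U$ locally compact Hausdorff or simply work with the path space and the explicit bijection). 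This establishes the equivalence stated above.

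From here both inequalities are immediate. If $\secat(q)=n$, pick $V_0,\dots,V_n$ covering $X$ with a section of $q$ over each $V_j$; the equivalence gives $f|_{V_j}\simeq g|_{V_j}$, so $\{V_j\}$ witnesses $\D(f,g)\leq n$. Conversely, if $\D(f,g)=n$ with open cover $\{U_0,\dots,U_n\}$ and homotopies $f|_{U_j}\simeq g|_{U_j}$, the equivalence converts each homotopy into a section of $q$ over $U_j$, so $\secat(q)\leq n$. Combining, $\D(f,g)=\secat(q)$.

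I expect the only real subtlety to be the adjunction/continuity step: verifying that a map $U\to PY$ into the path space (with the compact-open topology) corresponds to a genuine homotopy $U\times[0,1]\to Y$, and that this correspondence is a bijection respecting continuity in both directions. For the $\secat(q)\leq\D(f,g)$ direction one also needs that the resulting map $x\mapsto H(x,-)$ lands in $P$ (i.e.\ is continuous into $X\times PY$ with the pullback topology), which again is the exponential law plus continuity of $x\mapsto(f(x),g(x))$. Once this point-set bookkeeping is in place, the theorem follows with no further work, and in particular Proposition \ref{PROJECT} drops out by taking $f=p_1$, $g=p_2\colon X\times X\to X$, since then $(f,g)=\id_{X\times X}$ and $q$ is the path fibration $\pi$ itself, whose \v{S}varc genus is $\TC(X)$ by definition.
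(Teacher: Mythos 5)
Your proof is correct and follows essentially the same route as the paper's: identify $P$ with the space of pairs $(x,\gamma)$ with $\gamma(0)=f(x)$, $\gamma(1)=g(x)$, and observe that sections of $q$ over $U$ correspond exactly to homotopies $f|_U\simeq g|_U$ via the exponential law. (Your worry about $U$ being locally compact is unnecessary: the adjunction $\mathrm{Map}(U,Y^{[0,1]})\cong\mathrm{Map}(U\times[0,1],Y)$ holds for arbitrary $U$ because it is the interval $[0,1]$ that needs to be locally compact Hausdorff.)
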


\begin{proof}The elements of $P$ are the pairs $(x,\gamma)$ where $x\in X$ and $\gamma$ is a path on $Y$ with $\gamma(0)=f(x)$ and $\gamma(1)=g(x)$. The map $q$ is the projection onto the first factor. Then, if $U\subset X$ is an open set where there exists a homotopy $\mathcal{H}\colon U\times I \to Y$ between $f_{\vert U}$ and $g_{\vert U}$, we can define a section $s\colon X \to P$ as 
	$s(x)=\big(x,\mathcal{H}(x,-)\big)$.
	Then $\secat(q)\leq D(f,g)$.
	
	Conversely, if there is a map $s\colon U \to P$ such that $q\circ s$ is the inclusion $i_U\colon U \subset X$, we have
	$s(x)=(x,\gamma_x)$ for some path $\gamma_x$ from $f(x)$ to $g(x)$. Then,   the homotopy 
	$\mathcal{H}(x,t)=\gamma_x(t)$ proves that $f\simeq g$ on $U$.
\end{proof}

As a consequence,  if we take $f=p_1$ and $g=p_2$ to be the projections from $X\times X\to X$ we have $(f,g)=\id_{X\times X}$ and $q=\pi$, thus proving Proposition \ref{PROJECT}, that is,
$$D(p_1,p_2)=\secat(\pi)=\TC(X).$$

\section{Properties}\label{PROPERTIES}
\subsection{Compositions} We now prove several elementary properties, starting with the behaviour of the homotopic distance under compositions. Several known properties of $\cat$ and $\TC$ can be deduced from our general results.

\begin{proposition}\label{IZQ}Suppose we have maps $f,g\colon X \to Y$ and $h\colon Y\to Z$. Then 
	$$\D(h\circ f,h\circ g)\leq \D(f,g).$$
\end{proposition}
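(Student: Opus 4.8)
The plan is to show that an open covering witnessing $\D(f,g)\leq n$ also witnesses $\D(h\circ f, h\circ g)\leq n$, which immediately gives the inequality. The key observation is that composition with a fixed map preserves homotopies: if $F\colon U\times I\to Y$ is a homotopy between $f_{|U}$ and $g_{|U}$, then $h\circ F\colon U\times I\to Z$ is a homotopy between $(h\circ f)_{|U}=h\circ f_{|U}$ and $(h\circ g)_{|U}=h\circ g_{|U}$.

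Concretely, I would argue as follows. If $\D(f,g)=\infty$ there is nothing to prove, so assume $\D(f,g)=n<\infty$ and pick an open covering $\{U_0,\dots,U_n\}$ of $X$ such that $f_{|U_j}\simeq g_{|U_j}$ for every $j$. Fix $j$ and let $F_j\colon U_j\times I\to Y$ be a homotopy with $F_j(-,0)=f_{|U_j}$ and $F_j(-,1)=g_{|U_j}$. Then $h\circ F_j$ is continuous (as a composite of continuous maps), and $h\circ F_j(-,0)=h\circ f_{|U_j}=(h\circ f)_{|U_j}$ while $h\circ F_j(-,1)=(h\circ g)_{|U_j}$. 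Hence $(h\circ f)_{|U_j}\simeq (h\circ g)_{|U_j}$ on the same open set $U_j$. Since this holds for all $j$ and $\{U_0,\dots,U_n\}$ still covers $X$, the definition of homotopic distance yields $\D(h\circ f,h\circ g)\leq n=\D(f,g)$.

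There is essentially no obstacle here; the statement is purely formal. The only point worth a moment's care is the harmless identification $(h\circ f)_{|U}=h\circ(f_{|U})$, i.e.\ that restricting then post-composing is the same as post-composing then restricting, which is immediate from the definitions. One could alternatively phrase the whole argument through Theorem~\ref{DISTSECAT}, but the direct covering argument is shorter and self-contained, so that is the route I would take.
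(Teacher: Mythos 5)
Your proof is correct and follows exactly the paper's argument: take a covering witnessing $\D(f,g)\leq n$ and post-compose each homotopy with $h$ to see that the same covering witnesses $\D(h\circ f,h\circ g)\leq n$. You have merely spelled out the details (the explicit homotopy $h\circ F_j$ and the identification $(h\circ f)_{|U}=h\circ(f_{|U})$) that the paper leaves implicit.
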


\begin{proof} Let $\D(f,g)\leq n$ and let $X=U_0\cup\dots\cup U_n$ be an open covering with $f_j=f_{\vert U_j}$ homotopic to $g_j=g_{\vert U_j}$. Then $\D(hf,h g)\leq n$ beause
	$$(h\circ f)_j=h\circ f_j\simeq h\circ g_j=(h\circ g)_j.$$ 
\end{proof}

\begin{corollary}\label{cor_d_upper_bound_domain}
	Let $f\colon X \to Y$ be a map with path-connected domain $X$. Then $\cat(f)\leq \cat(X)$. 
\end{corollary}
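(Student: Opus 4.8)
The plan is to deduce Corollary~\ref{cor_d_upper_bound_domain} directly from Proposition~\ref{IZQ} together with the definitions of $\cat(f)$ and $\cat(X)$ recalled just before Proposition~\ref{INCLCAT}. Recall that, since $X$ is path-connected, $\cat(X)=\D(\id_X,*)$, where $*$ denotes a constant map $X\to X$, and that since $Y$ is path-connected (here I would note that the image $f(X)$ lies in a single path component of $Y$, so we may take the constant map to be the value of $f$ at a base point, and any two constant maps to $Y$ agree up to homotopy), $\cat(f)=\D(f,*)$.

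The key step is then a one-line application of functoriality: write $f=f\circ \id_X$ and observe that a constant map $X\to Y$ factors as $f\circ(*)$, where $*\colon X\to X$ is a constant map. Applying Proposition~\ref{IZQ} with the composable maps $\id_X,*\colon X\to X$ and $h=f\colon X\to Y$ gives
$$\cat(f)=\D(f\circ\id_X,\,f\circ *)\leq \D(\id_X,*)=\cat(X),$$
which is exactly the claim. One should be slightly careful that $f\circ *$ is indeed a constant map to $Y$ and that it represents the constant used in the definition of $\cat(f)$, but this is immediate because $Y$ is path-connected.

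I do not expect any genuine obstacle here; the only point requiring a word of justification is the identification of the constant maps, i.e. that $\cat(f)=\D(f,*)$ is independent of which constant map $*\colon X\to Y$ one picks, which follows from path-connectedness of $Y$ (all constant maps into a path-connected space are homotopic, and hence $\D(f,-)$ takes the same value on all of them by Proposition~\ref{HOMOT}). Thus the proof is essentially a formal consequence of Proposition~\ref{IZQ}, and I would present it in just two or three sentences.
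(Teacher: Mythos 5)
Your proof is correct and is essentially identical to the paper's: both apply Proposition~\ref{IZQ} with $h=f$ to the pair $\id_X$, $*\colon X\to X$, obtaining $\cat(f)=\D(f\circ\id_X,f\circ *)\leq\D(\id_X,*)=\cat(X)$. Your extra remark that the constant map defining $\cat(f)$ should be taken in the path component containing $f(X)$ (here $f(x_0)$, as in the paper) is a reasonable clarification but not a departure from the paper's argument.
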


\begin {proof}Take $\id_X$ and a constant map $x_0$. Then
$\D(f\circ\id_X,f(x_0))\leq \D(\id_X,x_0)$.
\end{proof}

\begin{proposition}\label{DER}Suppose we have maps $f,g\colon X \to Y$ and $h\colon Z\to X$.  Then 
$$\D(f\circ h,g\circ h)\leq \D(f,g).$$
\end{proposition}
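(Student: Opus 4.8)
The plan is to mimic the proof of Proposition~\ref{IZQ}, but now pulling back the open covering along $h$ instead of pushing the homotopies forward along $h$. The key point is that precomposition is compatible both with taking restrictions to subsets and with homotopies.

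First I would set $\D(f,g)=n$ and choose an open covering $\{U_0,\dots,U_n\}$ of $X$ with $f_{\vert U_j}\simeq g_{\vert U_j}$ for each $j$. Then I would put $V_j=h^{-1}(U_j)$; since $h$ is continuous, each $V_j$ is open, and the $V_j$ cover $Z$ because the $U_j$ cover $X$. Next I would observe that $h$ restricts to a map $h_j\colon V_j\to U_j$, and that $(f\circ h)_{\vert V_j}=f_{\vert U_j}\circ h_j$, and similarly for $g$. Finally, if $\mathcal{H}_j\colon U_j\times I\to Y$ is a homotopy from $f_{\vert U_j}$ to $g_{\vert U_j}$, then $\mathcal{H}_j\circ(h_j\times\id_I)\colon V_j\times I\to Y$ is a homotopy from $(f\circ h)_{\vert V_j}$ to $(g\circ h)_{\vert V_j}$. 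Hence $\{V_0,\dots,V_n\}$ witnesses $\D(f\circ h,g\circ h)\leq n$.

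There is no serious obstacle here; the only thing to be slightly careful about is that $\{V_0,\dots,V_n\}$ genuinely covers $Z$ (it does, since $h$ maps $Z$ into $X=\bigcup_j U_j$), and that the index set has the same size $n+1$, so the bound comes out as $\D(f,g)$ rather than something larger. One could alternatively phrase the whole argument via Theorem~\ref{DISTSECAT}, by comparing the pull-back of the path fibration along $(f,g)$ with its pull-back along $(f\circ h,g\circ h)=(f,g)\circ h$ and using that \v{S}varc genus does not increase under pull-back, but the direct covering argument above is shorter and self-contained.
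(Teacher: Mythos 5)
Your argument is correct and is essentially the paper's own proof: both pull back the open covering along $h$, restrict $h$ to maps $V_j=h^{-1}(U_j)\to U_j$, and compose the given homotopies with these restrictions. The paper phrases the last step slightly more abstractly (homotopic maps remain homotopic after precomposition), but the content is identical.
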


\begin{proof} Let $\D(f,g)\leq n$ and let $X=U_0\cup\dots\cup U_n$ be an open covering with $f_j\simeq g_j\colon U_j \to Y$. Let $V_j=h^{-1}(U_j)\subset Z$. The restriction $h_j\colon V_j \to X$ can be written as the composition of a map  $\bar h_j\colon V_j\to U_j$, where $\bar h_j(x)=h(x)$, and the inclusion $I_j\colon U_j\subset X$. Then we have that 
$$(f\circ h)_j=f_j\circ \bar h_j \simeq g_j\circ \bar h_j=g\circ I_j\circ \bar h_j=g\circ h_j=(g\circ h)_j,$$
hence $D(fh,gh)\leq n$.
\end{proof}

\begin{corollary}If $f\colon X \to Y$ is a continuous map with a path-connected codomain $Y$, then $\cat(f)\leq \cat(Y)$.
\end{corollary}

\begin{proof}Take $\id_Y$ and a constant map $y_0$.
Then
$D(\id_Y\circ f, y_0\circ f)\leq D(\id_Y,y_0)$.
\end{proof}

The latter result result can be extended. 

\begin{corollary}\label{cor:d_upper_bound_codomain}
Let $f,g\colon X \to Y$ be continuous maps with a path-connected codomain $Y$. Then 
$$\D(f,g)+1\leq (\cat (f)+1)(\cat(g)+1).$$
\end{corollary}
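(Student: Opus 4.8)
The plan is to combine the sub-additivity property (Proposition \ref{lema:subadditivity}) with the two composition inequalities (Propositions \ref{IZQ} and \ref{DER}), exactly in the spirit of the classical proof that $\secat$ or $\cat$ is sub-multiplicative. Write $\cat(f)=m$ and $\cat(g)=n$, so that $X$ admits an open covering $\{U_0,\dots,U_m\}$ with each $f_{\vert U_i}$ null-homotopic (say homotopic to the constant $y_0$, using path-connectedness of $Y$ to make all constants agree up to homotopy), and an open covering $\{V_0,\dots,V_n\}$ with each $g_{\vert V_j}$ null-homotopic to the same $y_0$. Consider the $(m+1)(n+1)$ open sets $W_{ij}=U_i\cap V_j$, which form an open covering of $X$.

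The key step is to estimate $\D(f_{\vert W_{ij}},g_{\vert W_{ij}})$. First I would observe that $f$ restricted to $W_{ij}\subset U_i$ factors through $f_{\vert U_i}$, which is homotopic to the constant $y_0$; hence $f_{\vert W_{ij}}\simeq y_0$. Similarly $g_{\vert W_{ij}}\simeq y_0$. Therefore $f_{\vert W_{ij}}\simeq g_{\vert W_{ij}}$, i.e. $\D(f_{\vert W_{ij}},g_{\vert W_{ij}})=0$ for every pair $(i,j)$. Now apply Proposition \ref{lema:subadditivity} to the covering $\{W_{ij}\}$, which has $(m+1)(n+1)$ members, hence is indexed by $0,\dots,N$ with $N=(m+1)(n+1)-1$:
$$\D(f,g)\leq \sum_{(i,j)}\D(f_{\vert W_{ij}},g_{\vert W_{ij}})+N = 0 + (m+1)(n+1)-1.$$
Adding $1$ to both sides gives $\D(f,g)+1\leq (\cat(f)+1)(\cat(g)+1)$, as desired.

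The only subtlety — and the main thing to get right — is the appeal to path-connectedness of $Y$: the definitions only guarantee $f_{\vert U_i}$ is homotopic to \emph{some} constant map and $g_{\vert V_j}$ to \emph{some} (a priori different) constant map, so I need $Y$ path-connected to conclude that any two constant maps $X\to Y$ are homotopic, and thus that $f_{\vert W_{ij}}$ and $g_{\vert W_{ij}}$ are homotopic to each other on the overlap. Alternatively, and perhaps more cleanly, one can avoid mentioning constants altogether: since $f_{\vert U_i}$ is null-homotopic it is in particular homotopic to $g_{\vert U_i}$ composed with... no — the crispest route really is via constants, so I would just invoke that $Y$ path-connected makes all constant maps homotopic. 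Everything else is a direct substitution into the two lemmas already proved, so there is no real obstacle beyond bookkeeping the index set of size $(m+1)(n+1)$.
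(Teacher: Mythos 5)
Your proof is correct and follows essentially the same route as the paper: both form the refined covering $W_{ij}=U_i\cap V_j$ from categorical-type coverings for $f$ and $g$, observe $f\simeq y_0\simeq g$ on each $W_{ij}$ (using path-connectedness of $Y$ to reconcile the constants), and count the $(m+1)(n+1)$ sets. Your detour through Proposition \ref{lema:subadditivity} is harmless but unnecessary, since a covering on which all the restricted distances vanish already witnesses the bound directly.
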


\begin{proof}
Denote by $y_0$ a constant map from $X$ to $Y$.  Assume that $\cat(f)=\D(f,y_0)\leq m$, $\cat(g)=\D(g,y_0)\leq n$ and let $\{U_i\}_{i=0}^m$, $\{V_j\}_{j=0}^n$ be the corresponding coverings of $X$. The open sets $W_{i,j}=U_i\cap V_j$  cover $X$.   Moreover, $f \simeq y_0\simeq g$ on $W_{i,j}$, so $D(f,g)\leq m\times n$. 
\end{proof}

The latter result will be greatly improved for normal spaces (see the remark after Proposition \ref{TRIANG}).

\begin{corollary}\label{EJY0}Let $f,g\colon X \to Y$. Let $y_0\in Y$ be a point, $X_0=g^{-1}(y_0)$ the fiber and $f_0=f_{\vert X_0}$ the restriction. Then $ \D(f_0,y_0)\leq \D(f,g)$.\end{corollary}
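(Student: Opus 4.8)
The plan is to use Proposition \ref{DER} to restrict to the fiber, and then observe that on the fiber $g$ becomes a constant map. First I would let $j\colon X_0\hookrightarrow X$ denote the inclusion of the fiber $X_0=g^{-1}(y_0)$. Applying Proposition \ref{DER} with $h=j$ gives
$$\D(f\circ j,\, g\circ j)\leq \D(f,g).$$
Now $f\circ j=f_{\vert X_0}=f_0$ by definition, while $g\circ j$ is the restriction $g_{\vert X_0}$, which is the \emph{constant} map with value $y_0$ because every point of $X_0$ is mapped by $g$ to $y_0$. Hence $\D(f_0,y_0)=\D(f\circ j,g\circ j)\leq \D(f,g)$, which is exactly the claim.

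The argument is essentially immediate once one notices that the hypothesis "$X_0=g^{-1}(y_0)$'' is precisely what makes $g_{\vert X_0}$ constant, so there is no real obstacle here; the only point requiring a word of care is that $\D(f_0,y_0)$ on the left-hand side refers to the constant map $X_0\to Y$ at $y_0$, which is indeed what $g\circ j$ is. I would also note that this corollary specializes, taking $g=\id_X$ and $y_0=x_0$ a point (so $X_0=\{x_0\}$), to a triviality, but taking instead $f=\id_X$ it recovers the kind of "fiber vs.\ total space'' comparison that foreshadows the fibration results of Section \ref{FIBRATIONS}; this placement suggests the corollary is meant as a lemma feeding into Theorem \ref{FIBRATION}, though for the statement as given the one-line derivation above suffices.
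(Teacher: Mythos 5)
Your proof is correct and is exactly the intended derivation: the paper states this as an immediate corollary of Proposition \ref{DER} (applied to the inclusion $X_0\hookrightarrow X$, under which $g$ restricts to the constant map $y_0$) and gives no further argument. Your reading of the left-hand side as the distance from $f_0$ to the constant map at $y_0$ is also the correct one, as confirmed by the way the corollary is used in the proof of Corollary \ref{TCCAT1}.
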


\begin{corollary}[\cite{FARBER}\label{TCCAT1}] $\cat(X)\leq \TC(X)$.
\end{corollary}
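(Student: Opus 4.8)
The plan is to derive $\cat(X)\le\TC(X)$ as a direct corollary of the composition results just proved, using the two descriptions of $\cat$ and $\TC$ as homotopic distances. Recall from Proposition~\ref{PROJECT} that $\TC(X)=\D(p_1,p_2)$ where $p_1,p_2\colon X\times X\to X$ are the projections, and recall that $\cat(X)=\D(\id_X,*)$.

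First I would use Proposition~\ref{DER} (precomposition decreases distance) with the diagonal map $\Delta_X\colon X\to X\times X$, $\Delta_X(x)=(x,x)$. Precomposing the two projections with $\Delta_X$ gives $p_1\circ\Delta_X=\id_X=p_2\circ\Delta_X$. That is not directly useful since both composites are equal; instead the right move is to precompose $p_1$ and $p_2$ with one of the axis inclusions, or to use Proposition~\ref{INCLCAT}. Concretely, by Proposition~\ref{INCLCAT} we have $\cat(X)=\D(i_1,i_2)$ with $i_1,i_2\colon X\to X\times X$ the axis inclusions, $i_1(x)=(x,x_0)$, $i_2(x)=(x_0,x)$. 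Now I would like to realize $i_1,i_2$ as $p_1,p_2$ composed with something; but the composition goes the wrong way. The clean approach is rather: apply Proposition~\ref{DER} to $p_1,p_2\colon X\times X\to X$ precomposed with a section-like map. Here is the cleanest route. Consider the map $h\colon X\to X\times X$ given by $h=(\,*\,,\id_X)$, i.e. $h(x)=(x_0,x)=i_2(x)$. Then $p_1\circ h$ is the constant map $x_0$ and $p_2\circ h=\id_X$. By Proposition~\ref{DER},
$$\D(\id_X,\,*)=\D(p_2\circ h,\,p_1\circ h)\le\D(p_1,p_2).$$
The left side is $\cat(X)$ and the right side is $\TC(X)$ by Proposition~\ref{PROJECT}, so $\cat(X)\le\TC(X)$.

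The main step is just choosing the right precomposition map $h$ so that one composite becomes the identity and the other becomes constant; everything else is an immediate invocation of Proposition~\ref{DER} (together with Proposition~\ref{PROJECT} to identify $\TC$). I do not anticipate a genuine obstacle here — the only subtlety is remembering that the inequality in Proposition~\ref{DER} runs in the direction $\D(f\circ h,g\circ h)\le\D(f,g)$, so $h$ must be chosen as a map \emph{into} the source $X\times X$ of the projections, namely an inclusion of a slice. One should note in passing that path-connectedness of $X$ is implicit (so that "the" constant map is well-defined up to homotopy and $\cat(X)$ makes sense), which matches the standing hypotheses in the subsection on LS-category. Alternatively, one can phrase the same argument symmetrically using $h=(\id_X,*)=i_1$, which gives $p_1\circ h=\id_X$ and $p_2\circ h=x_0$, yielding the same conclusion.
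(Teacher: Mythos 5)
Your argument is correct and coincides with the paper's second proof of this corollary: precompose $p_1,p_2$ with the axis inclusion $i_2$ and apply Proposition~\ref{DER}, giving $\D(*,\id_X)=\D(p_1\circ i_2,p_2\circ i_2)\le\D(p_1,p_2)$. (The paper also offers an alternative first proof via Corollary~\ref{EJY0}, restricting to the fiber $g^{-1}(x_0)$, but your route is exactly the one the authors label as the direct consequence of Proposition~\ref{DER}.)
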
   

\begin{proof}In Corollary~\ref{EJY0} take $f=p_1,g=p_2\colon X\times X \to X$ and a point $x_0\in X$. Then $(p_1)_0=\id_X$ and $$\cat(X)=\D(\id_X, x_0)\leq \D(p_1,p_2)=\TC(X).$$

Another proof follows directly from Proposition \ref{DER} by considering the inclusion maps $i_1,i_2\colon X\to X\times X$, so $$\D(*,\id_X)=\D(p_1\circ i_2,p_2\circ i_2)\leq \D(p_1,p_2).$$
\end{proof}

In the next Proposition we shall prove a non-obvious inequality. 

\begin{proposition}\label{IGU}Let $h,h^\prime \colon Z \to X$ and $f,g\colon X\to Y$  be maps such that $f\circ h^\prime \simeq g\circ h^\prime$. Then
$$ \D(f\circ h, g\circ h)\leq D(h,h^\prime).$$
\end{proposition}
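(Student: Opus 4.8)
The plan is to mimic the structure of the proof of Proposition~\ref{DER}, but to exploit the extra hypothesis $f\circ h^\prime\simeq g\circ h^\prime$ to ``fix up'' each open set of a covering realizing $\D(h,h^\prime)$. First I would set $\D(h,h^\prime)\le n$ and take an open covering $Z=W_0\cup\cdots\cup W_n$ such that $h_{\vert W_k}\simeq h^\prime_{\vert W_k}$ for every $k$. The goal is to show that on each $W_k$ the maps $f\circ h$ and $g\circ h$ are homotopic, which immediately gives $\D(f\circ h,g\circ h)\le n$.

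The key computation on a fixed $W_k$ is a chain of homotopies of restrictions to $W_k$:
\begin{equation*}
f\circ h\;\simeq\; f\circ h^\prime\;\simeq\; g\circ h^\prime\;\simeq\; g\circ h .
\end{equation*}
The first homotopy comes from composing a homotopy $h_{\vert W_k}\simeq h^\prime_{\vert W_k}$ with $f$ on the left; the third comes from composing the same homotopy with $g$ on the left; and the middle one is the restriction to $W_k$ of the globally assumed homotopy $f\circ h^\prime\simeq g\circ h^\prime$. Chaining these three homotopies (homotopy being an equivalence relation on maps $W_k\to Y$) yields $(f\circ h)_{\vert W_k}\simeq (g\circ h)_{\vert W_k}$, as desired. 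One should note that ``$f\circ h^\prime$'' here means the composite $f\circ h^\prime\colon Z\to Y$, whose restriction to $W_k$ is $(f\circ h^\prime)_{\vert W_k}$; there is no need to factor through the open sets the way one does in Proposition~\ref{DER}, since $h$ and $h^\prime$ already land in all of $X$.

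I do not expect a serious obstacle: the argument is a three-step concatenation of homotopies together with the fact that the very same covering of $Z$ that witnesses $\D(h,h^\prime)\le n$ also witnesses $\D(f\circ h,g\circ h)\le n$, because the only thing that changes from $W_k$ to $W_k$ is the chosen homotopy $h_{\vert W_k}\simeq h^\prime_{\vert W_k}$, while the middle homotopy $f\circ h^\prime\simeq g\circ h^\prime$ is global and hence available on every $W_k$. The mild point to be careful about is purely bookkeeping: ensuring that all four maps in the chain are regarded as maps out of the common domain $W_k$ so that transitivity of homotopy applies directly. Taking $n=\D(h,h^\prime)$ then gives $\D(f\circ h,g\circ h)\le\D(h,h^\prime)$.
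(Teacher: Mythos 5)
Your argument is correct and is essentially the paper's own proof: the paper concatenates exactly the same three homotopies $f\circ h\simeq f\circ h^\prime\simeq g\circ h^\prime\simeq g\circ h$ on each open set of the covering, merely writing out the explicit piecewise formula in thirds of the interval rather than invoking transitivity of homotopy. No gap.
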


\begin{proof}
We denote the homotopy $f\circ h^\prime \simeq  g\circ h^\prime$ by $\mathcal{G}\colon Z\times [0,1] \to Y$. Assume $\D(h,h^\prime)\leq n$, and let $Z=U_0\cup\cdots\cup U_n$ be a covering such that, for all $j$,  $h_{\vert U_j}\simeq h^\prime_{\vert U_j}$ by a homotopy $\mathcal{H}\colon U_j\times [0,1]\to X$. Let us define the map $\mathcal{F}\colon U_j\times [0,1] \to Y$ as follows:
$$
\mathcal{F}(x,t)= 
\left\{
\begin{array}{ll}   
f\mathcal{H}(x,3t), & \mathrm{if\ }\  0 \leq t \leq 1/3,  \\
\mathcal{G}(x,3t-1), & \mathrm{if\ }\  1/3 \leq  t \leq 2/3,  \\
g\mathcal{H}(x,3-3t), & \mathrm{if\ }\  2/3 \leq  t \leq 1.\\
\end{array}
\right.
$$

This map shows that $fh\simeq g h$ on $U_j$. Hence, $\D(fh,g h)\leq n$.
\end{proof}

\subsection{Domain and codomain}
Recall that   the {\em geometric LS-category} of   $X$,  denoted by $\gcat(X)$, is the least integer $n\geq 0$ such that $X$ can be covered by $n+1$  open sets which are {\em contractible in themselves}.
This subtle difference with the LS-category ---where the open sets are contractible in the ambient space--- is important,  because in general $\gcat$ is not a homotopy invariant.
Since any map with a contractible domain is homotopic to a constant map, it is obvious that   $\D(f,g)\leq \gcat(X)$ for any pair of  continuous maps $f,g\colon X\to Y$.	

The inequality $\D(f,g)\leq \cat(X)$ is much less evident. 

\begin{corollary}\label{CATDOM}Let $f,g \colon X \to Y$ be two maps with  path-connected domain $X$ and codomain $Y$. Then
$$ \D( f, g)\leq \cat(X).$$
\end{corollary}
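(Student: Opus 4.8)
The plan is to deduce the bound $\D(f,g)\le\cat(X)$ from the composition inequalities established earlier, by realizing the pair $(f,g)$ as a common composition with the diagonal. Recall from the excerpt that $\cat(X)=\D(\id_X,*)$, and that the diagonal satisfies $\cat(\Delta_X)=\cat(X)$; equivalently $\cat(X)=\D(\id_X,x_0)$ for any base point. First I would form the product map $(f,g)\colon X\to Y\times Y$ and note that $f=p_1\circ(f,g)$ and $g=p_2\circ(f,g)$, where $p_1,p_2\colon Y\times Y\to Y$ are the projections. Then by Proposition \ref{DER} applied with $h=(f,g)$ we get $\D(f,g)=\D(p_1\circ(f,g),p_2\circ(f,g))\le\D(p_1,p_2)=\TC(Y)$; but this gives the wrong bound (it involves $Y$, not $X$), so this naive route must be replaced.

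The correct route uses Proposition \ref{IGU}. Take $Z=X$, $h=\id_X$, and $h'=c_{x_0}$ the constant map at a point $x_0\in X$. Since $Y$ is path-connected, the constant maps $f\circ c_{x_0}=c_{f(x_0)}$ and $g\circ c_{x_0}=c_{g(x_0)}$ are homotopic, so the hypothesis $f\circ h'\simeq g\circ h'$ of Proposition \ref{IGU} holds. Then Proposition \ref{IGU} yields
$$\D(f,g)=\D(f\circ\id_X,g\circ\id_X)\le\D(\id_X,c_{x_0})=\cat(X),$$
which is exactly the claim. So the whole argument is: choose a base point, use path-connectedness of $Y$ to see that $f$ and $g$ agree up to homotopy after precomposition with the constant map, and invoke the non-obvious inequality of Proposition \ref{IGU}.

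The only subtle point — and the reason Proposition \ref{IGU} was flagged in the text as "a non-obvious inequality" — is the concatenation-of-homotopies construction inside its proof, which is where the real work lives; but since we are allowed to assume Proposition \ref{IGU} as already proved, there is essentially no obstacle remaining. I would just need to make sure path-connectedness of $Y$ is genuinely used (it guarantees $c_{f(x_0)}\simeq c_{g(x_0)}$) and that path-connectedness of $X$ is harmless (it is only needed so that $\cat(X)=\D(\id_X,x_0)$ is independent of the chosen point and matches the normalized LS-category convention). I would write the proof in two or three lines, citing Proposition \ref{IGU} and the identification $\cat(X)=\D(\id_X,*)$.
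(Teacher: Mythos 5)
Your argument is correct and coincides with the paper's own second proof of this corollary: the authors likewise apply Proposition \ref{IGU} with $Z=X$, $h=\id_X$, $h'=x_0$ a constant map, using path-connectedness of $Y$ to get $f\circ h'\simeq g\circ h'$ and concluding $\D(f,g)\leq \D(\id_X,x_0)=\cat(X)$. (The paper's first proof instead goes through Theorem \ref{DISTSECAT} and the fact that $\secat(q)\leq\cat(X)$ for a fibration, but your route is one the authors explicitly give.)
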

\begin{proof} In Theorem \ref{DISTSECAT}, we have $\D(f,g)=\secat(q)$. Since $q$ is a fibration, the homotopy lifting property implies that $\secat(q)\leq \cat(X)$.

Another  proof of this Corollary follows from Proposition \ref{IGU}: if $Z=X$, $h=\id_X$ and $h^\prime=x_0$ a constant map, then the constant maps $f(x_0),g(x_0)\colon X \to Y$ are homotopic because $Y$ is path-connected, so 
$$\D(f,g)=\D(f\circ \id_X,g\circ\id_X)\leq D(\id_X,x_0)=\cat(X).$$
\end{proof}

\begin{corollary}[\cite{FARBER}]\label{TCCAT2} $\TC(X)\leq \cat(X\times X)$.
\end{corollary}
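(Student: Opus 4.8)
The plan is to obtain this inequality as an immediate specialization of Corollary \ref{CATDOM}. Recall that by Proposition \ref{PROJECT} we have $\TC(X)=\D(p_1,p_2)$, where $p_1,p_2\colon X\times X\to X$ are the two projections. Assuming, as is standard in this context, that $X$ is path-connected, the product $X\times X$ is path-connected as well, so the hypotheses of Corollary \ref{CATDOM} are satisfied with domain $X\times X$ and codomain $X$.

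Applying Corollary \ref{CATDOM} to the pair $p_1,p_2\colon X\times X\to X$ then yields
$$\TC(X)=\D(p_1,p_2)\leq \cat(X\times X),$$
which is exactly the claimed statement.

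I would emphasize that the only substantive ingredient is Corollary \ref{CATDOM} itself, whose proof rests on Theorem \ref{DISTSECAT}: the homotopic distance $\D(f,g)$ equals the \v{S}varc genus of the pulled-back path fibration $q\colon P\to X$, and since $q$ is an honest fibration, the homotopy lifting property forces $\secat(q)\leq \cat(X)$ (over a categorical open set of the base one can lift the contracting homotopy to produce a section). Thus there is no real obstacle at this point — the hard work has already been carried out in establishing $\D(f,g)\leq\cat(X)$ — and the corollary reduces to choosing the appropriate maps. Alternatively, one could invoke the second proof of Corollary \ref{CATDOM}, via Proposition \ref{IGU} with $Z=X\times X$, $h=\id_{X\times X}$ and $h^\prime$ a constant map, but the first route is the cleaner one here.
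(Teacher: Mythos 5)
Your proposal is correct and follows exactly the paper's own argument: the paper also obtains Corollary \ref{TCCAT2} by applying Corollary \ref{CATDOM} to the projections $p_1,p_2\colon X\times X\to X$ and invoking Proposition \ref{PROJECT}. The extra remarks on path-connectedness and the mechanism behind Corollary \ref{CATDOM} are accurate but not needed beyond the specialization itself.
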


\begin{proof}In Corollary \ref{CATDOM} take the maps $p_1,p_2\colon X\times X \to X$. 
\end{proof}

For the codomain, we have the following result.

\begin{proposition}\label{CODOMAIN}For maps $f,g\colon X \to Y$ we have $\D(f,g)\leq \TC(Y)$.
\end{proposition}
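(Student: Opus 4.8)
The plan is to deduce the inequality $\D(f,g) \le \TC(Y)$ from the composition property of Proposition~\ref{IZQ} together with the identification $\TC(Y) = \D(p_1, p_2)$ of Proposition~\ref{PROJECT}, where $p_1, p_2 \colon Y \times Y \to Y$ are the two projections. The key observation is that an arbitrary pair of maps $f, g \colon X \to Y$ factors through the projections: writing $(f,g) \colon X \to Y \times Y$ for the map with components $f$ and $g$, we have $f = p_1 \circ (f,g)$ and $g = p_2 \circ (f,g)$. Thus $\D(f,g) = \D\big(p_1 \circ (f,g),\, p_2 \circ (f,g)\big)$.

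Now I would apply Proposition~\ref{IZQ} — no, more precisely Proposition~\ref{DER}, which handles precomposition: with the roles $h = (f,g) \colon X \to Y\times Y$ and the maps $p_1, p_2 \colon Y \times Y \to Y$, Proposition~\ref{DER} gives
$$\D\big(p_1 \circ (f,g),\, p_2 \circ (f,g)\big) \le \D(p_1, p_2).$$
Combining the two displays with $\D(p_1,p_2) = \TC(Y)$ (Proposition~\ref{PROJECT}) yields $\D(f,g) \le \TC(Y)$, which is the claim.

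There is essentially no obstacle here: the entire argument is the factorization $f = p_1\circ(f,g)$, $g = p_2\circ(f,g)$ followed by a single invocation of the already-established monotonicity of $\D$ under precomposition. The only point requiring a word of care is that $\TC(Y)$ is defined for $Y$ path-connected (it is the \v{S}varc genus of the path fibration on $Y\times Y$), so the statement should be read with that standing hypothesis; under it, Proposition~\ref{PROJECT} applies and the chain of inequalities is valid. One could alternatively phrase the proof directly via open covers: take a cover $\{U_0,\dots,U_n\}$ of $Y\times Y$, $n = \TC(Y)$, on each piece of which $p_1 \simeq p_2$, and pull back along $(f,g)$ to get a cover of $X$ on each piece of which $f \simeq g$; but the composition-property route is cleaner and is the natural thing given the development in this section.
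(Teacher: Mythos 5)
Your proof is correct, but it takes a different route from the paper's. The paper deduces the inequality from Theorem~\ref{DISTSECAT}: it identifies $\D(f,g)$ with $\secat(q)$, where $q$ is the pullback of the path fibration $\pi\colon PY\to Y\times Y$ along $(f,g)$, and then invokes the standard fact that sectional category does not increase under pullback, so $\secat(q)\leq\secat(\pi)=\TC(Y)$. You instead factor $f=p_1\circ(f,g)$ and $g=p_2\circ(f,g)$ and apply the precomposition monotonicity of Proposition~\ref{DER} together with $\D(p_1,p_2)=\TC(Y)$ from Proposition~\ref{PROJECT}. The two arguments are morally dual descriptions of the same phenomenon --- pulling back the path fibration along $(f,g)$ versus precomposing the projections with $(f,g)$ --- but yours is formally more elementary: it stays entirely at the level of open covers and restrictions of maps, needing no fibration machinery beyond the statement of Proposition~\ref{PROJECT} (which the paper does prove via \v{S}varc genus, so the dependence on Theorem~\ref{DISTSECAT} is only indirect). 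The paper's route has the advantage of making transparent why the bound is really a statement about sectional category. Your remark about the standing path-connectedness hypothesis on $Y$ is apt; the open-cover alternative you sketch at the end is exactly the content of Proposition~\ref{DER} unwound, so nothing is lost by citing it.
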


\begin{proof}This follows from Theorem \ref{DISTSECAT}, because if the fibration $q$ is a pullback of the fibration $\pi$ then  $\secat(q)\leq \secat(\pi)$, which is exactly $\D(f,g)\leq \TC(Y)$.\end{proof}

Notice that in general it is not true that $\D(f,g)\leq \cat(Y)$. In fact, by taking the projections $p_1,p_2\colon Y\times Y \to Y$ this would imply that $\TC(Y)\leq\cat(Y)$, which is not true in general. However, this is true for $H$-spaces (see Section \ref{HSP}).

\subsection{Invariance}
We  prove the homotopy invariance of the homotopic distance.

\begin{proposition}\label{EQUIV1}Let $f,g\colon X\to Y$ be maps and let $\alpha\colon Y \to Y^\prime$ be a map with a left homotopy inverse. Then $\D(\alpha\circ f,\alpha\circ g)=\D(f,g)$.
\end{proposition}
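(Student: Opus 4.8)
The plan is to prove both inequalities $\D(\alpha\circ f,\alpha\circ g)\leq \D(f,g)$ and $\D(f,g)\leq \D(\alpha\circ f,\alpha\circ g)$ separately. The first inequality is immediate: it is exactly Proposition \ref{IZQ} applied with $h=\alpha$, so nothing new is required there. Hence the entire content of the statement lies in the reverse inequality, which is where the hypothesis that $\alpha$ has a left homotopy inverse must be used.

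For the reverse inequality, let $\beta\colon Y'\to Y$ be a left homotopy inverse of $\alpha$, so that $\beta\circ\alpha\simeq \id_Y$. Applying Proposition \ref{IZQ} again, this time with $h=\beta$ and with the maps $\alpha\circ f,\alpha\circ g\colon X\to Y'$, gives
$$\D(\beta\circ\alpha\circ f,\beta\circ\alpha\circ g)\leq \D(\alpha\circ f,\alpha\circ g).$$
Now I would invoke Proposition \ref{HOMOT} (homotopy invariance of the distance in each argument): since $\beta\circ\alpha\circ f\simeq \id_Y\circ f=f$ and likewise $\beta\circ\alpha\circ g\simeq g$, the left-hand side equals $\D(f,g)$. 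Combining the two inequalities yields equality.

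I do not anticipate a serious obstacle here; the argument is a formal consequence of two already-established facts (the composition inequality \ref{IZQ} and the homotopy invariance \ref{HOMOT}). The only point to be careful about is that Proposition \ref{IZQ} is stated for postcomposition by a single map $h\colon Y\to Z$, so both uses of it must be postcompositions — which is indeed the case, since we postcompose first by $\alpha$ and then by $\beta$. If instead $\alpha$ had only a \emph{right} homotopy inverse, this argument would break, since we would need to precompose; that asymmetry is worth a remark, though the present statement only concerns the left-inverse case and its companion (a right-inverse version handled via Proposition \ref{DER}) presumably appears immediately afterward in the paper.
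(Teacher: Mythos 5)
Your argument is correct and is exactly the paper's proof: the chain $\D(f,g)\geq \D(\alpha f,\alpha g)\geq \D(\beta\alpha f,\beta\alpha g)=\D(f,g)$ using Proposition \ref{IZQ} twice and the homotopy invariance of Proposition \ref{HOMOT}. Your closing remark about the right-inverse companion is also accurate --- the paper states it immediately afterward as Proposition \ref{EQUIV2}.
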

\begin{proof}
By Propositions \ref{IZQ} and   \ref{HOMOT}, we have
$$\D(f,g)\geq \D(\alpha f,\alpha  g)\geq \D(\beta \alpha  f,\beta \alpha  g)=\D(f,g),$$
because $\beta \alpha \simeq \id_Y$ implies $\beta\alpha f \simeq f$ and $\beta\alpha g\simeq g$.
\end{proof} 
Analogously
\begin{proposition}\label{EQUIV2}Let $f,g\colon X\to Y$ be maps and let $\beta\colon X^\prime \to X$ be a map with a right homotopy inverse. Then $\D(f\circ \beta,g\circ \beta)=\D(f,g)$.
\end{proposition}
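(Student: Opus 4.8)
The plan is to mimic the proof of Proposition~\ref{EQUIV1}, but now composing on the right rather than on the left, and to invoke Proposition~\ref{DER} in place of Proposition~\ref{IZQ}. Let $\gamma\colon X\to X^\prime$ be a right homotopy inverse of $\beta$, so that $\beta\circ\gamma\simeq\id_X$.

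First I would apply Proposition~\ref{DER} to the maps $f,g\colon X\to Y$ and the map $\beta\colon X^\prime\to X$, obtaining $\D(f\circ\beta,g\circ\beta)\leq\D(f,g)$. Next I would apply Proposition~\ref{DER} again, this time to the maps $f\circ\beta,g\circ\beta\colon X^\prime\to Y$ and the map $\gamma\colon X\to X^\prime$, obtaining $\D(f\circ\beta\circ\gamma,g\circ\beta\circ\gamma)\leq\D(f\circ\beta,g\circ\beta)$. Finally, since $\beta\circ\gamma\simeq\id_X$ we have $f\circ\beta\circ\gamma\simeq f$ and $g\circ\beta\circ\gamma\simeq g$, so Proposition~\ref{HOMOT} gives $\D(f\circ\beta\circ\gamma,g\circ\beta\circ\gamma)=\D(f,g)$. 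Chaining the three relations yields
$$\D(f,g)\leq\D(f\circ\beta,g\circ\beta)\leq\D(f\circ\beta\circ\gamma,g\circ\beta\circ\gamma)=\D(f,g),$$
forcing equality throughout, and in particular $\D(f\circ\beta,g\circ\beta)=\D(f,g)$.

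There is essentially no obstacle here: the statement is the exact mirror image of Proposition~\ref{EQUIV1}, and both ingredients (monotonicity under precomposition, and homotopy invariance of the distance) are already available in the excerpt. The only point requiring a moment's care is getting the direction of the homotopy inverse right — "right homotopy inverse" means $\beta\circ\gamma\simeq\id_X$, which is precisely what is needed so that precomposing with $\gamma$ after precomposing with $\beta$ recovers $f$ and $g$ up to homotopy. Because the paper has already set up Proposition~\ref{DER} and Proposition~\ref{HOMOT}, the whole argument is a two-line sandwich, exactly as in the proof of the preceding proposition.
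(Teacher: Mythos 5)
Your argument is correct and is precisely the ``analogous'' argument the paper intends: it gives no separate proof of this proposition, only the mirror-image proof of Proposition~\ref{EQUIV1} via Propositions~\ref{IZQ} and~\ref{HOMOT}, and you correctly substitute Proposition~\ref{DER} for Proposition~\ref{IZQ}. The only slip is in your final display, where the inequality signs are reversed relative to the three relations you actually derived: the chain should read $\D(f,g)\geq \D(f\circ\beta,g\circ\beta)\geq \D(f\circ\beta\circ\gamma,g\circ\beta\circ\gamma)=\D(f,g)$, exactly as in the paper's proof of Proposition~\ref{EQUIV1}; the conclusion is unaffected.
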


As a consequence, $\D(\ ,\ )$ is a homotopy invariant in the following sense 

\begin{proposition}\label{INVARIANCEPAIR}Assume that there exist homotopy equivalences $\beta \colon X^\prime\simeq X$ and $\alpha\colon Y\simeq Y^\prime$ such that $f\colon X \to Y$ (resp. $g$) and $f^\prime\colon X^\prime \to Y^\prime$ (resp. $g^\prime)$ verify $\alpha\circ f \circ\beta\simeq f' $ (resp. $\alpha\circ g\circ \beta \simeq g' $):
$$	\begin{tikzcd}
		X \arrow[r,shift left, "f"] \arrow[r, shift right,"g"']   & Y \arrow[d, "\alpha"] \\
		X^\prime  \arrow[u, "\beta"] \arrow[r,shift left, "f^\prime"]\arrow[r,shift right,"g^\prime"'] &   Y^\prime
	\end{tikzcd}$$
Then $\D(f,g)=\D(f^\prime,g^\prime)$. 
\end{proposition}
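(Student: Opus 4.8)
The plan is to combine the two one-sided invariance results, Propositions \ref{EQUIV1} and \ref{EQUIV2}, with the homotopy-class invariance of Proposition \ref{HOMOT}. The situation is that $\alpha\circ f\circ\beta\simeq f'$ and $\alpha\circ g\circ\beta\simeq g'$, where $\beta\colon X'\to X$ and $\alpha\colon Y\to Y'$ are homotopy equivalences. First I would apply Proposition \ref{EQUIV1} to the pair $f,g$ and the map $\alpha$: since $\alpha$ is a homotopy equivalence it has a left homotopy inverse, so $\D(\alpha\circ f,\alpha\circ g)=\D(f,g)$. Next I would apply Proposition \ref{EQUIV2} to the pair $\alpha\circ f,\alpha\circ g\colon X\to Y'$ and the map $\beta\colon X'\to X$: since $\beta$ is a homotopy equivalence it has a right homotopy inverse, so $\D(\alpha\circ f\circ\beta,\alpha\circ g\circ\beta)=\D(\alpha\circ f,\alpha\circ g)$. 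Chaining these two equalities gives $\D(\alpha\circ f\circ\beta,\alpha\circ g\circ\beta)=\D(f,g)$.

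Finally I would use Proposition \ref{HOMOT} to pass from $\alpha\circ f\circ\beta$ and $\alpha\circ g\circ\beta$ to their respective homotopy classes $f'$ and $g'$: since $\alpha\circ f\circ\beta\simeq f'$ and $\alpha\circ g\circ\beta\simeq g'$, we get $\D(f',g')=\D(\alpha\circ f\circ\beta,\alpha\circ g\circ\beta)$. Combining with the previous displayed equality yields $\D(f,g)=\D(f',g')$, which is the claim.

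There is essentially no obstacle here: the statement is a formal consequence of results already proved, and the only mild point to check is the bookkeeping of domains and codomains so that Propositions \ref{EQUIV1} and \ref{EQUIV2} are applied to maps between the correct spaces (the intermediate pair $\alpha\circ f,\alpha\circ g$ lives on $X\to Y'$, and after precomposing with $\beta$ on $X'\to Y'$). One could equivalently organize the argument as a single chain of inequalities in the style of the proof of Proposition \ref{EQUIV1}, but splitting it into the two already-established invariance lemmas is cleaner. No diagram chase beyond what is displayed in the statement is needed.
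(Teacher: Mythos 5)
Your argument is correct and is precisely the route the paper intends: the proposition is stated as a consequence of Propositions \ref{EQUIV1} and \ref{EQUIV2} (together with Proposition \ref{HOMOT}), and your chaining of the three results, with the domain/codomain bookkeeping you note, is the proof. Nothing is missing.
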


%

\begin{corollary} Both $\cat(\ )$ and $\TC(\ )$ are homotopy invariant, that is, if there exist homotopy equivalences $X\simeq X^\prime$, then $\cat(X)=\cat(X')$ and $\TC(X)=\TC(X')$.
\end{corollary}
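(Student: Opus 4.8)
The statement to prove is that $\cat$ and $\TC$ are homotopy invariants: if $X\simeq X'$, then $\cat(X)=\cat(X')$ and $\TC(X)=\TC(X')$. The natural approach is to realize both invariants as special instances of the homotopic distance $\D(-,-)$ and then invoke the invariance results that precede this corollary, chiefly Proposition \ref{INVARIANCEPAIR}.

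\textbf{Case of $\cat$.} First I would recall that $\cat(X)=\D(\id_X,*)$ by definition (and also $\cat(X)=\D(i_1,i_2)$ by Proposition \ref{INCLCAT}, though either description works). Fix a homotopy equivalence $\beta\colon X'\to X$ with homotopy inverse $\gamma\colon X\to X'$. I want to exhibit the data of Proposition \ref{INVARIANCEPAIR} with $Y=X$, $Y'=X'$, $\alpha=\gamma$, source maps $f=\id_X$, $g=*$ and target maps $f'=\id_{X'}$, $g'=*'$. Indeed $\alpha\circ f\circ\beta=\gamma\circ\id_X\circ\beta=\gamma\circ\beta\simeq\id_{X'}=f'$, and $\alpha\circ g\circ\beta=\gamma\circ *\circ\beta$ is a constant map $X'\to X'$, hence homotopic to $*'$ since $X'$ is path-connected (homotopy equivalence preserves path-connectedness, so we may assume this). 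Proposition \ref{INVARIANCEPAIR} then gives $\cat(X)=\D(\id_X,*)=\D(\id_{X'},*')=\cat(X')$.

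\textbf{Case of $\TC$.} Here I use $\TC(X)=\D(p_1,p_2)$ where $p_1,p_2\colon X\times X\to X$ are the projections (Proposition \ref{PROJECT}). A homotopy equivalence $\beta\colon X'\to X$ induces $\beta\times\beta\colon X'\times X'\to X\times X$, which is again a homotopy equivalence, with inverse $\gamma\times\gamma$. Now apply Proposition \ref{INVARIANCEPAIR} with source $X\times X$, target $X$, and the pair $(p_1,p_2)$ on top, the pair $(p_1',p_2')\colon X'\times X'\to X'$ on the bottom, taking the horizontal left map to be $\beta\times\beta$ and the right map $\alpha=\gamma$. One checks $\gamma\circ p_i\circ(\beta\times\beta)=\gamma\circ\beta\circ p_i'\simeq p_i'$ for $i=1,2$, since $p_i\circ(\beta\times\beta)=\beta\circ p_i'$ on the nose. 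Hence $\TC(X)=\D(p_1,p_2)=\D(p_1',p_2')=\TC(X')$.

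\textbf{Main obstacle.} There is no real obstacle; the proof is a direct application of Proposition \ref{INVARIANCEPAIR}. The only points needing a word of care are: (i) checking that the relevant squares commute up to homotopy, which in both cases follows because projections and constant maps behave strictly functorially, so the required homotopies come for free from $\gamma\beta\simeq\id$; and (ii) the standing path-connectedness hypothesis in the definition of $\cat$ and $\TC$, which is preserved under homotopy equivalence, so it transfers to $X'$ automatically.
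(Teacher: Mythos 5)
Your proof is correct and is exactly the argument the paper intends: the corollary is stated as an immediate consequence of Proposition \ref{INVARIANCEPAIR} applied to $\cat(X)=\D(\id_X,*)$ and $\TC(X)=\D(p_1,p_2)$, with the routine verification that the relevant squares commute up to homotopy. The paper omits the details, so your write-up simply fills them in along the expected lines.
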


\subsection{Normal spaces}
For normal spaces we shall use the following strikingly general Lemma, proved by Oprea and Strom \cite[Lemma 4.3]{JOHNJEFF}. 
\begin{lemma}
\label{JOHNJEFF}
Let $Z$ be a normal space with two open covers $\,\mathcal{U} = \{U_0, \dots, U_m\}$ and $\mathcal{V} = \{V_0, \dots, V_n\}$
such that each set of $\,\mathcal{U}$ satisfies Property (A) and each set of $\mathcal{V}$ satisfies Property
(B). Assume that Properties (A) and (B) are inherited by open subsets and
disjoint unions. Then $Z$ has an open cover
$\mathcal{W} = \{W_0,\dots, W_{m+n}\}$
by open sets, satisfying both Property (A) and Property (B).
\end{lemma}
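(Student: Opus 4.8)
\textbf{Proof plan for Lemma \ref{JOHNJEFF}.}

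The statement is a general covering-combination lemma, so the natural strategy is a direct inductive construction of the required cover $\mathcal{W}$ from $\mathcal{U}$ and $\mathcal{V}$, using normality of $Z$ to perform the necessary shrinkings. The plan is to reduce, by induction on $m+n$, to the base case where we have to combine a single Property-(A) set $U$ with a single Property-(B) set $V$ whose union is $Z$; from such pieces one reassembles the cover of size $m+n+1$. Concretely, one first invokes the shrinking lemma for normal spaces to replace $\mathcal{U}$ by a cover $\{U_0',\dots,U_m'\}$ with $\overline{U_i'}\subset U_i$. Then one defines, for each index $k$ with $0\le k\le m+n$, a set $W_k$ as a disjoint union over all pairs $(i,j)$ with $i+j=k$ of suitable open pieces carved out of $U_i\cap V_j$, arranged so that the closures of the pieces contributing to a fixed $W_k$ are pairwise disjoint. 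Because Property (A) is inherited by open subsets and by disjoint unions, each such $W_k$ still satisfies (A) (each piece sits inside some $U_i$), and symmetrically each $W_k$ satisfies (B) (each piece sits inside some $V_j$). The delicate point is ensuring the $W_k$ actually cover $Z$: a point $z$ lies in some $U_i$ and some $V_j$, and one must guarantee that, after all the shrinkings, $z$ survives in the piece indexed by that particular $(i,j)$ for $k=i+j$, which is where a careful ``staircase'' choice of the carved pieces — using normality repeatedly to separate closures — is required.

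The key steps, in order, are: (1) apply the shrinking lemma to get closure-refinements of both covers; (2) for each pair $(i,j)$ build an open set $A_{i,j}\subset U_i\cap V_j$ such that the family $\{A_{i,j}\}$ still covers $Z$ and, for each fixed value $k=i+j$, the closures $\{\overline{A_{i,j}} : i+j=k\}$ are pairwise disjoint (this is the heart of the argument and uses normality to push apart the finitely many closures at each ``level''); (3) set $W_k=\bigcup_{i+j=k} A_{i,j}$, which is a disjoint union of open sets whose closures are separated, hence genuinely a topological disjoint union on the relevant subspace; (4) conclude that $W_k$ inherits Property (A) because each $A_{i,j}$ is an open subset of $U_i$ and (A) passes to open subsets and disjoint unions, and likewise $W_k$ inherits Property (B); (5) check $\bigcup_k W_k = \bigcup_{i,j} A_{i,j} = Z$, so $\mathcal{W}=\{W_0,\dots,W_{m+n}\}$ is the desired cover of size $m+n+1$.

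I expect step (2) — simultaneously covering $Z$ while keeping the closures at each level $k$ pairwise disjoint — to be the main obstacle, since one is juggling two indices against a single separation condition; the cleanest route is probably an induction that peels off one set of $\mathcal{U}$ at a time, at each stage using normality to split the residual space into a part safely inside the current $U_i$ and a part safely inside the union of the remaining $U_{i'}$, and then applying the inductive hypothesis to the smaller configuration. Since this lemma is quoted from Oprea--Strom \cite{JOHNJEFF}, it is also legitimate simply to cite that reference for the detailed verification and only sketch the construction above; but the argument is self-contained enough that carrying it out via the staircase/shrinking scheme is feasible.
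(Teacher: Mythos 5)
First, a point of reference: the paper does not prove this lemma at all --- it is imported verbatim from Oprea--Strom \cite{JOHNJEFF} with a bare citation. So the fallback you offer in your final sentence (cite the reference and only sketch the idea) is exactly the paper's own treatment, and in that mode your proposal matches it.

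As a self-contained argument, however, your sketch has a genuine gap precisely where you say the ``main obstacle'' is: step (2), the construction of the sets $A_{i,j}\subseteq U_i\cap V_j$ that cover $Z$ and are pairwise disjoint on each level $i+j=k$, is the entire content of the lemma, and you do not carry it out; steps (1) and (3)--(5) are routine once (2) is done. The step is not just bookkeeping --- naive orderings fail at points where several index pairs tie --- but it does admit a clean solution that your plan misses. Take partitions of unity $\{\phi_i\}$ and $\{\psi_j\}$ subordinate to $\mathcal U$ and $\mathcal V$ (these exist for finite open covers of a normal space; no shrinking lemma is needed beyond this), and set
$$A_{i,j}=\bigl\{z\in Z:\ \phi_i(z)\psi_j(z)>0\ \text{and}\ \phi_i(z)\psi_j(z)>\phi_{i'}(z)\psi_{j'}(z)\ \text{for all }(i',j')\text{ incomparable with }(i,j)\bigr\},$$
where incomparable means $i'<i,\,j'>j$ or $i'>i,\,j'<j$. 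Any two distinct pairs with the same sum $k$ are incomparable, so the $A_{i,j}$ with $i+j=k$ are pairwise disjoint; each is an open subset of $U_i\cap V_j$; and they cover $Z$ because, for $i_0=\min\{i:\phi_i(z)>0\}$ and $j_0=\min\{j:\psi_j(z)>0\}$, every pair incomparable with $(i_0,j_0)$ has product value $0$ at $z$, so $z\in A_{i_0,j_0}$. Two further remarks on your plan: you do not need the closures to be disjoint --- pairwise disjoint open sets already form a topological disjoint union, which is all that the inheritance hypothesis requires --- and the alternative ``peel off one $U_i$ at a time'' induction you suggest is delicate, because an open subspace of a normal space need not be normal, so the inductive hypothesis cannot be applied to the residual open piece without extra care. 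In short: either write out the construction above (which is essentially what Oprea and Strom do), or quote the lemma as the paper does; the sketch as it stands is not yet a proof.
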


As a first consequence, we prove that the homotopic distance verifies the triangular inequality, thus being a true distance in the space of homotopy classes.

\begin{proposition}\label{TRIANG}Let $f,g,h\colon X \to Y$ be maps defined on the normal space $X$. Then
$$\D(f,h)\leq \D(f,g)+\D(g,h).$$
\end{proposition}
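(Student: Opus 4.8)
The strategy is to invoke Lemma~\ref{JOHNJEFF} with $Z=X$ and with the two properties chosen to capture the two distance hypotheses. First I would set $m=\D(f,g)$ and $n=\D(g,h)$, and pick open covers $\mathcal{U}=\{U_0,\dots,U_m\}$ of $X$ with $f_{\vert U_i}\simeq g_{\vert U_i}$ for all $i$, and $\mathcal{V}=\{V_0,\dots,V_n\}$ of $X$ with $g_{\vert V_j}\simeq h_{\vert V_j}$ for all $j$. Then I would define Property (A) on an open set $W\subseteq X$ to be ``$f_{\vert W}\simeq g_{\vert W}$'' and Property (B) to be ``$g_{\vert W}\simeq h_{\vert W}$''.

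Next I would check the hypotheses of Lemma~\ref{JOHNJEFF}. That each $U_i$ satisfies (A) and each $V_j$ satisfies (B) is immediate from the choice of covers, since a homotopy on a larger set restricts to a homotopy on any open subset; this same observation shows Properties (A) and (B) are inherited by open subsets. For disjoint unions: if $W=\bigsqcup_\lambda W_\lambda$ with each $W_\lambda$ open and $f_{\vert W_\lambda}\simeq g_{\vert W_\lambda}$ via a homotopy $\mathcal{H}_\lambda$, then the map $\mathcal{H}\colon W\times[0,1]\to Y$ defined by $\mathcal{H}(x,t)=\mathcal{H}_\lambda(x,t)$ for $x\in W_\lambda$ is continuous, because $W\times[0,1]=\bigsqcup_\lambda (W_\lambda\times[0,1])$ is a disjoint union of open sets; hence (A) passes to disjoint unions, and likewise (B). With all hypotheses verified, Lemma~\ref{JOHNJEFF} produces an open cover $\mathcal{W}=\{W_0,\dots,W_{m+n}\}$ of $X$ with each $W_k$ satisfying both (A) and (B), so $f_{\vert W_k}\simeq g_{\vert W_k}\simeq h_{\vert W_k}$, whence $f_{\vert W_k}\simeq h_{\vert W_k}$ by transitivity of homotopy. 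This exhibits a cover of $X$ by $m+n+1$ open sets on each of which $f$ and $h$ are homotopic, so $\D(f,h)\leq m+n=\D(f,g)+\D(g,h)$.

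I do not expect a serious obstacle here: the real content is packaged inside Lemma~\ref{JOHNJEFF}, and what remains is only to confirm that ``being homotopic on an open set'' is the kind of local, subset- and disjoint-union-hereditary property to which that lemma applies. The one point requiring a word of care is the disjoint-union clause, where one must note that the homotopies on the pieces glue continuously precisely because the pieces are \emph{open} (so their products with $[0,1]$ form an open partition of $W\times[0,1]$); if the covers are infinite one should also observe that the lemma is applied only to the finite covers $\mathcal{U},\mathcal{V}$, while the disjoint-union hereditary property may involve arbitrary disjoint unions, which causes no difficulty. Finally, if either $\D(f,g)$ or $\D(g,h)$ is infinite the inequality is vacuous, so we may assume both are finite, which is what allows us to choose the finite covers $\mathcal{U}$ and $\mathcal{V}$ in the first place.
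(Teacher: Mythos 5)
Your proposal is correct and follows exactly the paper's own argument: apply the Oprea--Strom lemma (Lemma~\ref{JOHNJEFF}) with Property (A) being ``$f\simeq g$ on the set'' and Property (B) being ``$g\simeq h$ on the set,'' noting that both are hereditary under open subsets and disjoint unions. Your verification of the disjoint-union clause and the remark on the infinite case are just more explicit versions of what the paper leaves as ``clearly.''
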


\begin{proof}
Let $\D(f,g)=m$ and $\D(g,h)=n$. Take coverings $\{U_0,\dots,U_m\}$ and $\{V_0,\dots,V_n\}$ of $X$ such that $f_{\vert U_i}\simeq g_{\vert U_i}$ for all $i=0,\dots m$ and $g_{\vert V_j}\simeq h_{\vert V_j}$ for all $j=0,\dots,n$. Clearly these properties  are closed for open subsets and disjoint unions. Then, by Lemma \ref{JOHNJEFF}, there is a third covering  $\{W_0,\dots, W_{m+n}\}$ such that
$f_{\vert W_k}\simeq g_{\vert W_k}\simeq h_{\vert W_k}$,
for all $k$, thus proving that $\D(f,h)\leq m+n$.
\end{proof}

\begin{remark} Proposition \ref{TRIANG} does not hold in general for arbitrary topological spaces, as we shall show in subsection \ref{FINITESPACES}.
\end{remark}

Note that Corollary \ref{cor:d_upper_bound_codomain} could be improved (in normal spaces), because
$\D(f,g)\leq \D(f,*)+\D(*,g)$
means that
$\D(f,g)\leq \cat(f)+\cat(g)$.

\medskip

Another  result also follows from Lemma \ref{JOHNJEFF}.
\begin{proposition}Let $X$ be a normal space. For maps $f,g\colon X \to Y$ and $f^\prime,g^\prime\colon Y \to Z$ we have
$$D(f^\prime\circ f, g^\prime \circ g)\leq D(f,g)+D(f^\prime, g^\prime).$$
\end{proposition}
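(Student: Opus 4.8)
The goal is to bound $\D(f'\circ f, g'\circ g)$ by combining the covering realizing $\D(f,g)$ on $X$ with the covering realizing $\D(f',g')$ on $Y$. The natural strategy is first to reduce $\D(f'\circ f, g'\circ g)$ to a distance involving only the first layer (the map $f$), via the triangle inequality (Proposition \ref{TRIANG}, available since $X$ is normal), and then to transport the $Y$-covering back to $X$ along $f$ using the pullback / open-subset stability of the relevant property, exactly as in the proof of Proposition \ref{DER}.

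Concretely, first I would insert the intermediate map $g'\circ f\colon X\to Z$ and write
$$\D(f'\circ f,\, g'\circ g)\ \leq\ \D(f'\circ f,\, g'\circ f)\ +\ \D(g'\circ f,\, g'\circ g),$$
using Proposition \ref{TRIANG}. The second summand is at most $\D(f,g)$ by Proposition \ref{IZQ} (post-composition with $g'$). For the first summand, $\D(f'\circ f, g'\circ f)\leq \D(f',g')$ by Proposition \ref{DER} (pre-composition with $f$). Adding these gives exactly $\D(f'\circ f, g'\circ g)\leq \D(f,g)+\D(f',g')$, and — crucially — the only place normality of $X$ is used is in the single application of the triangle inequality, which is legitimate. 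So in fact the statement can be obtained without invoking Lemma \ref{JOHNJEFF} directly, only through Proposition \ref{TRIANG}, which itself rests on that Lemma.

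Alternatively, to stay closer to the "another result also follows from Lemma \ref{JOHNJEFF}'' framing, I would argue directly: let $\{U_i\}_{i=0}^m$ realize $\D(f,g)=m$ on $X$ and $\{V_j\}_{j=0}^n$ realize $\D(f',g')=n$ on $Y$. Set $\widetilde V_j = f^{-1}(V_j)$; then $\{U_i\}$ is an open cover of $X$ whose sets satisfy Property (A) = "the restrictions of $f'\circ f$ and $g'\circ f$ are homotopic there'' (because on $U_i$ one has $f\simeq g$, hence $g'\circ f\simeq g'\circ g$, and one still needs $f'\circ f\simeq g'\circ f$ — so Property (A) should instead be taken as "$f\simeq g$ on the set''), and $\{\widetilde V_j\}$ is an open cover whose sets satisfy Property (B) = "$f'\circ f\simeq g'\circ f$ there'' (pull back the homotopy $f'_{|V_j}\simeq g'_{|V_j}$ along the restriction of $f$, as in Proposition \ref{DER}). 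Both properties are inherited by open subsets and disjoint unions. Lemma \ref{JOHNJEFF} then yields a cover $\{W_k\}_{k=0}^{m+n}$ with both properties, and on each $W_k$ we get $f'\circ f\simeq g'\circ f\simeq g'\circ g$, hence $\D(f'\circ f,g'\circ g)\leq m+n$.

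**Main obstacle.** The only delicate point is bookkeeping the two homotopies so that Lemma \ref{JOHNJEFF}'s hypotheses apply verbatim: Property (A) must be phrased as "$f\simeq g$ on the set'' (which gives $g'f\simeq g'g$) rather than as a statement about $f'f$, while Property (B) is "$f'f\simeq g'f$ on the set''; one then chains the two homotopies on $W_k$. Verifying that both properties pass to open subsets and disjoint unions is routine (restrict the homotopy; glue over components). The triangle-inequality route sidesteps even this, at the cost of a short explicit homotopy hidden inside Proposition \ref{TRIANG}; I expect the authors take that cleaner route.
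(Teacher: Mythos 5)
Both of your arguments are correct, but contrary to your guess the paper takes your \emph{second} route, not the first: it applies Lemma \ref{JOHNJEFF} directly, with Property (A) being ``$g'\circ f\simeq g'\circ g$ on the set'' (obtained from the cover $\{U_i\}$ where $f\simeq g$) and Property (B) being ``$f'\circ f\simeq g'\circ f$ on the set'' (obtained from the pulled-back cover $\{f^{-1}(V_j)\}$), and then chains the two homotopies on each $W_k$. Your bookkeeping worry is immaterial: it does not matter whether Property (A) is phrased as ``$f\simeq g$'' or as its consequence ``$g'f\simeq g'g$,'' since both are stable under open subsets and disjoint unions. Your first argument --- the decomposition $\D(f'f,g'g)\leq \D(f'f,g'f)+\D(g'f,g'g)$ via Proposition \ref{TRIANG}, followed by Propositions \ref{DER} and \ref{IZQ} --- is a genuinely different and perfectly valid derivation; it is shorter and makes the role of normality transparent (a single use of the triangle inequality), at the cost of routing through Proposition \ref{TRIANG}, which itself rests on the same Lemma \ref{JOHNJEFF}, so the two proofs have the same logical foundation.
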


\begin{proof}
If $D(f,g)=m$ there is a covering $U_0,\dots,U_m$ of $X$ where $f\simeq g$.  It follows that $g^\prime\circ f\simeq g^\prime \circ g$ for this covering. Clearly, the criterion of the Lemma is verified.

Now, if $D(f^\prime,g^\prime)=n$, there is a covering $V_0,\dots,V_n$ of $Y$ where $f^\prime \simeq g^\prime$. But then, the covering $f^{-1}(V_0),\dots,f^{-1}(V_n)$ of $X$ verifies $f^\prime\circ f\simeq g^\prime\circ f$. This property also fullfils the criterion.

Hence there is a third covering $W_0,\dots,W_{m+n}$ of $X$ where $f^\prime\circ f \simeq g^\prime\circ f\simeq g^\prime\circ g$, which implies $D(f^\prime\circ f,g^\prime\circ g)\leq m+n$.
\end{proof}

The latter result generalizes Propositions \ref{IZQ} and \ref{DER}, at least for normal spaces, because $D=0$ for homotopic maps.

\subsection{Products}
We study   the behaviour of the homotopic distance under products. 

\begin{theorem}\label{DISTPRODUCT}
Given $f,g\colon X\to Y$ and $f',g'\colon X'\to Y'$, assume that the space  $X\times X'$ is normal. Then the maps $f\times f^\prime, g\times g^\prime\colon X\times X^\prime \to Y\times Y^\prime$ verify that $$\D(f\times f',g\times g')\leq \D(f,g) + \D(f',g').$$
\end{theorem}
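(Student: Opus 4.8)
The plan is to combine the sub-additivity of the distance over open covers (Proposition \ref{lema:subadditivity}) with the fact that the distance between products is controlled by the distances between factors when one of the two factors is held trivial, all inside the normal space $X \times X'$ so that Lemma \ref{JOHNJEFF} is available. First I would set $m = \D(f,g)$ and $n = \D(f',g')$, choose an open cover $\{U_0,\dots,U_m\}$ of $X$ with $f_{|U_i} \simeq g_{|U_i}$, and an open cover $\{V_0,\dots,V_n\}$ of $X'$ with $f'_{|V_j} \simeq g'_{|V_j}$. The naive move is to take the product cover $\{U_i \times V_j\}$ of $X \times X'$: on each $U_i \times V_j$ we have $f \times f' \simeq g \times g'$ (run the two homotopies in parallel), which already gives $\D(f\times f', g\times g') + 1 \leq (m+1)(n+1)$. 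The point of the theorem is to do better and get the additive bound, and this is where normality enters.

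The key step is to recognize that the two families of open subsets of $X \times X'$, namely ``sets on which $f\times f'$ and $g \times g'$ agree after projecting to the first factor'' (more precisely, sets $W$ such that $(f\circ \pi_1)_{|W} \simeq (g \circ \pi_1)_{|W}$, where $\pi_1 \colon X \times X' \to X$) and the analogous family for the second factor, both satisfy the hypotheses of Lemma \ref{JOHNJEFF}: each property is inherited by open subsets and disjoint unions. The cover $\{U_i \times X'\}_{i=0}^m$ consists of sets of the first type, since by Proposition \ref{DER} a homotopy $f_{|U_i} \simeq g_{|U_i}$ pulls back along $\pi_1$ to a homotopy $(f\circ\pi_1)_{|U_i\times X'} \simeq (g\circ\pi_1)_{|U_i\times X'}$; similarly $\{X \times V_j\}_{j=0}^n$ consists of $m+1$... of $n+1$ sets of the second type. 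Lemma \ref{JOHNJEFF} then produces a single cover $\{W_0,\dots,W_{m+n}\}$ of $X \times X'$ such that on each $W_k$ we have both $(f\circ\pi_1)_{|W_k} \simeq (g\circ\pi_1)_{|W_k}$ and $(f'\circ\pi_2)_{|W_k} \simeq (g'\circ\pi_2)_{|W_k}$. Combining these two homotopies coordinatewise gives $(f\times f')_{|W_k} \simeq (g\times g')_{|W_k}$, hence $\D(f\times f', g\times g') \leq m+n$, as claimed.

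I expect the main obstacle to be the careful verification that the two properties genuinely fall under the umbrella of Lemma \ref{JOHNJEFF} — in particular, that ``$(f\circ\pi_1)_{|W}$ is homotopic to $(g\circ\pi_1)_{|W}$'' is inherited by disjoint unions of open sets, which is essentially immediate (homotopies can be defined piecewise on a disjoint union), and by open subsets, which is immediate by restriction. One should also double-check the harmless point that a homotopy on $U_i$ composed with $\pi_1$ restricted to $U_i \times X'$ really does land where it should; this is exactly the content of Proposition \ref{DER} applied with $h = \pi_1$, so no new work is needed. With those checks in place the argument is a direct application of the Oprea--Strom lemma, entirely parallel in spirit to the proof of Proposition \ref{TRIANG}.
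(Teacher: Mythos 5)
Your proof is correct and essentially identical to the paper's: the paper also applies the Oprea--Strom lemma to the covers $\{U_i\times X^\prime\}$ and $\{X\times V_j\}$, using the two properties ``$f\times\id_{X^\prime}\simeq g\times\id_{X^\prime}$ on $W$'' and ``$\id_X\times f^\prime\simeq \id_X\times g^\prime$ on $W$'' (equivalent to your formulations via $\pi_1,\pi_2$), and then combines the two homotopies coordinatewise on each $W_k$.
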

It is possible to give a proof identical to that given in \cite[Section 1.5]{CORNEA} for the particular case of LS-category, just by replacing the  notion of categorical sequence by a similar notion of {\em homotopical sequence}.
However, a much simpler proof
follows from  Lemma \ref{JOHNJEFF}.

\begin{proof}[of Theorem \ref{DISTPRODUCT}]
For an open set $\Omega\subset X\times X^\prime$ consider the following Property (A): there is a homotopy $f\times \id_{X^\prime}\simeq g\times \id_{X^\prime}$ on $\Omega$. Clearly this property is inherited by open subsets and disjoint unions.

Analogously, the open set $\Omega^\prime\subset X\times X^\prime$ will satisfy Property (B)
if there is a homotopy $\id_X\times f^\prime\simeq \id_X\times g^\prime$ on $\Omega^\prime$. 

Now, suppose that $\D(f,g)=m$ and $\D(f',g')=n$, and take open coverings $\{U_0,\dots,U_m\}$ of $X$ and $\{V_0,\dots, V_n\}$ of $X^\prime$ such that $f\simeq g$ on each $U_i$, $i=0,\dots,m$, and $f^\prime\simeq g^\prime$ on each $V_j$, $j=0\dots n$. 

Then $\mathcal{U}=\{U_i\times X^\prime\}$ is a covering of $X\times X^\prime$ verifiying Property (A) and $\mathcal{V}=\{X\times V_j\}$ is a covering verifying Property (B). By Lemma \ref{JOHNJEFF}, there is a third covering $\mathcal{W}=\{W_0,\dots,W_{m+n}\}$ of $X\times X^\prime$ such that each $W=W_k$, $k=0\dots,m+n$, verifies both properties, namely,
$f\times \id_{X^\prime}\simeq g\times \id_{X^\prime}$ on $W$ (by a certain homotopy $\mathcal{H}\colon \colon W\times I \to Y\times Y^\prime$), and $\id_X\times f^\prime\simeq \id_X\times g^\prime$ (by a certain homotopy $\mathcal{H}^\prime$). Consider the homotopy
$$\mathcal{H}^{\prime\prime}=(p_Y\circ \mathcal{H},p_{Y^\prime}\circ \mathcal{H^\prime})\colon W\times I \to Y \times Y^\prime.$$
We have 
$$
\mathcal{H}^{\prime\prime}_0(z)=\big(p_Y\mathcal{H}_0(z),p_{Y^\prime} \mathcal{H}^\prime_0(z)\big)=\big(fp_X(z),f^\prime p_{X^\prime}(z)\big)
=(f\times f^\prime)(z).
$$
Analogously
$\mathcal{H}^{\prime\prime}_1 =(g\times g^\prime)$,
thus showing that $f\times f^\prime \simeq g\times g^\prime$ on $W$.

We have then proved  that $\D(f\times f^\prime \simeq g\times g^\prime)\leq m+n$, as stated.
\end{proof}

\begin{example}\label{PRODLS}
Set $f\colon X\to X$ and $f'\colon X'\to X'$ to be the identity maps and $g\colon X\to X$ and $g'\colon X'\to Y'$ to be constant maps. Then $$\cat(X\times X')\leq \cat(X) + \cat(X').$$
\end{example}

\begin{example}\label{PRODTC}
Set $f\colon X \times X\to X$ and $f'\colon X' \times X' \to X'$ to be the projection maps onto the first factor and $g\colon X \times X\to X$ and $g'\colon X' \times X' \to X'$ to be the projection maps onto the second factor. Then $$\TC(X\times X')\leq \TC(X) + \TC(X').$$
\end{example}

\subsection{Finite topological spaces}\label{FINITESPACES}
It has been shown (Prop. \ref{TRIANG}) that homotopic distance satisfies the triangular inequality under the assumption that the domains of the  maps involved are normal spaces. This subsection is devoted to showing this does not hold  for non-discrete finite topological spaces, despite they are endowed with a rich combinatorial structure which in some contexts offsets the lack of separation conditions.  

We  recall some basic facts about finite topological spaces; for a detailed exposition we refer the reader to \cite{BARMAK}. Finite posets and finite $T_0$-spaces are in bijective correspondence.  If $(X, \leq)$ is a poset, a basis for a topology on $X$ is given by the sets $$U_x=\{y\in X: y\leq x\}, \quad x\in X.$$   Conversely, if $X$ is a finite $T_0$-space, define, for each $x\in X$, the {\it minimal open set} $U_x$ as the intersection of all open sets containing $x$.  Then $X$ may be given a poset structure by defining $y\leq x$ if and only if $U_y\subset U_x$. Given two finite spaces $X$ and $Y$, the product topology is given  by the basic open sets 
$$U_{(x,y)}=U_x\times U_y, \quad (x,y)\in X\times Y.$$

\begin{example} \label{TRIANG_FAILS}
Let $S$ be the finite space corresponding to the poset depicted in Figure \ref{fig_S1}. 

\begin{figure}[htbp]
	\centering
	\includegraphics[scale=0.6]{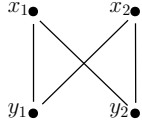}
	\caption{The finite topological space $S$.}
	\label{fig_S1}
\end{figure}

Consider the finite space $X=S\times S$ and the continuous maps $f,g,h\colon X\to X$ given by $f=\id_X$, $g=\id_S\times c$ and $h=c\times c$ where $c\colon S\to S$ is a constant map. Recall from \cite{BARMAK} that for any finite space $Y$ and $y\in Y$, the subspace $U_y$ is contractible. Therefore  $\{S\times U_{x_1}, S\times U_{x_2}\}$ is an open cover of $X$ such that the restrictions of $f$ and $g$ to each of the members of the cover are homotopic. This proves that $\D(f,g)\leq 1$. A symmetrical argument shows that $\D(g,h)\leq 1$. However $\D(f,h)=\cat(X)\geq 3$ \cite[Example 3.5]{TANAKA}.  Therefore, the maps $f,g$ and $h$ do not satisfy the triangular inequality. 
\end{example}

\section{$H$-spaces}\label{HSP}
A well known result from Farber \cite[Lemma 8.2]{FARBER2} states that for a Lie group $G$ the topological complexity $\TC(G)$ equals the LS-category $\cat(G)$. This result 
was later extended to all $H$-spaces by Lupton and Scherer \cite{LS}.

Here, an $H$-space is a topological space $G$ endowed with a {\em multiplication} $\mu\colon G\times G \to G$, a {\em division } $\delta \colon G \times G \to G$ and an identity element $x_0\in G$ such that 
$\mu(p_1,\delta)\simeq p_2$ and $\mu(-,x_0)\simeq \id_G$. Note that we do not ask  the multiplication to be associative.

This definition   is inspired by the discussion in \cite[proof of Theorem 1] {LS} of the results of James \cite{JAMES}. As an example, 
let $G$ be a Lie group, with multiplication $\mu(x,y)=xy$ and division $\delta(x,y)=x^{-1}y$.

Farber and Lupton-Scherer  results are particular cases of the following theorem.

\begin{theorem}\label{DISTANCEHSPACE}Let $G$ be a path-connected $H$-space and let $f,g \colon G\times G \to G$ be two maps. Then $\D(f,g)\leq \cat(G).$\end{theorem}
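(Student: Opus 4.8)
The plan is to reduce an arbitrary pair $f,g\colon G\times G\to G$ to the canonical pair of projections $p_1,p_2$ by exploiting the $H$-space structure, and then invoke the already-established identification $\D(p_1,p_2)=\TC(G)$ together with the inequalities relating $\TC$ and $\cat$ on the codomain side. Concretely, first I would observe that, because $G$ is an $H$-space, any two maps into $G$ that "differ by a division" are controlled: given $f,g\colon G\times G\to G$, consider the map $(f,g)\colon G\times G\to G\times G$ and the composite $\delta\circ(f,g)\colon G\times G\to G$. The idea is that $f$ and $g$ are homotopic on an open set $U$ precisely when $\delta\circ(f,g)$ is null-homotopic there (using $\mu(p_1,\delta)\simeq p_2$ and $\mu(-,x_0)\simeq\id_G$ to recover $g\simeq\mu(f,\delta(f,g))$ and, on $U$ where $\delta(f,g)\simeq x_0$, $\mu(f,x_0)\simeq f$). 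This would give $\D(f,g)=\cat(\delta\circ(f,g))=\D(\delta\circ(f,g),x_0)$.

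Having rewritten the distance as an LS-category of a single map $h:=\delta\circ(f,g)\colon G\times G\to G$, the next step is to bound $\cat(h)$ by $\cat(G)$. This does not follow from Corollary~\ref{cor_d_upper_bound_domain} (the domain is $G\times G$, whose category is too big) nor directly from the codomain corollary in the naive way; instead the $H$-structure must be used again. I would factor $h$ up to homotopy through a map whose category is visibly at most $\cat(G)$: writing $h=\mu\circ(h\circ i_1\circ p_1,\,?)$-type decompositions, or more cleanly, using that $h$ restricted along a diagonal or an axis can be straightened. The cleanest route is: $h\colon G\times G\to G$ satisfies, on any categorical open set $W\subset G$ pulled back suitably, that $h$ becomes null; the point is to produce an open cover of $G\times G$ of size $\cat(G)+1$ on which $h$ is null-homotopic. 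One does this by pulling back a categorical cover of $G$ along a map $G\times G\to G$ through which $h$ factors up to homotopy — the natural candidate being a map built from $\mu$ and $\delta$ — so that nullity downstairs forces nullity upstairs via Proposition~\ref{DER}.

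The technical heart, and the step I expect to be the main obstacle, is exhibiting the right factorization showing $h\simeq (\text{something})\circ(\text{map to }G)$ with the "something" being (homotopically) the identity or a null-controlled map, so that $\cat(h)\le\cat(G)$. This is exactly the $H$-space trick in Lupton--Scherer: one shows that the map $(f,g)\colon G\times G\to G\times G$ is homotopic to $(p_1,\mu(p_1,\delta\circ(f,g)))$ composed appropriately, and then that the relevant obstruction lives over a single copy of $G$. I would model this on the proof that $\TC(G)\le\cat(G)$: there one replaces the path-fibration section problem by a lift against $P_0G\to G$ after translating by group multiplication. Here, an open $U\subset G\times G$ with $h|_U\simeq x_0$ is produced from a categorical $V\subset G$ via $U=\varphi^{-1}(V)$ for the appropriate $\varphi\colon G\times G\to G$, and on such $U$ the homotopy $f|_U\simeq g|_U$ is assembled from the contraction of $V$ together with the structural homotopies $\mu(p_1,\delta)\simeq p_2$, $\mu(-,x_0)\simeq\id_G$. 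Assembling this composite homotopy explicitly — concatenating the $H$-space homotopies with the pulled-back null-homotopy in the correct order so that endpoints match $f|_U$ and $g|_U$ — is the routine-but-delicate part; everything else is then a direct count giving $\D(f,g)\le\cat(G)$.
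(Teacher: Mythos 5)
Your overall route is the paper's: pull a categorical cover of $G$ back along $\delta\circ(f,g)\colon G\times G\to G$ and use the structural homotopies $\mu(p_1,\delta)\simeq p_2$ and $\mu(-,x_0)\simeq\id_G$ to turn nullity of $\delta\circ(f,g)$ on each piece into a homotopy $f\simeq g$ there; the chain $g=p_2\circ(f,g)\simeq\mu(f,\delta\circ(f,g))\simeq\mu(f,x_0)\simeq f$ on $\Omega=(\delta\circ(f,g))^{-1}(U)$ is exactly the paper's computation. However, the step you single out as the ``technical heart'' and ``main obstacle'' is not an obstacle at all: the map $\varphi$ you are looking for is $h=\delta\circ(f,g)$ itself, and $\cat(h)\leq\cat(G)$ is immediate --- it is precisely the corollary following Proposition~\ref{DER} (the category of a map is bounded by the category of its path-connected codomain), or, even more directly, $h$ restricted to $h^{-1}(U)$ factors through the categorical set $U$ on the nose, hence is null-homotopic there. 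No homotopy factorization of $h$ through some auxiliary map ``built from $\mu$ and $\delta$'' is needed, and your worry that the codomain corollary does not apply confuses the (true) bound $\cat(h)\leq\cat(G)$ for a single map $h$ with the (generally false) bound $\D(f,g)\leq\cat(G)$ that the paper explicitly warns against after Proposition~\ref{CODOMAIN}. One smaller point: your claimed equivalence ``$f\simeq g$ on $U$ precisely when $\delta\circ(f,g)$ is null-homotopic on $U$'' is an overstatement, since the stated axioms give only the implication from nullity of $\delta\circ(f,g)$ to $f\simeq g$ (nothing forces $\delta\circ\Delta\simeq x_0$ without some cancellation property); but only that implication is used, so the inequality $\D(f,g)\leq\cat(\delta\circ(f,g))\leq\cat(G)$ survives intact and your sketch, so corrected, is the paper's proof.
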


In fact we know that $\D(f,g)\leq \TC(G)$ (Proposition \ref{CODOMAIN}), so the latter Theorem is equivalent to Lupton-Scherer result. For the sake of completeness we shall give a direct proof.

\begin{proof}Let $U\subset G$ be a categorical open set, that is, $i_U\simeq x_0$, and consider the preimage $\Omega\subset G \times G$ of $U$ by the map $\delta\circ (f,g)\colon G \times G \to G$. Then 
	$$p_2\circ (f,g)\circ i_\Omega \simeq \mu\circ (p_1,\delta)\circ (f,g)\circ i_\Omega,$$
	that is,
	$$g_{\vert \Omega}\simeq \mu\circ (f_{\vert \Omega},\delta\circ (f,g)\circ i_\Omega).$$
	But $\delta\circ (f,g)\circ i_\Omega$ factors through $i_U$, by the definition of $\Omega$, so it is homotopic to the constant map $(x_0)_{\vert \Omega}\colon \Omega \to G$. Then
	$$g_{\vert \Omega}\simeq \mu\circ (f_{\vert \Omega},x_0) \simeq  f_{\vert \Omega}.$$
\end{proof}

\begin{corollary}[\cite{LS}] \label{TCCAT}
	For a path-connected $H$-space $G$ we have $\TC(G)=\cat(G)$.\end{corollary}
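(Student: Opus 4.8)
The plan is to derive the equality $\TC(G)=\cat(G)$ for a path-connected $H$-space $G$ directly from Theorem~\ref{DISTANCEHSPACE} together with the two already-established inequalities relating $\cat$ and $\TC$. First I would recall from Proposition~\ref{PROJECT} (equivalently Theorem~\ref{DISTSECAT}) that $\TC(G)=\D(p_1,p_2)$, where $p_1,p_2\colon G\times G\to G$ are the two projections. Applying Theorem~\ref{DISTANCEHSPACE} to the particular maps $f=p_1$ and $g=p_2$ immediately yields
$$\TC(G)=\D(p_1,p_2)\leq \cat(G).$$
This is the non-trivial half of the statement, and it rides entirely on the $H$-space structure being used inside the proof of Theorem~\ref{DISTANCEHSPACE}.

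For the reverse inequality I would invoke Corollary~\ref{TCCAT1}, namely $\cat(X)\leq \TC(X)$, which holds for any path-connected space and in particular for $G$. Combining the two inequalities gives $\cat(G)\leq \TC(G)\leq \cat(G)$, hence $\TC(G)=\cat(G)$, as claimed.

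I do not expect any genuine obstacle here: all the work has already been done in Theorem~\ref{DISTANCEHSPACE} and Corollary~\ref{TCCAT1}, so the corollary is essentially a two-line deduction. The only point requiring a little care is making sure the hypotheses line up --- $G$ must be path-connected so that $\cat(G)$ is defined and so that Corollary~\ref{TCCAT1} applies, and one uses the identification $\TC(G)=\D(p_1,p_2)$ to feed the projections into Theorem~\ref{DISTANCEHSPACE}. If one wished to be fully explicit one could alternatively sandwich $\TC(G)$ between $\cat(G)$ (from Corollary~\ref{TCCAT1}) and $\cat(G)$ (from Theorem~\ref{DISTANCEHSPACE}) without mentioning Proposition~\ref{CODOMAIN}, keeping the argument self-contained.
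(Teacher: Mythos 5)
Your proof is correct and is essentially identical to the paper's: apply Theorem~\ref{DISTANCEHSPACE} with $f=p_1$, $g=p_2$ and the identification $\TC(G)=\D(p_1,p_2)$ from Proposition~\ref{PROJECT} to get $\TC(G)\leq\cat(G)$, then quote Corollary~\ref{TCCAT1} for the reverse inequality.
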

\begin{proof}Take $f=p_1$ and $g=p_2$ and apply Theorem \ref{PROJECT}. Then $\TC(G)\leq \cat(G)$. The other inequality was proven in Corollary \ref{TCCAT1}.\end{proof}

In fact, we have 
\begin{proposition}\label{MULTDIV}In a Lie group, the distance between the multiplication and the division equals the distance between the identity $\id_G$ and the inversion map $I\colon G \to G$, $I(x)=x^{-1}$, that is,
	$\D(\mu,\delta)=\D(\id_G,I)$.
\end{proposition}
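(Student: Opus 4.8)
The statement to prove is that in a Lie group $G$, the homotopic distance $\D(\mu,\delta)$ between multiplication $\mu(x,y)=xy$ and division $\delta(x,y)=x^{-1}y$ equals $\D(\id_G,I)$, where $I(x)=x^{-1}$. The natural approach is to exhibit $\mu$ and $\delta$ as precompositions of $\id_G$ and $I$ (up to homotopy) by a common homotopy equivalence, and then invoke Proposition~\ref{EQUIV2}.

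First I would identify the right change of coordinates. Consider the homeomorphism $\varphi\colon G\times G \to G\times G$ given by $\varphi(x,y)=(xy,y)$, which has inverse $\varphi^{-1}(u,v)=(uv^{-1},v)$; in particular $\varphi$ is a homotopy equivalence (indeed a homeomorphism). Then $p_1\circ\varphi = \mu$, so $\mu = p_1\circ\varphi$, and on the other hand $\delta(x,y) = x^{-1}y = (xy)^{-1}\cdot (y\cdot y) $ — this is getting awkward, so instead I would aim to write both $\mu$ and $\delta$ as $(\text{something})\circ\varphi$ where the ``something'' is essentially $\id_G$ or $I$ composed with a projection. The cleanest route: note $\delta(x,y) = x^{-1}y$ and with $u = xy$, $v=y$ we get $x = uv^{-1}$, so $\delta\circ\varphi^{-1}(u,v) = (uv^{-1})^{-1}v = v u^{-1} v$. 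That still mixes both variables, which suggests $\varphi(x,y)=(xy,y)$ is not quite the right substitution. A better choice is $\psi\colon G\times G\to G\times G$, $\psi(x,y) = (xy, x)$ with inverse $\psi^{-1}(u,v) = (v, v^{-1}u)$; then $\mu = p_1\circ\psi$ while $\delta\circ\psi^{-1}(u,v) = \delta(v,v^{-1}u) = v^{-1}\cdot v^{-1}u$, still not clean. The genuinely correct observation is simpler and avoids forcing everything through one map: use Proposition~\ref{IGU} or direct manipulation. Let $h(x,y) = (xy,y)$ and $h'(x,y)=(x,y)=\id$; these need not be homotopic. Instead I will use the following: by Proposition~\ref{EQUIV2} applied to $\beta = \varphi$ (a homeomorphism, hence with right homotopy inverse), $\D(f\circ\varphi, g\circ\varphi) = \D(f,g)$ for any $f,g\colon G\times G\to G$. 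So it suffices to find a single homeomorphism $\varphi$ of $G\times G$ with $\mu\circ\varphi \simeq F$ and $\delta\circ\varphi\simeq H$ where $\D(F,H) = \D(\id_G, I)$, the latter because $F,H$ are, up to homotopy, $\id_G$ and $I$ composed with a projection — and then apply Proposition~\ref{EQUIV2} once more (on the other side, via Proposition~\ref{EQUIV1} is not needed) together with the fact (Proposition~\ref{EQUIV2}) that precomposing $\id_G$ and $I$ with the projection $p_1\colon G\times G\to G$, which has a right homotopy inverse $x\mapsto (x,x_0)$, does not change the distance: $\D(\id_G\circ p_1, I\circ p_1) = \D(\id_G, I)$.

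Concretely, take $\varphi(x,y) = (x^{-1}y^{-1}, y)$, or rather I would reverse-engineer it: I want $\mu\circ\varphi = \id_G\circ p_1 = p_1$ and $\delta\circ\varphi = I\circ p_1$, i.e.\ $\mu(\varphi(x,y)) = x$ and $\delta(\varphi(x,y)) = x^{-1}$. Writing $\varphi(x,y) = (a,b)$ we need $ab = x$ and $a^{-1}b = x^{-1}$; dividing, $b^{-1}a\cdot ab = $ does not immediately solve, but from $ab = x$ and $a^{-1}b = x^{-1}$ we get $a(ab) = ax = a \cdot ab$, hmm — multiply the two: $(ab)(a^{-1}b) = x x^{-1} = e$, so $aba^{-1}b = e$; this is one equation in two unknowns plus $ab=x$, giving a one-parameter family of solutions. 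Choosing $a = x$, $b = e$: then $ab = x$ ✓ and $a^{-1}b = x^{-1}$ ✓. So set $\varphi(x,y) = (x, x_0)$ — but that is not a homeomorphism of $G\times G$. Instead choose $b$ depending on $y$: pick $\varphi(x,y) = (xy^{-1}\cdot z, \ldots)$ — the obstruction is that $\mu$ and $\delta$ both kill the ``$y$ mod the relation'' direction, so no homeomorphism $\varphi$ can make both compositions equal genuinely constant-in-$y$ maps on the nose. The fix: use Proposition~\ref{EQUIV2} with $\beta\colon G\to G\times G$, $\beta(x) = (x, x_0)$, which has right homotopy inverse $p_1$. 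Then $\D(\mu,\delta) \overset{?}{=} \D(\mu\circ\beta, \delta\circ\beta)$ — but $\beta$ has a \emph{left} homotopy inverse ($p_1\circ\beta = \id_G$), not a right one, so this uses the logic of Proposition~\ref{EQUIV2} in the form: $\beta$ has right homotopy inverse means $\exists\gamma$ with $\beta\gamma\simeq\id$. Here $p_1$ has right homotopy inverse $\beta$, so Proposition~\ref{EQUIV2} gives $\D(\mu\circ p_1', \delta\circ p_1')$... I am going in circles, so let me state the clean plan.

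\textbf{Clean plan.} Step 1: observe $\mu\circ\beta = \id_G$ and $\delta\circ\beta = I$, where $\beta\colon G\to G\times G$, $\beta(x)=(x,x_0)$. Step 2: by Proposition~\ref{DER} (precomposition), $\D(\id_G, I) = \D(\mu\circ\beta, \delta\circ\beta) \leq \D(\mu,\delta)$. Step 3: for the reverse inequality, note $\beta$ has left homotopy inverse $p_1$ (since $p_1\circ\beta = \id_G$). Define $\tilde\beta\colon G\times G\to G\times G$... the right tool is to find a homotopy equivalence $\Phi$ of $G\times G$ with $\mu\simeq\mu\circ\beta\circ p_1\circ\Phi = \ldots$; more directly, since $\beta\circ p_1\colon G\times G\to G\times G$ sends $(x,y)\mapsto(x,x_0)$, and $\Phi(x,y) = (x, y)$ with a twist — actually the honest reverse inequality comes from: $\mu$ and $\mu\circ\beta\circ p_1 = p_1$ differ by... no, $\mu\ne p_1$. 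The real reverse inequality: consider the homeomorphism $R\colon G\times G\to G\times G$, $R(x,y) = (xy,y)$, inverse $R^{-1}(u,v)=(uv^{-1},v)$. Then $\mu\circ R^{-1}(u,v) = (uv^{-1})v = u$, so $\mu\circ R^{-1} = p_1$, and $\delta\circ R^{-1}(u,v) = (uv^{-1})^{-1}v = v u^{-1} v$. Hmm $\delta\circ R^{-1}\ne I\circ p_1$. But $\delta\circ R^{-1}(u,v) = vu^{-1}v$; is this homotopic to $I\circ p_1(u,v) = u^{-1}$? Not obviously. So $R$ is wrong. The \emph{correct} homeomorphism is $R(x,y) = (xy, x)$: then $R^{-1}(u,v) = (v, v^{-1}u)$, giving $\mu\circ R^{-1}(u,v) = v(v^{-1}u) = u = p_1(u,v)$ ✓, and $\delta\circ R^{-1}(u,v) = v^{-1}(v^{-1}u) = v^{-2}u$ — again mixed. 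Alternatively $R(x,y) = (xy^{-1}, y)$: $R^{-1}(u,v) = (uv, v)$, $\mu\circ R^{-1} = uv\cdot v = uv^2$, no. It seems multiplication genuinely resists being straightened to a projection by a global homeomorphism in a way compatible with division simultaneously, which is exactly why the author states this as a \emph{Proposition} rather than a triviality. \textbf{I expect the main obstacle to be Step 3, the inequality $\D(\mu,\delta)\leq\D(\id_G,I)$}: the right approach is Proposition~\ref{IGU} with $Z = G\times G$, $X = G$, $h = \mu$ or a related map, $h' = \mu\circ\beta\circ p_1$ (or the analogous $\delta$-side), using that $\mu\circ(\text{section}) \simeq \id$ type relations hold via the $H$-space structure — so the proof should open a covering of $G$ witnessing $\D(\id_G,I)$, pull it back along $p_1\colon G\times G\to G$ to a covering of $G\times G$, and on each piece build the homotopy $\mu\simeq\delta$ from the given homotopy $\id_G\simeq I$ by inserting the homotopy $\mu(-,x_0)\simeq\id_G$ at both ends, in the style of the proof of Proposition~\ref{IGU}. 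I would then verify that on $p_1^{-1}(U_j)$, where $\id_{G}\simeq I$ on $U_j$, one has $\mu_{|}\simeq \mu(p_1,x_0)_{|} = (p_1)_{|} \simeq I(p_1)_{|} \simeq \delta(p_1,x_0)_{|}\simeq \delta_{|}$, chaining four homotopies, the inner one being the given $\id_G\simeq I$ restricted to $U_j$ and the outer ones coming from $\mu(-,x_0)\simeq\id_G$ and its division analogue $\delta(-,x_0)\simeq I$ (which holds since $\delta(x,x_0) = x^{-1}x_0^{-1}\cdot\ldots$ — for a Lie group $\delta(x,x_0)=x^{-1}x_0 = x^{-1} = I(x)$ exactly). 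This chaining is routine once set up; the only care needed is that the middle step uses $\mu(p_1, x_0) \simeq \mu(p_1, I\circ p_1\circ\beta\ldots)$ — i.e.\ one inserts the $U_j$-local homotopy into the \emph{second} slot is not right since the second slot is $x_0$; rather one inserts it after identifying $\mu(-,x_0)|_{p_1^{-1}U_j}\simeq p_1|\simeq I\circ p_1|\simeq \delta(-,x_0)|$, each $\simeq$ being global except the middle one which is the given local homotopy precomposed with $p_1$. That completes the plan; combining Steps 2 and 3 yields $\D(\mu,\delta)=\D(\id_G,I)$.
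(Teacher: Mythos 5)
Your Step 2 is exactly the paper's argument: $\mu\circ i_1=\id_G$ and $\delta\circ i_1=I$ for $i_1(x)=(x,x_0)$, so Proposition~\ref{DER} gives $\D(\id_G,I)\le\D(\mu,\delta)$. You also correctly identify the covering $\{U_j\times G\}=\{p_1^{-1}(U_j)\}$ for the reverse inequality. But the homotopy you build on each piece is broken. Your chain is
$\mu|\simeq \mu(p_1,x_0)|=p_1|\simeq I\circ p_1|=\delta(p_1,x_0)|\simeq\delta|$,
and you call the outer two links ``global,'' justified by $\mu(-,x_0)\simeq\id_G$. That axiom concerns the one-variable map $x\mapsto\mu(x,x_0)$; it does not make the two-variable map $\mu\colon U_j\times G\to G$ homotopic to $\mu\circ(p_1,x_0)=p_1$. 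Indeed, restrict both to a slice $\{x\}\times G$ with $x\in U_j$: $\mu$ becomes the left translation $y\mapsto xy$ (homotopic to $\id_G$ when $G$ is path-connected), while $p_1$ becomes a constant map, so the first link would force $\id_G\simeq\ast$, i.e.\ $G$ contractible. The last link $\delta(p_1,x_0)=I\circ p_1\simeq\delta$ fails for the same reason. In effect your chain tries to contract the second coordinate $y$ to $x_0$, which is exactly what cannot be done.

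The paper's fix is to deform only the \emph{first} argument and leave $y$ alone. Write $\mu=\mu\circ(\id_G\times\id_G)$ and $\delta=\mu\circ(I\times\id_G)$, i.e.\ $\mu(x,y)=xy$ and $\delta(x,y)=x^{-1}y=\mu(I(x),y)$. If $\mathcal{H}\colon U_j\times[0,1]\to G$ is the homotopy with $\mathcal{H}_0=i_{U_j}$ and $\mathcal{H}_1=I|_{U_j}$, then $K(x,y,t)=\mu(\mathcal{H}(x,t),y)=\mathcal{H}(x,t)\cdot y$ is a single homotopy on $U_j\times G$ from $\mu$ to $\delta$; no chaining and no insertion of $x_0$ is needed. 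With that replacement your argument closes, giving $\D(\mu,\delta)\le\D(\id_G,I)$ and hence equality.
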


\begin{proof}Let $x_0=e$ be the identity, and consider the map $i_1(x)=(x,x_0)$. Then $\mu\circ i_1=\id_G$ and $\delta\circ i_1=I$. From Proposition \ref{DER} it follows that
	$\D(\id_G, I)\leq \D(\mu,\delta).$
	
	On the other hand, we have $\mu=\mu\circ (\id_G,\id_G)$ and $\delta=\mu\circ(I,\id_G)$. If $U\subset G$ is an open set where $i_U\simeq I_{\vert U}$, then $\mu_{\vert U\times G}\simeq \delta_{\vert U\times G}$, so $U\times G$ is a homotopy domain for $\mu$ and $\delta$. Since $G=\bigcup_{i=0}^n U_j$ implies $G\times G =\bigcup_{i=0}^n (U_j\times G)$, it follows that $\D(\mu,\delta)\leq \D(\id_G,I)$.\end{proof}

Note that Corollary \ref{TCCAT} and Proposition \ref{CODOMAIN} imply that $D(\mu,\delta)\leq \cat(G)$.

\begin{proposition}
	In any Lie group, the distance $\D(\mu_a,\mu_b)$ between two power maps $\mu_a,\mu_b\colon G \to G$, where $\mu_c(x)=x^c$, equals $\D(\mu_{a-b},e)$. 
\end{proposition}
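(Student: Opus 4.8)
The plan is to mimic the proof of Proposition~\ref{MULTDIV}, producing explicit homotopies on the members of a covering. Everything rests on a single algebraic observation: for any $x$ in a group and any integers $a,b$ one has $x^a x^{-b}=x^{a-b}$ and $x^{a-b}x^{b}=x^{a}$, since $x^{a}$, $x^{b}$ and $x^{a-b}$ all lie in the cyclic subgroup generated by $x$ and hence commute. In particular $G$ need \emph{not} be abelian. Recall also that each $\mu_c\colon G\to G$, $\mu_c(x)=x^{c}$, is continuous, being obtained from the group multiplication and the inversion map.

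First I would prove $\D(\mu_{a-b},e)\leq\D(\mu_a,\mu_b)$. Assume $\D(\mu_a,\mu_b)\leq n$ and choose an open covering $\{U_0,\dots,U_n\}$ of $G$ with homotopies $\mathcal H_j\colon U_j\times[0,1]\to G$, $\mathcal H_j(x,0)=x^{a}$, $\mathcal H_j(x,1)=x^{b}$. The map $\mathcal K_j(x,t)=\mathcal H_j(x,t)\cdot x^{-b}$ is continuous and satisfies $\mathcal K_j(x,0)=x^{a}x^{-b}=x^{a-b}=\mu_{a-b}(x)$ and $\mathcal K_j(x,1)=x^{b}x^{-b}=e$, so $(\mu_{a-b})_{\vert U_j}\simeq e_{\vert U_j}$ for all $j$ and therefore $\D(\mu_{a-b},e)\leq n$. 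For the converse, assume $\D(\mu_{a-b},e)\leq n$, take a covering $\{U_0,\dots,U_n\}$ with homotopies $\mathcal L_j\colon U_j\times[0,1]\to G$, $\mathcal L_j(x,0)=x^{a-b}$, $\mathcal L_j(x,1)=e$, and set $\mathcal M_j(x,t)=\mathcal L_j(x,t)\cdot x^{b}$. Then $\mathcal M_j(x,0)=x^{a-b}x^{b}=x^{a}$ and $\mathcal M_j(x,1)=e\cdot x^{b}=x^{b}$, which shows $(\mu_a)_{\vert U_j}\simeq(\mu_b)_{\vert U_j}$ and hence $\D(\mu_a,\mu_b)\leq n$. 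Combining the two inequalities gives the claimed equality.

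A shorter but essentially equivalent route uses Proposition~\ref{IZQ}: writing $m(y,z)=yz^{-1}$ and letting $\mu$ be the group multiplication, one has $\mu_{a-b}=m\circ(\mu_a,\mu_b)$, $e=m\circ(\mu_b,\mu_b)$, $\mu_a=\mu\circ(\mu_{a-b},\mu_b)$ and $\mu_b=\mu\circ(x_0,\mu_b)$; then Proposition~\ref{IZQ} together with the trivial fact that changing one coordinate of a product of maps never increases the homotopic distance yields both inequalities. Either way, the only point requiring any care---and it is a minor one---is precisely the commutativity of the powers of a fixed element, which is what makes the identity $x^a x^{-b}=x^{a-b}$ valid in an arbitrary Lie group.
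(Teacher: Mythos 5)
Your proof is correct and is essentially the paper's own argument: the paper obtains the same two inequalities by composing the given homotopy with the multiplication, writing $\mu_{a-b}=\mu\circ(\mu_a,\mu_{-b})\simeq\mu\circ(\mu_b,\mu_{-b})=e$ and $\mu_a=\mu\circ(\mu_{a-b},\mu_b)\simeq\mu\circ(e,\mu_b)=\mu_b$ on each open set, which is exactly your explicit homotopies $\mathcal H_j(x,t)\cdot x^{-b}$ and $\mathcal L_j(x,t)\cdot x^{b}$ in functional notation. Your remark that the identity $x^ax^{-b}=x^{a-b}$ needs no commutativity of $G$ (only of powers of a fixed element, and only at the endpoints of the homotopy) is the same implicit point the paper relies on.
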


\begin{proof}If $\mu_a\simeq\mu_b$ on the open set $U\subset G$, then
	$\mu_{a-b}=\mu\circ (\mu_a,\mu_{-b}) \simeq \mu\circ (\mu_b,\mu_{-b})=e$ on $U$,
	where $\mu\colon G \times G \to G$ is the multiplication. Then $D(\mu_{a-b},e)\leq D(\mu_a,\mu_b)$.
	
	On the other hand, if $\mu_{a-b}\simeq e$ on the open subset $U\subset G$, then
	$\mu_a =\mu\circ (\mu_{a-b},\mu_b)\simeq \mu\circ (e,\mu_b)=\mu_b$ on $U$, hence proving the reverse inequality.
\end{proof}

\section{Cohomology}\label{COHOMOLOGY}
\subsection{Cup length}
For the LS category it is well-known \cite{CORNEA} that 
$$\lcp H(X;R)\leq \cat (X),$$ where $\lcp$ denotes the length of the cup product of the cohomology (with coefficients in any commutative ring $R$ with unit). 

Analogously, Farber \cite{FARBER} proved that $\lcp \ker\Delta^*\leq \TC(X)$.
When the coefficients are a field $K$,  
$\ker\Delta^*$ is isomorphic to the kernel of the cup product
$$ H(X;K)\otimes H(X;K) \stackrel{\smile}{\longrightarrow} H(X;K).$$
We shall give a general cohomological lower bound for the homotopic distance between two maps.

Let $f,g\colon X \to Y$ be two maps and let $f^*,g^*\colon H(Y;R)\to H(X;R)$ be the induced morphisms in cohomology (for an arbitrary unitary commutative ring of coefficients).
We denote by $\mathcal{J}(f,g)\subset H(X;R)$ the image of the linear morphism $f^*-g^*\colon H(Y;R)\to H(X;R)$.

\begin{definition}
	We denote by   $\lcp \mathcal{J}(f,g)$  the least integer $k$ such that any product $u_0\smile\cdots \smile u_k$ of elements of $\mathcal{J}(f,g)$ is null in $H(X)$. 
\end{definition}

Note that we do not ask $\mathcal{J}(f,g)$ to be a ring. Also note that 
$$\lcp \mathcal{J}(f,g)\leq \lcp H(X;R).$$

\begin{theorem}\label{LCP}Let $\mathcal{J}(f,g)\subset H(X;R)$ be  the image of the morphism $f^*-g^*\colon H(Y;R)\to H(X;R)$. Then $\lcp \mathcal{J}(f,g)\leq \D(f,g)$.
\end{theorem}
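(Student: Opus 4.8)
The plan is to mimic the classical cup-length argument for Lusternik–Schnirelmann category, now applied to the open cover witnessing $\D(f,g)\le n$, where $n=\D(f,g)$ (assume this is finite, otherwise there is nothing to prove). Write $X=U_0\cup\cdots\cup U_n$ with $f_{\vert U_j}\simeq g_{\vert U_j}$ for each $j$, and let $i_j\colon U_j\hookrightarrow X$ denote the inclusion. The key observation is that if $u=f^*(v)-g^*(v)\in\mathcal J(f,g)$ for some $v\in H(Y;R)$, then $i_j^*(u)=(f\circ i_j)^*(v)-(g\circ i_j)^*(v)=0$, precisely because $f\circ i_j\simeq g\circ i_j$ and cohomology is a homotopy functor. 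Hence every class in $\mathcal J(f,g)$ restricts to zero on each $U_j$.

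Next I would invoke the standard fact from the long exact sequence of the pair $(X,U_j)$: a class $u\in H(X;R)$ with $i_j^*(u)=0$ lifts to a relative class $\bar u_j\in H(X,U_j;R)$ mapping to $u$ under $H(X,U_j;R)\to H(X;R)$. Given classes $u_0,\dots,u_n\in\mathcal J(f,g)$, choose such lifts $\bar u_j\in H(X,U_j;R)$. The relative cup product gives a class $\bar u_0\smile\cdots\smile\bar u_n\in H\big(X,\,U_0\cup\cdots\cup U_n;R\big)=H(X,X;R)=0$, and naturality of the cup product under the maps $(X,U_j)\to(X,\varnothing)$ shows that this relative product maps to the absolute product $u_0\smile\cdots\smile u_n$ in $H(X;R)$. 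Therefore $u_0\smile\cdots\smile u_n=0$, which says exactly that any $(n+1)$-fold product of elements of $\mathcal J(f,g)$ vanishes, i.e. $\lcp\mathcal J(f,g)\le n=\D(f,g)$.

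The main point requiring care — though it is entirely standard — is the compatibility of the relative cup product $H^*(X,A)\otimes H^*(X,B)\to H^*(X,A\cup B)$ with the restriction-to-absolute maps, and the fact that for an open cover $A\cup B=X$ the target is $H^*(X,X)=0$; one has to make sure the $U_j$ are open (which they are, being a cover) so that excision/the pair sequence behaves, or alternatively phrase everything with the cover $\{U_j\}$ directly. No new ideas beyond the classical Lusternik–Schnirelmann cup-length proof are needed; the only novelty is the remark that homotopy of the restrictions $f\circ i_j\simeq g\circ i_j$ forces $f^*-g^*$ to kill each $i_j^*$, which is what pins the argument to $\mathcal J(f,g)$ rather than to all of $H(X;R)$. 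I would also note in passing that this recovers $\lcp H(X;R)\le\cat(X)$ by taking $g$ constant and $f=\id_X$, and Farber's $\lcp\ker\Delta^*\le\TC(X)$ by taking $f=p_1$, $g=p_2$.
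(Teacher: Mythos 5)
Your proof is correct and follows essentially the same route as the paper's: restrict to the cover witnessing $\D(f,g)\le n$, observe that $f_{\vert U_j}\simeq g_{\vert U_j}$ forces every class of $\mathcal{J}(f,g)$ to die on $U_j$, lift to relative classes via the long exact sequence of $(X,U_j)$, and kill the $(n+1)$-fold product in $H(X,X;R)=0$ using the relative cup product and its compatibility with the absolute one. No discrepancies worth noting.
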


\begin{proof}Assume $\D(f,g)\leq n$ and let $X\times X =U_0\cup\dots\cup U_n$ be a covering  such that the restrictions of $p_1$ and $p_2$ to each open set $U_k$, $k=0,\dots,n$, are homotopic. For $U=U_k$ let us consider the long exact sequence of the pair  $(X, U)$ (from now on we shall not make explicit the ring $R$):
	$$\begin{tikzcd}
	& & {H}^{m}(Y) \arrow{d}[swap]{f^*-g^*} \arrow[dr, "{(f_{\vert U})^*-(g_{\vert U})^*}=0"]\\
	\cdots \arrow[r] & {H}^{m}(X,U) \arrow[r,"{j_U}"] &  {H}^{m}(X)  \arrow[r,"(i_U)^*"] & {H}^{m}(U) \arrow[r] & \cdots
	\end{tikzcd}
	$$
	Then $(f_{\vert U})^*=(g_{\vert U})^*\colon H(Y) \to H(U)$, which implies that every element $\omega$ in $\mathcal{J}$ belongs to $\ker (i_U)^*=\im j_U$, then $\omega=j_U(\tilde\omega)$ for some $\tilde\omega\in H(X,U)$.
	
	Now, let us remember the relative cup product \cite[p. 209]{HATCHER}
	$$\smile\colon H^{m}(X ,U)  \otimes   H^{n}(X, V) \rightarrow H^{m+n}(X , U \cup V),$$
	where $U,V$ are open subsets of $X$. 
	From  \cite[p. 251]{SPANIER} it follows that the following diagram is commutative:
	%
	
	$$
	\begin{tikzcd}
	H(X ,U_0)  \otimes \cdots  \otimes  H(X , U_n) \arrow[r, "\smile"]  \arrow[d, "{j_{U_0}}\otimes \cdots  \otimes {j_{U_n}}"] & H^{*}(X , \bigcup_{k=0}^n U_k ) \arrow[d, "{j_{U_0 \cup \ldots \cup U_n}}"] \\
	H(X)  \otimes \cdots  \otimes H(X ) \arrow[r,"\smile"] &   H(X) 
	\end{tikzcd}\\
	$$
	
	If $ \omega_0 \smile \cdots \smile \omega_n$ 
	is a product of length $n+1$ of elements $\omega_k\in \mathcal{J}$, there exist elements $\widetilde{\omega_k}\in H(X,U_k)$ such that $\omega_k=j_k(\widetilde{\omega_k})$, hence
	$$ \omega_0 \smile \cdots \smile \omega_n=j_0(\widetilde{\omega_0})\smile\cdots\smile j_n(\widetilde{\omega_n})=j_{0...n}(\widetilde{\omega_0}\smile\cdots\smile \widetilde{\omega_n})=0$$
	because $\widetilde{\omega_0}\smile\cdots\smile \widetilde{\omega_n}\in H(X,X)=0$.
	Then $\lcp\mathcal{J}\leq n$.\end{proof}

\begin{example}
	Consider the inclusion maps $i_1,i_2\colon X\to X\times X$ as in Proposition \ref{INCLCAT}.  Then $\mathcal{J}(i_1,i_2)$ is isomorphic to $H(X)$. Therefore, we recover the classical cohomological lower bound for the LS category.
\end{example}

\begin{example} Let $G=U(2)$ be the Lie group of $2\times 2$ complex matrices $A$ such that $A ^{-1}=A^*$. It is known \cite{SINGHOF} that $\cat (G)=2$. In fact, topologically $G$ is the product $S^1\times S^3$, so its real cohomology is $H(G)=H(S^1)\otimes H(S^3)$, the exterior algebra $\bigwedge (x_1,x_3)$. Consider the maps $f=\id_G$ the identity and $g=I$ the inversion $I(A)=A^*$. Then $I^*(x_1)=-x_1$ and $I^*(x_3)=-x_3$, so $\mathcal{J}(f,g)=H(G)$. Then 
	\begin{equation}\label{U2}\nonumber
	2=\lcp \mathcal{J}\leq \D(f,g) \leq \cat(G)=2.
	\end{equation}
	Hence, the distance between the identity and the inversion is $2$, and the distance between the multiplication and the division is $2$ too (Proposition  \ref{MULTDIV}). 
	The same argument applies to the groups $U(n)$, $n\geq 2$, that is, $\D(\id_G,I)=n=\cat(G)$.
\end{example}
\subsection{Homotopy weight}
Following the ideas of Fadell-Husseini
for the LS-category and Farber-Grant for the Topological Complexity \cite{FARBER-GRANT}, we can define a notion of {\em homotopy weight} that serves to improve inequality \eqref{LCP}. Our proofs follow the lines of those in \cite[Section 6]{FGSYM} for the $\TC$-weight, which is a particular case.

Let $f,g\colon X \to Y$ be two maps, and let $u\in H(X;R)$ be a cohomology class. 
\begin{definition}
	We say that $u$ has homotopy weight $\hw(u)= k+1$ (with respect to $f,g$) if $k$ is the greatest integer such that the following condition is satisfied: given any continuous map $\phi\colon A \to X$   with $D(f\circ \phi, g\circ \phi)\leq k$, then $\phi^*u=0\in H(A;R)$. We put $\hw(0)=\infty$. \end{definition}
In other words,  $\hw(u)\geq k+1$ means that $\phi^*u=0\in H(A;R)$ for all maps $\phi\colon A \to X$   with $D(f\circ \phi, g\circ \phi)\leq k$. 

We first prove the homotopy invariance of the homotopy weight.
\begin{proposition}If $\alpha,\beta$ in the following commutative diagram are homotopy equivalences, 
	$$
	\begin{tikzcd}
	X^\prime \arrow[r,shift left, "f^\prime"] \arrow[r, shift right,"g^\prime"']   \arrow[d, "\alpha"] & Y^\prime \arrow[d, "\beta"] \\
	X \arrow[r,shift left, "f"]\arrow[r,shift right,"g"'] &   Y
	\end{tikzcd}
	$$
	then $\hw_{f^\prime,g^\prime}(\alpha^*u)=\hw_{f,g}(u)$ for $u\in H(X)$.
\end{proposition}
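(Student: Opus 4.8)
The plan is to establish the equality by proving the two inequalities $\hw_{f',g'}(\alpha^*u)\geq \hw_{f,g}(u)$ and $\hw_{f,g}(u)\geq \hw_{f',g'}(\alpha^*u)$ separately, and then to observe that the second is just the first applied to the diagram obtained by reversing $\alpha$ and $\beta$.

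First I would prove $\hw_{f',g'}(\alpha^*u)\geq \hw_{f,g}(u)$. Suppose $\hw_{f,g}(u)\geq k+1$, so that $\phi^*u=0$ for every map $\phi\colon A\to X$ with $D(f\circ\phi,g\circ\phi)\leq k$. Take any $\phi'\colon A\to X'$ with $D(f'\circ\phi',g'\circ\phi')\leq k$ and set $\phi=\alpha\circ\phi'$. Using the homotopy-commutativity $f\circ\alpha\simeq\beta\circ f'$ and $g\circ\alpha\simeq\beta\circ g'$ together with Proposition \ref{HOMOT} and Proposition \ref{IZQ}, one gets
$$D(f\circ\phi,g\circ\phi)=D(\beta\circ f'\circ\phi',\beta\circ g'\circ\phi')\leq D(f'\circ\phi',g'\circ\phi')\leq k,$$
hence $\phi^*u=0$. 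Since $\phi^*u=(\phi')^*(\alpha^*u)$, this shows that $\alpha^*u$ has homotopy weight $\geq k+1$ with respect to $(f',g')$, giving the first inequality.

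For the reverse inequality I would pick homotopy inverses $\bar\alpha\colon X\to X'$ of $\alpha$ and $\bar\beta\colon Y\to Y'$ of $\beta$, and check that the square with vertical arrows $\bar\alpha,\bar\beta$ is again homotopy-commutative: from $f\circ\alpha\simeq\beta\circ f'$ one deduces $f'\circ\bar\alpha\simeq\bar\beta\circ f$ by composing on the left with $\bar\beta$ and on the right with $\bar\alpha$ and using $\bar\beta\circ\beta\simeq\id_{Y'}$ and $\alpha\circ\bar\alpha\simeq\id_X$; likewise $g'\circ\bar\alpha\simeq\bar\beta\circ g$. Applying the inequality already proved to this new diagram yields $\hw_{f,g}(\bar\alpha^*v)\geq \hw_{f',g'}(v)$ for all $v\in H(X')$. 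Taking $v=\alpha^*u$ and using $\bar\alpha^*\alpha^*u=(\alpha\circ\bar\alpha)^*u=u$ (homotopic maps induce equal maps in cohomology) gives $\hw_{f,g}(u)\geq\hw_{f',g'}(\alpha^*u)$. The two inequalities combine to the claimed equality; the case $u=0$ is immediate since $\alpha^*$ is an isomorphism and $\hw(0)=\infty$.

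The main obstacle I anticipate is purely bookkeeping: verifying that homotopy-commutativity of the "inverse" square really follows from that of the original one, and keeping straight which composites appear where. Once that is in place, every step is a direct application of Propositions \ref{HOMOT} and \ref{IZQ} and of the fact that homotopic maps agree on cohomology.
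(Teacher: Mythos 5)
Your proposal is correct and follows essentially the same route as the paper: transport a test map $\phi$ across the square via $\alpha$ (resp. its homotopy inverse), control the resulting distance using Propositions \ref{HOMOT}, \ref{IZQ} (or \ref{EQUIV1}/\ref{EQUIV2}), and use $\bar\alpha^*\alpha^*=\id$ on cohomology. The paper writes out one inequality and declares the other ``analogous,'' whereas you make the second explicit by applying the first to the inverted square; this is a presentational difference only.
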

\begin{proof}Let $\hw^\prime(\alpha^*u)\geq k+1$, and consider $\phi \colon A\to X$ such that $\D(f\phi,g\phi)\leq k$.
	If $\bar\alpha\colon X \to X^\prime$ is the homotopy inverse of $\alpha$, then  (Corollaries \ref{EQUIV1} and  \ref{EQUIV2}),
	$$\D(f^\prime\bar\alpha\phi,g^\prime\bar\alpha\phi)=\D(\beta f^\prime \bar\alpha\phi, \beta g^\prime \bar\alpha\phi)=D(f\alpha\bar\alpha\phi,g\alpha\bar\alpha\phi)=\D(f,g)\leq k$$
	because $\alpha\bar\alpha\phi\simeq \phi$, so 
	$0=(\bar\alpha\phi)^*(\alpha^*u)=\phi^*u$. This proves that $\hw(u)\geq k+1$. The other implication is analogous.
\end{proof}

Our invariant can be seen as a generalization of those introduced by several authors, including Rudyak \cite{RUDYAK2} and Strom \cite{STROM}. 

From now on we shall assume that our cohomology classes are in $\mathcal{J}(f,g)=\im (f^*-g^*)$ because it is there where we can assume that $\hw$ is well defined, as the following Lemma proves.

\begin{lemma}If $u\in \mathcal{J}(f,g)$ then $\hw(u)\geq 1$.\end{lemma}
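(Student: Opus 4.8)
The plan is to show that $\hw(u) \geq 1$ unpacks, by definition, to the statement that $\phi^* u = 0$ for every map $\phi \colon A \to X$ with $\D(f\circ\phi, g\circ\phi) \leq 0$, i.e.\ with $f\circ\phi \simeq g\circ\phi$. So it suffices to prove: if $u = f^*v - g^*v$ for some $v \in H(Y;R)$ and $f\circ\phi \simeq g\circ\phi$, then $\phi^* u = 0$.

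This is a direct computation with induced maps on cohomology. First I would write $\phi^* u = \phi^*(f^* v - g^* v) = (f\circ\phi)^* v - (g\circ\phi)^* v$, using functoriality of cohomology and linearity of $\phi^*$. Then, since $f\circ\phi \simeq g\circ\phi$, homotopy invariance of cohomology gives $(f\circ\phi)^* = (g\circ\phi)^*$ as maps $H(Y;R) \to H(A;R)$, so the two terms cancel and $\phi^* u = 0$. Since this holds for an arbitrary such $\phi$, the greatest $k$ for which the defining condition holds is at least $0$, hence $\hw(u) = k+1 \geq 1$.

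There is essentially no obstacle here; the only thing to be careful about is the degenerate case $u = 0$, which is covered by the convention $\hw(0) = \infty \geq 1$, so the statement holds trivially there as well. The argument is the base case of the weight filtration: every class pulled back from $\mathcal{J}(f,g)$ automatically vanishes on homotopy domains of the pair $(f,g)$, which is exactly what makes $\mathcal{J}(f,g)$ the natural home for the homotopy weight.

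\begin{proof}
If $u = 0$ then $\hw(u) = \infty \geq 1$ by convention, so assume $u \neq 0$ and write $u = f^* v - g^* v$ for some $v \in H(Y;R)$. Let $\phi \colon A \to X$ be any map with $\D(f\circ\phi, g\circ\phi) \leq 0$, that is, with $f\circ\phi \simeq g\circ\phi$. By functoriality and linearity,
$$\phi^* u = \phi^*(f^* v - g^* v) = (f\circ\phi)^* v - (g\circ\phi)^* v.$$
Since $f\circ\phi \simeq g\circ\phi$, homotopy invariance of cohomology gives $(f\circ\phi)^* = (g\circ\phi)^*$, hence $\phi^* u = 0 \in H(A;R)$. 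As $\phi$ was arbitrary, the condition in the definition of homotopy weight is satisfied for $k = 0$, so $\hw(u) \geq 1$.
\end{proof}
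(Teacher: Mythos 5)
Your proof is correct and is essentially identical to the paper's: both write $u=(f^*-g^*)v$ and use functoriality plus homotopy invariance to get $\phi^*u=(f\phi)^*v-(g\phi)^*v=0$ whenever $f\phi\simeq g\phi$. The extra remark about the degenerate case $u=0$ is fine but not needed, since the convention $\hw(0)=\infty$ already covers it.
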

\begin{proof}If $\D(f\phi,g\phi)=0$ then $f\phi\simeq g\phi$, so
	$\phi^*(f^*-g^*)=(f\phi)^*-(g\phi)^*=0$. Since $u=(f^*-g^*)v$ for some $v\in H(Y;R)$, we have $\phi^*u=0$, and the result follows.\end{proof}

\begin{lemma}For any non-zero class $u\in \mathcal{J}(f,g)\subset H(X)$ we have $\hw(u)\leq\D(f,g).$\end{lemma}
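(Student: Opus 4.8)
The plan is to bound $\hw(u)$ from above by exhibiting a single map $\phi$ that certifies the weight cannot exceed $\D(f,g)$. The natural choice is the identity $\phi=\id_X\colon X\to X$. With this choice we have $f\circ\phi=f$ and $g\circ\phi=g$, so $\D(f\circ\phi,g\circ\phi)=\D(f,g)$. Now set $k=\D(f,g)$; then $\phi$ is a map satisfying $\D(f\circ\phi,g\circ\phi)\leq k$, but $\phi^*u=\id_X^*u=u\neq 0$ by hypothesis. Hence the defining condition for $\hw(u)\geq k+1$ fails (there exists a map with $\D(f\circ\phi,g\circ\phi)\leq k$ and $\phi^*u\neq 0$), so $\hw(u)\leq k=\D(f,g)$.

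Concretely, I would write: Let $\phi=\id_X$. Then $\D(f\circ\phi,g\circ\phi)=\D(f,g)$, and if it were the case that $\hw(u)\geq\D(f,g)+1$, the definition of homotopy weight (applied with $k=\D(f,g)$) would force $\phi^*u=u=0$, contradicting $u\neq 0$. Therefore $\hw(u)\leq\D(f,g)$.

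There is essentially no obstacle here: the statement is a direct unwinding of the definition of $\hw$, and the only subtlety is making sure $u$ being nonzero is genuinely used — it is, since for $u=0$ one has $\hw(0)=\infty$ by convention and the inequality would be false. It is worth noting that the hypothesis $u\in\mathcal{J}(f,g)$ is not actually needed for this particular bound (it is needed for the complementary Lemma $\hw(u)\geq 1$), but including it does no harm and keeps the two lemmas parallel. I would keep the proof to two or three lines.

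\begin{proof}Take $\phi=\id_X\colon X \to X$. Then $f\circ\phi=f$ and $g\circ\phi=g$, so $\D(f\circ\phi,g\circ\phi)=\D(f,g)$. If we had $\hw(u)\geq \D(f,g)+1$, then applying the definition of homotopy weight with $k=\D(f,g)$ to the map $\phi$ would give $\phi^*u=u=0$, contradicting the assumption that $u$ is non-zero. Hence $\hw(u)\leq \D(f,g)$.\end{proof}
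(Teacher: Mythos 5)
Your proof is correct and is essentially identical to the paper's own argument: both take $\phi=\id_X$ and argue by contradiction (contrapositive) from the definition of $\hw$. Your added remarks about why $u\neq 0$ is needed and why $u\in\mathcal{J}(f,g)$ is not are accurate.
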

\begin{proof}If $\hw(u)\geq \D(f,g)+1=k+1$, then $\D(f\circ \id_X,g\circ \id_X)\leq k$, so $u=\id_X^*u=0$.\end{proof}

\begin{theorem}\label{WEIGHT}Let $u=u_0\smile\cdots\smile u_k$ be a cup product of cohomology classes in $\mathcal{J}(f,g)$. Then
	$$\hw(u)\geq \sum_{j=0}^k\hw(u_j).$$
\end{theorem}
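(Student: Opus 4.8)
The plan is to mimic the classical Fadell--Husseini style subadditivity argument for category weight, adapted to the setting of homotopic distance. Write $\hw(u_j)=n_j+1$, so that the claim becomes $\hw(u)\geq \sum_{j=0}^k(n_j+1)=\left(\sum_{j=0}^k n_j\right)+k+1$; equivalently, setting $N=\sum_{j=0}^k n_j + k$, I must show that for every continuous map $\phi\colon A\to X$ with $\D(f\circ\phi,g\circ\phi)\leq N$ we have $\phi^*u=0$. So fix such a $\phi$ and an open cover $\{W_0,\dots,W_N\}$ of $A$ on which $f\circ\phi\simeq g\circ\phi$ on each member.

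The key step is a relative/excision argument exactly parallel to the proof of Theorem \ref{LCP}. First I would group the cover $\{W_0,\dots,W_N\}$ into $k+1$ blocks $\Omega_0,\dots,\Omega_k$, where $\Omega_j$ is the union of $n_j+1$ of the $W_i$'s (this uses $N+1 = \sum (n_j+1)$). Then each restriction $\phi|_{\Omega_j}\colon \Omega_j\to X$ satisfies $\D\big(f\circ(\phi|_{\Omega_j}),g\circ(\phi|_{\Omega_j})\big)\leq n_j$ by the sub-additivity property (Proposition \ref{lema:subadditivity}): on $\Omega_j$ we have a cover by $n_j+1$ open sets (the chosen $W_i$'s) on which $f\circ\phi\simeq g\circ\phi$, hence the distance is at most $n_j$. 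Since $\hw(u_j)=n_j+1$, the defining property of homotopy weight applied to the map $\phi|_{\Omega_j}$ gives $(\phi|_{\Omega_j})^*u_j=0$, i.e. $\phi^*u_j$ lies in the kernel of the restriction $H(A)\to H(\Omega_j)$, so $\phi^*u_j = j_{\Omega_j}(\widetilde{v_j})$ for some relative class $\widetilde{v_j}\in H(A,\Omega_j)$.

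Now I conclude via the commutativity of the relative cup product with the maps $j_{(-)}\colon H(A,-)\to H(A)$, exactly as in the diagram used in the proof of Theorem \ref{LCP}:
$$\phi^*u = \phi^*u_0\smile\cdots\smile\phi^*u_k = j_{\Omega_0}(\widetilde{v_0})\smile\cdots\smile j_{\Omega_k}(\widetilde{v_k}) = j_{\Omega_0\cup\cdots\cup\Omega_k}\big(\widetilde{v_0}\smile\cdots\smile\widetilde{v_k}\big).$$
But $\Omega_0\cup\cdots\cup\Omega_k = W_0\cup\cdots\cup W_N = A$, so the last term lies in the image of $H(A,A)=0$, hence $\phi^*u=0$. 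As $\phi$ was arbitrary with $\D(f\phi,g\phi)\leq N$, this proves $\hw(u)\geq N+1 = \sum_{j=0}^k\hw(u_j)$. (If some $\hw(u_j)=\infty$, i.e. $u_j=0$, then $u=0$ and $\hw(u)=\infty$ trivially, so we may assume all $n_j$ are finite.)

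The main obstacle I anticipate is purely bookkeeping: one must be careful that $\phi^*$ commutes with the relative cup product and with the long exact sequence maps $j_{(-)}$, i.e. that the naturality square relating $(A,\Omega_j)$ over $A$ to the absolute cohomology is exactly the one cited from Spanier in the proof of Theorem \ref{LCP}, now applied with $\phi$ pulled back first. There is no genuine difficulty here since $\phi^*$ is a natural transformation of cohomology theories, but the indexing of the blocks and the off-by-one normalization ($\hw=n+1$ versus distance $=n$) need to be tracked precisely. Note also that no normality hypothesis on $A$ is needed, since we only use Proposition \ref{lema:subadditivity} and the relative cup product, not Lemma \ref{JOHNJEFF}.
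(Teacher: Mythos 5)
Your proof is correct and follows essentially the same route as the paper's: partition the cover of $A$ into blocks of sizes $n_j+1$, apply the defining property of $\hw(u_j)$ to $\phi$ restricted to each block, and conclude via the relative cup product landing in $H(A,A)=0$, exactly as in the proof of Theorem \ref{LCP}. The only cosmetic difference is that the paper reduces to the case $k=1$ (two blocks) and iterates, while you handle all $k+1$ blocks at once; your observation that sub-additivity and normality are not actually needed is also consistent with the paper's argument.
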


\begin{proof} It is enough to prove the result when $k=1$. Let $\hw(u_0)=m+1$ and $\hw(u_1)=n+1$. We want to prove that $\hw(u_0\smile u_1)\geq m+n+2$. Let $\phi\colon A \to X$ such that $\D(f\phi,g\phi)\leq m+n+1$, then there exists an open covering $\{U_0,\dots,U_{m+n+1}\}$  of $A$ such that $f\phi_{\vert U_j}\simeq g\phi_{\vert U_j}$ for all $j$.  Define $V_0=U_0\cup\cdots\cup U_m$ and $V_1=U_{m+1}\cup\dots\cup U_{m+n+1}$. Then $\D(f\phi_{\vert V_0}, g\phi_{\vert V_0})=m$, so $\phi_{\vert V_0}^*u_0=0$. Analogously, $\D(f\phi_{\vert V_0}, g\phi_{\vert V_0})=n$ implies $\phi_{\vert V_1}^*u_1=0$.
	
	Now,  we consider the long exact sequence of the pair $(A,V_0)$, 
	$$\begin{tikzcd}
	& & {H}^{m}(X) \arrow{d}[swap]{\phi^*}\\
	\cdots \arrow[r] & {H}^{m}(A,V_0) \arrow[r,"{j_0}"] &  {H}^{m}(A)  \arrow[r,"i_0^*"] & {H}^{m}(V_0) \arrow[r] & \cdots
	\end{tikzcd}
	$$
	Since $i_0^*\phi^*u_0=0$, there exists $\xi_0\in H(A)$ such that $j_0(\xi)=\phi^*(u_0)$. Analogously,  $\phi^*(u_1)=j_1(\xi_1)$. Then, as in the proof of Theorem \ref{LCP}, we have  
	\begin{align*}
	\phi^*(u_0\smile u_1)&= \phi^*(u_0)\smile\phi^*(u_1)=j_0(\xi_0)\smile j_1(\xi_1)\\
	&=j_{01}(\xi_0\smile\xi_1)=0\in H(A;V_0\cup V_1).
	\end{align*}
\end{proof}

Theorem \ref{LCP} can be read as follows: if $u=u_1\smile\dots\smile u_k\neq 0$ is a non-zero product of $k$ cohomology classes in $\mathcal{J}(f,g)$, then $D(f,g)>k$.
Combining the latter Lemmas and Proposition we have proved:

\begin{theorem}\label{LOWB}If $u_0\smile\cdots\smile u_k\neq 0$ is a non-zero product of $k+1$ cohomology classes in $\mathcal{J}(f,g)$ then $\D(f,g)\geq\sum_{j=0}^k \hw(u_j)$.
\end{theorem}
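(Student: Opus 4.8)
Theorem \ref{LOWB} is an immediate synthesis of the preceding Lemmas, Proposition, and Theorem \ref{WEIGHT}, so the plan is to assemble those pieces in the right order rather than to do any new work.

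The plan is to argue by contraposition on the desired inequality, combined with the monotonicity of homotopy weight in the product. First I would invoke the hypothesis: let $u = u_0 \smile \cdots \smile u_k$ with each $u_j \in \mathcal{J}(f,g)$ and $u \neq 0$. By Theorem \ref{WEIGHT} applied inductively (the statement there is for a general cup product of $k+1$ factors), we have
$$\hw(u) \;\geq\; \sum_{j=0}^k \hw(u_j).$$
Next I would bring in the second Lemma preceding Theorem \ref{WEIGHT}: since $u$ is itself a cup product of classes in $\mathcal{J}(f,g)$, it lies in... well, more carefully, the Lemma stating $\hw(u)\leq \D(f,g)$ requires $u$ to be a \emph{non-zero} class in $\mathcal{J}(f,g)$. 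Here one should note that although $\mathcal{J}(f,g)$ is not assumed to be a ring, the proof of that Lemma only uses that $u = \id_X^* u$ would vanish if $\hw(u) \geq \D(f,g)+1$; and the proof of Theorem \ref{WEIGHT} shows $\hw$ of the product is controlled exactly as needed, so the chain $\phi^*(f^*-g^*) = 0$ argument from the first Lemma need not be re-run. The cleanest route: the second Lemma's proof works verbatim for any non-zero $u \in H(X)$ once we know $\hw(u) \geq 1$ is meaningfully defined, and $u \neq 0$ is in the hypothesis. So I would state: because $u \neq 0$, the Lemma gives $\hw(u) \leq \D(f,g)$.

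Chaining the two displayed inequalities then yields
$$\D(f,g) \;\geq\; \hw(u) \;\geq\; \sum_{j=0}^k \hw(u_j),$$
which is exactly the assertion. I would also remark, to close the loop with the first Lemma and thereby make the bound genuinely stronger than Theorem \ref{LCP}, that each $\hw(u_j) \geq 1$ since $u_j \in \mathcal{J}(f,g)$, so the right-hand side is at least $k+1$, recovering $\D(f,g) \geq k+1 > k$; this is the "Theorem \ref{LCP} can be read as follows" paragraph made precise, and shows where a strict improvement occurs, namely whenever some $\hw(u_j) \geq 2$.

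The only genuine subtlety — and the one place I would be careful in writing it up — is making sure the Lemma "$\hw(u) \leq \D(f,g)$ for non-zero $u \in \mathcal{J}(f,g)$" legitimately applies to the product class $u_0 \smile \cdots \smile u_k$, since a cup product of elements of $\mathcal{J}(f,g)$ need not lie in $\mathcal{J}(f,g)$. I expect this to be the main (minor) obstacle: either one checks that the Lemma's proof only needs $u \in H(X)$, $u \neq 0$ (it does — the proof substitutes $\phi = \id_X$ and concludes $u = \id_X^* u = 0$, contradicting $u \neq 0$, using nothing about $\mathcal{J}$), or one weakens the appeal accordingly. No new estimates are needed; the entire proof is three lines of inequality-chaining once this point is dispatched.
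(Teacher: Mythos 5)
Your proposal is correct and matches the paper's own (very terse) argument, which simply combines Theorem \ref{WEIGHT} with the Lemma $\hw(u)\leq\D(f,g)$ to get $\D(f,g)\geq\hw(u_0\smile\cdots\smile u_k)\geq\sum_j\hw(u_j)$. Your observation that the upper-bound Lemma is stated for non-zero classes in $\mathcal{J}(f,g)$ but its proof only needs $u\neq 0$ in $H(X)$ (since a cup product of elements of $\mathcal{J}(f,g)$ need not lie in $\mathcal{J}(f,g)$) is a legitimate point the paper glosses over, and you dispatch it correctly.
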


The interest of this result is that it is possible to find 
elements of high category weight. For instance, we can mimic Theorem 6 in \cite{FARBER-GRANT}, originally stated only for the TC weight. For simplicity we only consider Steenrod squares, but it is possible to state it in a much larger context for other cohomology operations.

Let $\theta=\Sq^i\colon H^p(X;\Z_2) \to H^{p+i}(X;\Z_2)$, for $0\leq i \leq p$, be a Steenrod square.  Each one of these squaring operations is a morphism of abelian groups that  is natural and commutes with the connecting morphisms in the Mayer-Vietoris sequence. Its {\em excess} equals $i$, so $\theta(u)=0$ if $u\in H^{n-1}(X;\Z_2)$ with $n\leq i$ \cite[Theorem 1]{MOSHERTANGORA}.

\begin{theorem}\label{FHLCP}If $n\leq i$ and $u\in \mathcal{J}(f,g)\subset H^n(X;\Z_2)$, then $\hw(\theta(u))\geq 2$.\end{theorem}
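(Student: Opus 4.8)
The plan is to show that $\theta(u)$ vanishes whenever pulled back along a map $\phi$ with $\D(f\circ\phi, g\circ\phi)\leq 1$, which is exactly the condition $\hw(\theta(u))\geq 2$. So first I would take an arbitrary continuous map $\phi\colon A\to X$ satisfying $\D(f\phi, g\phi)\leq 1$; by definition there is an open cover $\{U_0, U_1\}$ of $A$ with $f\phi_{\vert U_j}\simeq g\phi_{\vert U_j}$ for $j=0,1$. As in the proof of Theorem~\ref{LCP}, each restriction $\D(f\phi_{\vert U_j}, g\phi_{\vert U_j})=0$ forces $\phi^*u$ to lie in the image of $j_{U_j}\colon H^n(A, U_j)\to H^n(A)$, since $\phi^*u = \phi^*(f^*-g^*)v$ for some $v\in H^n(Y;\Z_2)$ and $(i_{U_j})^*\phi^*u = (f\phi_{\vert U_j})^*v - (g\phi_{\vert U_j})^*v = 0$. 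Thus I can write $\phi^*u = j_{U_0}(a_0) = j_{U_1}(a_1)$ for suitable relative classes $a_0\in H^n(A, U_0)$, $a_1\in H^n(A, U_1)$.

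Next I would apply the naturality of the Steenrod square $\theta = \Sq^i$ with respect to the maps of pairs $(A,\emptyset)\to (A, U_j)$ and, crucially, its compatibility with the connecting homomorphisms — equivalently, the fact that $\Sq^i$ is defined on relative cohomology and commutes with $j_{U_j}$. This gives $\theta(\phi^*u) = \phi^*\theta(u)$ by naturality of $\Sq^i$, and at the same time $\phi^*\theta(u) = \theta(j_{U_0}(a_0)) = j_{U_0}(\theta(a_0))$, where now $\theta(a_0)\in H^{n+i}(A, U_0)$. The point is that the same class $\phi^*\theta(u)$ also equals $j_{U_1}(\theta(a_1))$. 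Now I would invoke the relative cup-product / the fact that $\theta$ on the pair $(A, U_0\cup U_1) = (A, A)$ lands in $H^{n+i}(A,A)=0$: more precisely, following the argument of Theorem~\ref{WEIGHT}, one observes that $\phi^*\theta(u)$ factors through $H^{n+i}(A, U_0)\to H^{n+i}(A)$ and, using the second factorization through $U_1$, through $H^{n+i}(A, U_0\cup U_1)=0$. The combinatorial bookkeeping here is exactly that of Theorem~\ref{WEIGHT} with $k=1$: two pieces of the cover, two relative lifts, and the product (here replaced by a single cohomology operation) dies in the cohomology of $(A,A)$.

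There is, however, a subtlety that makes this not a verbatim copy of Theorem~\ref{WEIGHT}: a cohomology operation applied to a class in the image of $j_{U}$ need not obviously land in the image of $j_{U\cup V}$ the way a cup product of two relative classes does. The clean way around this is to use the excess hypothesis directly together with the Mayer--Vietoris sequence for $\{U_0, U_1\}$. I would write the Mayer--Vietoris sequence relating $H^*(A)$, $H^*(U_0)\oplus H^*(U_1)$ and $H^*(U_0\cap U_1)$; since $\phi^*u$ restricts to zero on each $U_j$, it is the image under the connecting map $\partial$ of a class $w\in H^{n-1}(U_0\cap U_1;\Z_2)$. Then $\phi^*\theta(u) = \theta(\phi^*u) = \theta(\partial w) = \partial(\theta w)$ because $\Sq^i$ commutes with the Mayer--Vietoris connecting morphism (stated in the paragraph before the theorem). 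But $w\in H^{n-1}(U_0\cap U_1;\Z_2)$ with $n\leq i$, so by the excess property $\Sq^i(w)=0$, hence $\partial(\theta w)=0$ and $\phi^*\theta(u)=0$.

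Finally, I would conclude: since $\phi$ was an arbitrary map with $\D(f\phi,g\phi)\leq 1$ and $\phi^*\theta(u)=0$ in every case, the definition of homotopy weight gives $\hw(\theta(u))\geq 2$, as claimed. The main obstacle is the second paragraph's issue — making sure the step "$\theta$ commutes with the relevant connecting map" is applied to the right exact sequence; the Mayer--Vietoris route sidesteps the relative-cup-product machinery entirely and is the cleanest, so I would present the proof in that form, citing the excess bound \cite[Theorem 1]{MOSHERTANGORA} for the vanishing $\Sq^i(w)=0$ when $n\leq i$.
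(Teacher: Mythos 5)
Your final argument --- expressing $\phi^*u$ as $\delta w$ for some $w\in H^{n-1}(U_0\cap U_1;\Z_2)$ via the Mayer--Vietoris sequence, commuting $\Sq^i$ past the connecting morphism, and killing $\Sq^i(w)$ by the excess bound since $\deg w = n-1 < i$ --- is precisely the proof given in the paper. Your preliminary discussion of why the relative-cup-product bookkeeping of Theorem~\ref{WEIGHT} does not transfer verbatim to a cohomology operation is a correct observation, and the Mayer--Vietoris route you settle on is the paper's own.
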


\begin{proof}Let $\phi\colon A \to X$ with $D(f\phi,g\phi)\leq 1$, so $A=U_0\cup U_1$ with $f\phi_{\vert U_j}\simeq g\phi_{\vert U_j}$, for $j=0,1$. Consider the Mayer-Vietoris sequence
	$$\begin{tikzcd}[column sep=small]
	\dots\arrow[r] &H^{n-1}(U_0\cap U_1)\arrow[r,"\delta"]&H^n(A)\arrow[r]&H^n(U_0)\oplus H^n(U_1)\arrow[r]&\dots
	\end{tikzcd}$$
	
	Since $\phi^*u=\phi^*(f^*-g^*)v=\big(f\phi)^*-(g\phi)^*\big)v$ is zero on each $U_j$, there exists $w\in H^{n-1}(U_0\cap U_1)$ such that $\delta \omega =\phi^*u$. But then
	$$\phi^*(\theta u)=\theta(\phi^*u)=\theta(\delta \omega)=\delta(\theta \omega)=0,$$
	where the nullity holds because $\omega$ has degree $n-1<i$.
\end{proof}
\section{Fibrations}\label{FIBRATIONS}
\subsection{Statement of results}
A well known result from Varadarajan \cite{VARADARAJAN} states that if $\pi\colon E \to B$ is a (Hurewicz) fibration with generic fiber $F$ and path-connected base $B$, then
\begin{equation}\label{FORMULVAR}
\cat(E)+1\leq \big(\cat(B)+1\big)\big(\cat(F)+1\big).
\end{equation}

On the other hand, Farber and Grant \cite{FARBER-GRANT} proved that 
\begin{equation}\label{FGTC}
\TC(E)+1\leq \big(TC(F)+1\big)\times \cat \big(B\times B\big).
\end{equation}

We shall see that both results are particular cases of a much more general situation. Let $\pi^\prime\colon E^\prime \to B^\prime$ be another fibration with path-connected base $B^\prime$ and generic fibre $F^\prime$, and take two fiber-preserving maps $f,g\colon E \to E^\prime$, with induced maps $\bar f,\bar g\colon B \to B^\prime$. That is, we have $\pi^\prime \circ f=\bar f\circ\pi$ and $\pi^\prime \circ g=\bar g\circ\pi$, as in the  commutative diagram below:
$$
\begin{tikzcd}
E\arrow[d,"\pi"']\arrow[r, shift left,"f"]\arrow[r, shift right,"g"']&E^\prime\arrow[d,"\pi^\prime"]\\
B\arrow[r, shift left,"\bar f"]\arrow[r, shift right,"\bar g"']&B^\prime
\end{tikzcd}
$$

Our aim is to prove the following result:
\begin{theorem}\label{FIBRATION}Let $b_0\in B$ with $\bar f(b_0)=b_0^\prime=\bar g(b_0)$. If $f_0,g_0\colon F_0\to F_0^\prime$ are the induced maps between the fibers of $b_0$ and $b_0^\prime$,  then
	$$\D(f,g)+1\leq \big(\D(f_0,g_0)+1\big)\times \big(\cat(B)+1\big).$$\end{theorem}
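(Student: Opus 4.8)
The plan is to follow the classical Varadarajan argument, but phrased entirely in terms of homotopic distance and the \v{S}varc-genus characterization from Theorem \ref{DISTSECAT}. First I would reduce to a statement about open covers of the base. Suppose $\cat(B)\le m$, so $B=V_0\cup\dots\cup V_m$ with each $V_i$ categorical, and suppose $\D(f_0,g_0)\le n$, so $F_0=A_0\cup\dots\cup A_n$ with $(f_0)_{\vert A_j}\simeq (g_0)_{\vert A_j}$ for each $j$. I want to build an open cover of $E$ indexed by pairs $(i,j)$ with $0\le i\le m$, $0\le j\le n$ — that is, $(m+1)(n+1)$ sets — on each of which $f$ and $g$ restrict to homotopic maps, which gives $\D(f,g)+1\le (m+1)(n+1)$.

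The key step is the construction over each categorical $V_i$. Since $V_i$ is categorical in $B$, the inclusion $V_i\hookrightarrow B$ is null-homotopic, hence it factors up to homotopy through $b_0$; using the homotopy lifting property of the fibration $\pi$ (and of $\pi'$), one produces a fiber homotopy equivalence over $V_i$ between $\pi^{-1}(V_i)$ and the trivial fibration $V_i\times F_0$, and likewise for $\pi'$ over the categorical-after-pushforward set; one must check that $\bar f(V_i)$ and $\bar g(V_i)$ land in a set over which $E'$ trivializes, which follows because $\bar f,\bar g$ are null-homotopic on $V_i$ (being the composites of the null-homotopic inclusion with maps to $B'$, or more carefully, because $V_i$ is categorical so its image is contractible in $B'$). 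Under these trivializations the maps $f$ and $g$ restricted to $\pi^{-1}(V_i)$ correspond, up to homotopy, to maps $V_i\times F_0 \to V_i\times F_0'$ that are homotopic to $(\id_{V_i}\times f_0)$ and $(\id_{V_i}\times g_0)$ respectively — here I would invoke the homotopy invariance results (Propositions \ref{HOMOT}, \ref{EQUIV1}, \ref{EQUIV2}) to replace $f,g$ on $\pi^{-1}(V_i)$ by these product maps without changing the distance.

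Then over each piece $V_i\times F_0$, I intersect with $V_i\times A_j$ for $j=0,\dots,n$: on $V_i\times A_j$ the map $\id_{V_i}\times f_0$ is homotopic to $\id_{V_i}\times g_0$ because $(f_0)_{\vert A_j}\simeq (g_0)_{\vert A_j}$ (take the product of the constant homotopy on $V_i$ with the given homotopy on $A_j$). Pulling these sets back through the trivialization gives open sets $W_{i,j}\subseteq \pi^{-1}(V_i)\subseteq E$, and as $(i,j)$ ranges over all pairs these cover $E$ since the $V_i$ cover $B$ and, within each $\pi^{-1}(V_i)\cong V_i\times F_0$, the $V_i\times A_j$ cover. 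On each $W_{i,j}$ we have $f_{\vert W_{i,j}}\simeq g_{\vert W_{i,j}}$, so $\D(f,g)\le (m+1)(n+1)-1$, which is the claimed inequality $\D(f,g)+1\le (\D(f_0,g_0)+1)(\cat(B)+1)$.

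I expect the main obstacle to be making the trivialization step precise and functorial enough that $f$ and $g$ really do correspond to product maps simultaneously under compatible trivializations of $\pi$ over $V_i$ and of $\pi'$ over a suitable categorical neighborhood of $\bar f(V_i)\cup\bar g(V_i)$. This requires choosing the null-homotopies of $V_i\hookrightarrow B$ and of $\bar f,\bar g$ on $V_i$ coherently, lifting them via the homotopy lifting property to get fiberwise maps $\pi^{-1}(V_i)\to F_0$ and $(\pi')^{-1}(\text{nbhd})\to F_0'$, and verifying these lifts intertwine $f,g$ with $f_0,g_0$ up to fiber homotopy. The path-connectedness of $B$ and $B'$ is used to guarantee the fibers $F_0,F_0'$ are the generic fibers and that the relevant images can be pushed to $b_0,b_0'$. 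Everything else — the combinatorics of the product cover, the homotopies on $V_i\times A_j$ — is routine once the trivialization is in hand, and the homotopy-invariance lemmas already proved in Section \ref{PROPERTIES} handle the replacement of $f,g$ by product maps without bookkeeping.
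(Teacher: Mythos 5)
Your global architecture is the right one and matches the paper's: a cover of $E$ by $(m+1)(n+1)$ sets indexed by pairs $(i,j)$, obtained by working over a categorical cover of $B$ and a homotopy-distance cover of $F_0$. But the mechanism you propose for the key step over each categorical $V_i$ has a genuine gap. You want to trivialize $\pi'$ over ``a suitable categorical neighborhood of $\bar f(V_i)\cup\bar g(V_i)$'' and justify its existence by the null-homotopy of $\bar f_{\vert V_i},\bar g_{\vert V_i}$. That inference fails: a map being null-homotopic says nothing about its image being contained in a contractible (or trivializing) open subset of $B'$. The restriction $\bar f_{\vert V_i}$ can perfectly well be surjective onto $B'$ even though it is null-homotopic, so the trivialization of $E'$ you need may simply not exist. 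Moreover, the second half of that step --- that under the two trivializations $f$ and $g$ become homotopic to the product maps $\id_{V_i}\times f_0$ and $\id_{V_i}\times g_0$ --- is exactly the hard part of the theorem, and you explicitly defer it (``I expect the main obstacle to be making the trivialization step precise''). As written, the proof is not complete.

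The paper's proof shows that no structure on $E'$ beyond the inclusion $i_0'\colon F_0'\subset E'$ is needed, which is how it sidesteps your obstacle. One only trivializes (in fact, only partially: one only needs the lifted contraction) on the \emph{source} side: lifting the contraction of $U=V_i$ to $b_0$ through $\pi$ gives $\widetilde{\mathcal{C}}\colon \pi^{-1}(U)\times I\to E$ with $\widetilde{\mathcal{C}_0}$ the inclusion and $\widetilde{\mathcal{C}_1}\colon \pi^{-1}(U)\to F_0$. Since $f$ is fiber-preserving and $\bar f(b_0)=b_0'$, the composite $f\circ\widetilde{\mathcal{C}_1}$ automatically lands in $F_0'$ and equals $i_0'\circ f_0\circ\widetilde{\mathcal{C}_1}$ --- no trivialization of $\pi'$ and no identification of $f$ with a product map is required. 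On $\Omega=\pi^{-1}(U)\cap(\widetilde{\mathcal{C}_1})^{-1}(A_j)$ one then concatenates three homotopies: $f\circ\widetilde{\mathcal{C}}$, then $i_0'\circ\mathcal{F}(\widetilde{\mathcal{C}_1}(\cdot),\cdot)$ where $\mathcal{F}$ is the given homotopy $(f_0)_{\vert A_j}\simeq(g_0)_{\vert A_j}$, then the reverse of $g\circ\widetilde{\mathcal{C}}$. Note also that these are ordinary homotopies of maps into $E'$, not fiber homotopies; your insistence on fiberwise correspondence with product maps is more than the statement requires. If you replace your trivialization step by this one-sided deformation, the rest of your argument (the combinatorics of the cover and the homotopies over the $A_j$) goes through as you describe.
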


It is well known that all the fibers $F_b=\pi^{-1}(b)$ of the fibration $\pi$ have the same homotopy type \cite[ Proposition  4.61]{HATCHER}. Also it is known that if the base $B$ is contractible then the fibration is fiber homotopy equivalent
to a product fibration \cite[Corollary~4.63]{HATCHER}. We need a similar statement that will allow us to establish our notations.

\begin{lemma}\label{TRIVCAT} If $U$ is a categorical open set in $B$, which contracts to the point $b_0$, then the fibration $\pi^{-1}(U)\to U$ is fiber homotopy equivalent to the trivial fibration $F_0\times U$. Moreover, on each fiber $F_b$, $b\in U$, the restriction of the homotopy equivalence is a homotopy equivalence $F_b\simeq F_0$.
\end{lemma}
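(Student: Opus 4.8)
The plan is to build on the standard fact \cite[Corollary~4.63]{HATCHER} that a fibration over a contractible base is fiber homotopy equivalent to a product fibration, but to do so in a way that keeps track of the contraction to the distinguished point $b_0$, so that the induced maps on fibers over other points of $U$ are visibly homotopy equivalences onto $F_0$. First I would fix the contracting homotopy $H\colon U\times[0,1]\to U$ with $H(-,0)=\id_U$ and $H(-,1)\equiv b_0$; this gives, in particular, $H(b_0,t)$ a path in $U$ which (after the usual adjustment, replacing $H$ by a homotopy that is stationary at $b_0$, using path-connectedness and the homotopy extension property for the pair $(U\times\{0,1\},U\times[0,1])$ if one wants the contraction based) may be assumed to satisfy $H(b_0,t)=b_0$ for all $t$.

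Next I would invoke the homotopy lifting property of $\pi$ restricted over $U$, i.e. of $\pi_U\colon\pi^{-1}(U)\to U$. Lifting the composite homotopy $\pi^{-1}(U)\xrightarrow{\pi_U}U\times[0,1]\xrightarrow{H}U$ starting from the identity of $\pi^{-1}(U)$, I obtain a homotopy $\widetilde H\colon\pi^{-1}(U)\times[0,1]\to\pi^{-1}(U)$ covering $H$, with $\widetilde H(-,0)=\id$. Then $\widetilde H(-,1)$ maps $\pi^{-1}(U)$ into $\pi^{-1}(b_0)=F_0$; call it $r\colon\pi^{-1}(U)\to F_0$. Pairing it with $\pi_U$ gives a map $\Phi=(r,\pi_U)\colon\pi^{-1}(U)\to F_0\times U$, which is fiber-preserving over $U$. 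Conversely, one produces a fiber-preserving map $\Psi\colon F_0\times U\to\pi^{-1}(U)$ the same way — lift the reverse homotopy (or the homotopy $H$ run backwards from $b_0$) to transport $F_0$ along the paths $t\mapsto H(b,1-t)$ into $F_b$. The two composites $\Psi\circ\Phi$ and $\Phi\circ\Psi$ are then fiber homotopic to the respective identities: this is the usual argument comparing two lifts of homotopic homotopies via the homotopy lifting property, exactly as in the proof of \cite[Corollary~4.63]{HATCHER}, so I would simply cite it rather than reproduce it.

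Finally, for the ``moreover'' clause: because the homotopy $H$ has been arranged so that $b_0$ is stationary, the lift $\widetilde H$ restricted to $F_0=\pi^{-1}(b_0)$ is a self-homotopy of $F_0$ covering the constant homotopy, hence $r|_{F_0}\colon F_0\to F_0$ is homotopic to $\id_{F_0}$; and for any $b\in U$ the restriction $r|_{F_b}\colon F_b\to F_0$ is, by construction, the transport map along the path $t\mapsto H(b,t)$, which is a homotopy equivalence with homotopy inverse the transport along the reverse path (this is precisely the content of \cite[Proposition~4.61]{HATCHER}). Thus the fiber homotopy equivalence $\Phi$ restricts on each $F_b$ to a homotopy equivalence $F_b\simeq F_0$, as claimed.

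The main obstacle I anticipate is purely bookkeeping: ensuring the contracting homotopy can be taken stationary at $b_0$ (so that the identification $F_{b_0}\cong F_0$ is the identity rather than merely a homotopy equivalence), and being careful that all the maps and homotopies produced by the lifting property are genuinely fiber-preserving over $U$. Neither is conceptually hard — the first uses the homotopy extension property of the cofibration $U\times\partial I\hookrightarrow U\times I$, the second just the naturality of the lifts over the base $U$ — but they are the points where a sloppy argument would break down.
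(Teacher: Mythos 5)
Your proposal is correct and follows essentially the same route as the paper: lift the contraction of $U$ to $b_0$ through the homotopy lifting property of $\pi$ over $U$, take the time-one map as the fiberwise comparison map to $F_0$, observe that its restriction to each $F_b$ is the standard transport along the path $t\mapsto H(b,t)$ (hence a homotopy equivalence), and defer the verification that the two fiber-preserving maps are mutually inverse up to fiber homotopy to the standard argument in Hatcher. The extra care you take to make the contraction stationary at $b_0$ is a harmless refinement not needed for the statement as given.
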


\begin{proof}
	There is a homotopy $\mathcal{C}\colon U\times I\to B$ with $\mathcal{C}_0$ the inclusion $U\subset B$ and $\mathcal{C}_1$  the constant map $b_0\colon U \to B$. Then,
	the homotopy lifting property in the following diagram
	\begin{equation}\label{LIFT}
	\begin{tikzcd}
	\pi^{-1}(U)\times \{0\} \arrow[d,"i_0"']\arrow[hookrightarrow]{rr}& &E\arrow[d,"\pi"]\\
	\pi^{-1}(U)\times I \arrow[urr,shift right,dashrightarrow,"\widetilde{\mathcal{C}}"]\arrow[r,"\pi\times \id"']& U\times I\arrow[r,"\mathcal{C}"]& B
	\end{tikzcd}
	\end{equation}
	gives us a map 
	$
	\widetilde{\mathcal{C}}\colon \pi^{-1}(U)\times I \to E
	$
	such that $\widetilde{\mathcal{C}_0}$ is the inclusion $\pi^{-1}(U)\subset E$ and $\pi\widetilde{\mathcal{C}}(x,t)=\mathcal{C}(\pi(x),t)$.  As a consequence, we have a map (we use the same name with a slight abuse of notation) 
	\begin{equation}\label{CONTF0}\widetilde{\mathcal{C}_1}\colon \pi^{-1}(U)\to F_0=\pi^{-1}(b_0).
	\end{equation}
	For each $b\in U$, the path $\mathcal{C}_t(b)$ in $B$ connects the points $b$ and $b_0$, and it lifts, for each $x\in F_b$, to the path $\widetilde{\mathcal{C}_t}(x)$, so the map
	$
	(\widetilde{\mathcal{C}_1})_{\vert F_b}\colon F_b \to F_0
	$
	is the usual one giving the homotopy equivalence between the fibers. 
	
	%
	
	The rest of the proof is similar to the usual one.
\end{proof}
\subsection{Proof of Theorem \ref{FIBRATION}} 
\begin{proof}We now start the proof of Theorem \ref{FIBRATION}. Assume that $\cat(B)\leq m$ and let $B=U_0\cup\cdots\cup U_m$ be a covering by categorical open sets. For each $U=U_i$ take the homotopy $\widetilde{\mathcal{C}}$ in \eqref{LIFT} and the map ${\widetilde{\mathcal{C}}}_1$ in \eqref{CONTF0}. If $\D(f_0,g_0)\leq n$, let $F_0=V_0\cup\cdots\cup V_n$ with $(f_0)_{\vert V_j}\simeq (g_0)_{\vert V_j}$. For each $V=V_j\subset F_0$ take the open set
	$$\Omega(U,V)=\pi^{-1}(U)\cap (\widetilde{\mathcal{C}_1})^{-1}(V)\subset E.$$
	We claim that $\Omega=\Omega(U,V)$ is is a homotopy domain for $f$ and $g$. To see it, we have that the inclusion   $\Omega \subset E$ is homotopical to the map $\widetilde{\mathcal{C}_1}_{\vert \Omega}$ by the homotopy $\widetilde{\mathcal{C}}_{\vert \Omega}$, hence $f_{\vert \Omega}\simeq (f\circ \widetilde{\mathcal{C}_1})_{\vert \Omega}$ and $g_{\vert \Omega}\simeq (g\circ \widetilde{\mathcal{C}_1})_{\vert \Omega}$. But $\widetilde{\mathcal{C}_1}(\Omega)$   is contained in $V\subset F^\prime_0$, and there is a homotopy $\mathcal{F}\colon V\times I\colon F_0\to F_0^\prime $ between $(f_0)_{\vert V}$ and $(g_0)_{\vert V}$, hence $i_0^\prime\circ \mathcal{F}$ is a homotopy between $(i_0^\prime\circ f_0)_{\vert V}$ and $(i_0^\prime\circ g_0)_{\vert V}$, where $i_0^\prime\colon F_0^\prime\subset E^\prime$ is the inclusion. We now define a map $\mathcal{G}\colon \Omega \times I \to E^\prime$ as
	$$\mathcal{G}(x,t)=
	\left\{\begin{array}{ll}
	(f\circ \widetilde{\mathcal{C}})(x,3t),&\quad \mathrm{if}\  0\leq t\leq 1/3,\\
	(i_0^\prime\circ \mathcal{F})(\widetilde{\mathcal{C}_1}(x),3t-1),&\quad \mathrm{if}\  1/3\leq t\leq 2/3,\\
	(g\circ \widetilde{\mathcal{C}})(x,3-3t),&\quad \mathrm{if}\  2/3\leq t\leq 1.
	\end{array}
	\right.$$
	This map is continuous because for $t=1/3$ we have
	$$(f\circ \widetilde{\mathcal{C}})_1(x)=(i_0^\prime\circ f_0)(\widetilde{\mathcal{C}_1}(x))=(i_0^\prime\circ \mathcal{F}_0)(\widetilde{\mathcal{C}_1}(x)),$$
	and for $t=2/3$ we have
	$$(i_0^\prime\circ \mathcal{F}_1)(\widetilde{\mathcal{C}_1}(x))=(i_0^\prime\circ g_0)(\widetilde{\mathcal{C}_1}(x))= (g\circ \widetilde{\mathcal{C}})_1(x).$$
	
	Finally,
	$\mathcal{G}(x,0)= (f\circ \widetilde{\mathcal{C}})_0(x)=f(x)$,
	while
	$\mathcal{G}(x,1)= (g\circ \widetilde{\mathcal{C}})_0(x)=g(x)$.
	
	It is clear that $\{\Omega(U_i,V_j)\}$ is an open covering of $E$, so the result follows.
\end{proof}

\begin{example}By taking $E=E^\prime$, $B=B^\prime$, $f=\id_E$, $\bar f=\id_B$, $g=e_0$ a constant map and $\bar g$ the constant map $b_0=\pi(e_0)$ one obtains
	$$\D(\id_E,e_0)+1\leq \big(D(\id_0,e_0)+1)\times (\cat(B)+1),$$
	and we recover Formula \eqref{FORMULVAR}.
\end{example}

\begin{example}\label{MAINTC}In Theorem \ref{FIBRATION}, take the projections  $p_1,p_2$ as in the following diagram:
	$$\begin{tikzcd}
	E\times E\arrow[d,"\pi\times\pi"']\arrow[r, shift left,"p_1"]\arrow[r, shift right,"p_2"']&E\arrow[d,"\pi"]\\
	B\times B\arrow[r, shift left,"p_1"]\arrow[r, shift right,"p_2"']&B
	\end{tikzcd}$$
	and use  Theorem \ref{PROJECT}. We have $p_1(b_0,b_0)=b_0=p_2(b_0,b_0)$. Since  $(p_1)_0,(p_2)_0\colon F_0\times F_0 \to F_0$ are the projections, Formula \eqref{FGTC} follows.\end{example}

\section{Example}\label{MAINEXAMPLE}
In this section we show an example of a Lie group $G$ and  two maps $f,g\colon G \to G$ such that $\D(f,g)=2=\lcp \Ho(G)$, while $\TC(G)=\cat(G)=3$. 
\subsection{Description of the example}Let $G=\Sp(2)$ be the Lie group of $2\times 2$ quaternionic matrices such that $AA^*=I$ (where $A^*$ denotes the conjugate transpose and $I$ is the identity matrix). Its dimension is $10$. Its cohomology is $\Ho(G)=\Lambda(x^3,x^7)$, the exterior algebra with two generators, so the length of the cup product is $\lcp \Ho(G)=2$.
It is also known that $\TC(G)=\cat(G)=3$ \cite{SCHW}.

Let  $f=\mu_2\colon G \to G$ be  the map $\mu_2(A)=A^2$ and let $g=I$ be  the constant map $I$. We have $f^*\omega=2\omega$ for any bi-invariant form, while $g^*=0$. Then
$$\lcp \mathcal{J}(f,g)=2\leq \D(f,g)\leq 3=\cat(G).$$ 
We want to find a covering of $G$ by three open sets where these maps are homotopic, so
$$\D(\mu_2,I)=2.$$
\subsection{Auxiliary function}\label{AUXFUNCTION}
Our idea is to build  homotopies by means of the gradient flow of  an auxiliary function, namely the function $h\colon G \to \R$ given by the real part of the trace,
$$h(A)=\Re\Tr(A).$$
The reader can find more information about this map in \cite{FRANKEL}. It represents the height of $A$ over an hyperplane, when $G$ is viewed as a submanifold of $\R^{16}$ endowed with the Euclidean metric $\vert X\vert^2=\Re\Tr(X^*X)$.  It is a Morse-Bott function, whose critical set is
$$\Crit{h}=\{B\in G\colon B^2=I\}.$$
It is known that every matrix in $G$ is diagonalizable;  its (right) eigenvalues are quaternions of norm one; and  they are grouped in similarity classes, so they can be assumed to be complex numbers $z$, with $\vert z  \vert=1$ \cite{ZHANG}. 

Hence, the critical set $\Crit h$ is formed by the matrices of $G$ having real eigenvalues $\pm 1$.
That set has three connected components: the two points $\{\pm I\}$ and the  the orbit $\Sigma=\{UPU^*\colon U\in \Sp(2)\}$ of the matrix
\begin{equation}\label{MATRIZP}
P=
\left[\begin{array}{cc}
1&0\\
0&-1\\
\end{array}
\right]
\end{equation}
under the action of the group onto itself by conjugation. Hence $\Sigma\cong \Sp(2)/(\Sp(1)\times \Sp(1))$ is a compact Grassmannian manifold with $\dim\Sigma=4$. 
Note that $-P\in\Sigma$.

To end this preliminaries, we describe the foliated local structure of the gradient flow near the critical set  (see Figure \ref{FLOW1}): 
\begin{figure}[h]
	\begin{center}
		\begin{tikzcd}
			&\arrow[bend right=60]{dd}I\arrow[bend right=30]{dr}&\\
			&&\arrow[bend left=30]{dl}\Sigma\\ 
			&-I&\\   
		\end{tikzcd}
		\caption{The flow of the negative gradient of $h$}
		\label{FLOW1}
	\end{center}
\end{figure}
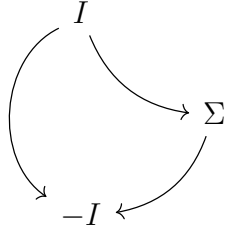

The stable manifold $W^+(\Sigma)$ of the critical submanifold $\Sigma$ (that is, the points of $G$ whose flow line ends at $\Sigma$) fibers over $\Sigma$, and this fiber bundle
$W^+(\Sigma) \to \Sigma$ is isomorphic to the positive normal bundle $p^+\colon \nu^+(\Sigma) \to \Sigma$ defined as follows: for the critical point $B=UPU^*\in\Sigma$, the Hessian $H_B\colon T_BG \to T_BG$ is given by  \cite{GOMMACPER}:
$$H_B(X)=-\frac{1}{2}(XB+BX).$$ 
Its kernel is the tangent space $T_B\Sigma$ to the critical orbit, and the normal space $\nu_B\Sigma$ decomposes as $\nu^+_B\oplus \nu^-_B$ depending on the sign of the eigenvalues of $H_B$. All these constructions are invariant by conjugation, so a simple computation for the particular case $B=P$ shows that 
$\nu_B^+= U\nu^+_PU^*$,  where $X\in \nu^+_P$ if and only if 
$X=
\left[\begin{array}{cc}
0&0\\
0&b
\end{array}
\right]$, with $\Re(b)=0$, so $\dim \nu^+_B=3$.
Hence, the index of the critical manifold $\Sigma$ equals $3$.
Analogously, $W^s(-I)$ is a cell of dimension $10$. 
\subsection{Contractible open sets}\label{CAYLEYCONTRACTIBLE}
We also need to describe the explicit categorical covering of $\Sp(2)$ given by the first author  in \cite{GOMMACPER}. For that, for each  of the matrices $B=\pm I,\pm P$ (see \eqref{MATRIZP}), let us consider the open neighbourhood $$\Omega_G(B)=\{A\in G\colon B+A \mathrm{\ invertible}\}.$$

There are global diffeomorphisms (called Cayley transforms)
$$c_B\colon \Omega_G(B) \to T_{B^*}G,$$
given by
$$c_B(A)=(I-B^*A)(B+A)^{-1},$$
which prove that each $\Omega_G(B)$ is a contractible open set. The inverse map of $c_B$ is $c_{B^*}$.
Moreover
\begin{equation}\label{DOSABS}
\Omega(I)\cup\Omega(-I)= G\setminus \Sigma
\end{equation}
and 
$$\Sigma \subset \Omega(P)\cup\Omega(-P).$$
\subsection{Homotopies} Let 
$U_0=G\setminus (\Sigma\cup \{I\})$.
Since $h$ is a Morse-Bott function, there is a well defined map  sending each point $A\in U_0$ to the final point $\phi_A(+\infty)\in \Sigma\cup \{-I\}$ of the flow line $\phi_A(t)$  passing through it. This map is not continuous, but we shall use the fact that  $\mu_2(\Crit h)=\{I\}$.

Let us describe the following three open sets, covering $G$, where the maps $f=\mu_2$ and $g=I$ will be homotopic:
\begin{enumerate}
	\item
	The first one is $U_0=G\setminus (\Sigma\cup \{I\})$.
	We define 
	$$\mathcal{H}\colon U_0\times [0,+\infty] \to G$$ to be the map 
	$$\mathcal{H}(A,t)=
	\left\{
	\renewcommand{\arraystretch}{1.2}
	\begin{array}{ll}
	\phi_A(t)&\quad\mathrm{if\ }\  0\leq t < +\infty,\\
	\lim\limits_{t\to \infty}\phi_A(t)&\quad\mathrm{if\ } \  t= +\infty.\\
	\end{array} \right.
	$$
	
	Then $\mu_2\circ\mathcal{H}_0=(\mu_2)_{\vert U_0}$ while $\mu_2\circ\mathcal{H}_\infty$ is the constant map $I$. The explicit formulas for the flow $\phi_A(t)$ given in \cite{GOMMACPER} and the description of the stable set $W^s$ that we gave in \ref{AUXFUNCTION} allow to prove that the map $\mu_2\circ\mathcal{H}$ is continuous.
	
	\item
	The second open set is the Cayley domain $U_1=\Omega_G(I)$ (see \ref{CAYLEYCONTRACTIBLE}), which is contractible, hence $\mu_2\simeq I$ on it.
	
	\item
	Finally, the third one will be a tubular open neighbourhood $U_2=N(\Sigma)$ of the critical Grassmannian manifold $\Sigma$, in such a way that $\Sigma$ is a deformation retract of $N(\Sigma)$, by a retraction $$\mathcal{R}\colon N(\Sigma)\times [0,1] \to N(\Sigma)\subset G.$$ That is,  $\mathcal{R}_0$ is the inclusion $N(\Sigma)\subset G$; and the image of $\mathcal{R}_1$ is contained in $\Sigma$. Since the square of any critical point is $I$, we have that $\mu_2\circ \mathcal{R}\colon N(\Sigma) \times [0,1] \to G$ is a homotopy between $\mu_2\circ\mathcal{R}_0=(\mu_2)_{\vert N(\Sigma)}$ and the constant map $\mu_2\circ \mathcal{R}_1=I$.
\end{enumerate}
It is clear that $G=U_0\cup U_1\cup U_2$, so $\D(I,\mu_2)\leq 2$, as stated.

\section{Further ideas}\label{FURTHER}
\subsection{Contiguity distance between simplicial maps} It is easy to adapt our definitions to the simplicial setting. For instance, in \cite{QUIQUETC,QUIQUECAT2,QUIQUECAT1} simplicial versions of LS-category and topological complexity were given by one of the authors. With the classical notion of contiguous simplicial maps replacing that of homotopical continuous maps, one can define a notion of distance between simplicial  maps. 

\begin{definition}
	The {\em contiguity distance} $\SD(\varphi,\psi)$  between two simplicial maps $\varphi,\psi\colon K \to K^\prime$  is the least integer $n\geq 0$ such that there is a covering of $K$ by subcomplexes $K_0,\dots,K_n$ such that the  restrictions $\varphi_{\vert K_j},\psi_{\vert K_j}\colon K_j \to K^\prime$ are in the same contiguity class, for all $j=0,\dots,n$.  If there is no such covering, we define $\SD(f,g)=\infty$.
\end{definition} 

As expected, this notion of contiguity distance generalizes those of simplicial LS category $\scat(K)$ and discrete topological complexity $\TC(K)$: 
\begin{example}
	Given two simplicial complexes $K$ and $L$, denote by $K \prod L$ their categorical product \cite{KOZLOV}. The contiguity distance between the projections $p_1,p_2\colon K \prod K\to K$ equals $\TC(K)$, as follows from \cite[Theorem 3.4]{QUIQUETC}.
\end{example}

\subsection{Higher homotopic distance} The notion of topological complexity has been extended to higher analogs \cite{RUDYAK}. The same can be done for the homotopy distance.

\begin{definition}
	Given $m$ continuous maps $f_1,\ldots,f_m \colon X \to Y$, their {\em $m$-th homotopy distance} $\D(f_1,\ldots,f_m)$ is the least integer $n\geq 0$ such that there exists a covering of $X$  by open subspaces $\{U_0,\dots,U_n\}$, such that  the restrictions $f_{1_{\vert U_j}}\simeq\ldots \simeq f_{m_{\vert U_j}}\colon U_j \to Y$, for all $j=0,\dots,n$.
\end{definition} 

We denote the  $m$-th topological complexity of the space $X$ by $\TC_m(X)$. As expected, the notion of $m$th homotopic distance generalizes the notion of higher topological complexity: 

\begin{theorem}
	Given a path-connected topological space $X$, consider the projections $$p_1,\ldots,p_m\colon X\times \stackrel{m)}\cdots \times X \to X.$$ Then $\D(p_1,\ldots,p_m)=\TC_m(X)$.
\end{theorem}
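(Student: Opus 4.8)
The plan is to mimic the proof of Theorem~\ref{DISTSECAT} and its corollary for the binary case, interpreting $\TC_m(X)$ as the \v{S}varc genus of a suitable fibration and then identifying that fibration with a pullback along $(p_1,\ldots,p_m)$. Recall that $\TC_m(X)$ is by definition $\secat(e_m)$, where $e_m\colon X^{[0,1]}_m \to X^{\times m}$ is the fibration sending a path $\gamma\colon[0,1]\to X$ to $(\gamma(0),\gamma(t_1),\ldots,\gamma(t_{m-2}),\gamma(1))$ for a fixed choice of intermediate times (equivalently, sending an $m$-tuple of paths with common initial point to their $m$ endpoints). So the statement will follow once we show that, for arbitrary maps $f_1,\ldots,f_m\colon X\to Y$, the $m$-th homotopic distance $\D(f_1,\ldots,f_m)$ equals $\secat(q_m)$, where $q_m\colon P_m \to X$ is the pullback of $e_m$ along $(f_1,\ldots,f_m)\colon X\to Y^{\times m}$; specialising to $Y=X$, $f_i=p_i$ makes $(f_1,\ldots,f_m)=\id_{X^{\times m}}$ and $q_m=e_m$, giving $\D(p_1,\ldots,p_m)=\secat(e_m)=\TC_m(X)$.

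First I would describe the total space of the pullback explicitly: $P_m$ consists of pairs $(x,\Gamma)$ where $x\in X$ and $\Gamma$ is a family of paths in $Y$ exhibiting $f_1(x),\ldots,f_m(x)$ as successive values of a single path (or, in the "tuple of paths" model, an $m$-tuple of paths $\gamma_1,\ldots,\gamma_m$ in $Y$ with a common value $\gamma_i(0)$ and with $\gamma_i(1)=f_i(x)$), and $q_m(x,\Gamma)=x$. Then for the inequality $\secat(q_m)\le \D(f_1,\ldots,f_m)$: given an open set $U\subset X$ on which $f_{1|U}\simeq\cdots\simeq f_{m|U}$, choose homotopies $\mathcal{H}_i\colon U\times I\to Y$ realising these homotopies from a common map (one can homotope all the $f_i|_U$ to $f_1|_U$, so set $\mathcal{H}_1$ constant and $\mathcal{H}_i$ a homotopy from $f_1|_U$ to $f_i|_U$); this data assembles, pointwise in $x$, into an element of the fibre of $q_m$ over $x$, hence into a continuous local section $s\colon U\to P_m$ with $q_m\circ s=i_U$. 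Covering $X$ by $n+1$ such sets $U_0,\ldots,U_n$ yields $\secat(q_m)\le n$.

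Conversely, for $\secat(q_m)\ge$ nothing is needed; we want $\D(f_1,\ldots,f_m)\le \secat(q_m)$: if $s\colon U\to P_m$ is a local section of $q_m$, then $s(x)$ is precisely a choice of paths in $Y$ connecting (a common basepoint value and) $f_1(x),\ldots,f_m(x)$, depending continuously on $x$. Evaluating these paths at the parameter gives continuous homotopies $f_{i|U}\simeq f_{j|U}$ for all $i,j$ (all to the common map $x\mapsto \gamma_x(0)$, or directly between consecutive ones), so $f_{1|U}\simeq\cdots\simeq f_{m|U}$ on $U$. A cover of $X$ by $n+1$ sets over which $q_m$ has sections therefore gives an open cover witnessing $\D(f_1,\ldots,f_m)\le n$. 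Combining the two inequalities gives $\D(f_1,\ldots,f_m)=\secat(q_m)$, and the theorem follows by the specialisation above.

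The only genuinely delicate point is bookkeeping around the model for the higher path fibration $e_m$: one must make sure that "a point of the fibre of $q_m$" really is the same data as "a system of homotopies $f_{1|U}\simeq\cdots\simeq f_{m|U}$", including continuity in $x$ and the compatibility of endpoints, and that the passage back and forth (section $\leftrightarrow$ family of homotopies) is continuous in both directions. This is entirely parallel to the $m=2$ case handled in Theorem~\ref{DISTSECAT}, where $e_2=\pi$ is the endpoint fibration, so I expect no essential obstacle beyond carefully fixing one standard model of $\TC_m$ (for instance Rudyak's, using $m$ paths out of a common point) and checking the section/homotopy correspondence in that model; the rest is a routine adaptation of the binary argument.
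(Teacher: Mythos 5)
Your argument is correct and is exactly the natural extension of the paper's own treatment of the $m=2$ case: the paper states this theorem without proof, but its proof of Theorem~\ref{DISTSECAT} (identifying $\D(f,g)$ with $\secat$ of the pullback of the path fibration along $(f,g)$) and the ensuing derivation of Proposition~\ref{PROJECT} are precisely what you are generalising. The points you flag as delicate (choice of model for $e_m$, continuity of the section/homotopy correspondence via the exponential adjunction for the locally compact interval) are handled at the same level of rigour as in the paper's binary argument, so no essential gap remains.
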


\bibliographystyle{plain}
\bibliography{biblio_v10}

\begin{thebibliography}{10}

\bibitem{BARMAK}
Jonathan~A. {Barmak}.
\newblock {\em {Algebraic topology of finite topological spaces and
  applications.}}, volume 2032.
\newblock Berlin: Springer, 2011.

\bibitem{CORNEA}
Octav Cornea, Gregory Lupton, John Oprea, and Daniel Tanr\'{e}.
\newblock {\em Lusternik-{S}chnirelmann category}, volume 103 of {\em
  Mathematical Surveys and Monographs}.
\newblock American Mathematical Society, Providence, RI, 2003.

\bibitem{FARBER}
Michael Farber.
\newblock Topological complexity of motion planning.
\newblock {\em Discrete Comput. Geom.}, 29(2):211--221, 2003.

\bibitem{FARBER2}
Michael Farber.
\newblock Instabilities of robot motion.
\newblock {\em Topology Appl.}, 140(2-3):245--266, 2004.

\bibitem{FGSYM}
Michael Farber and Mark Grant.
\newblock Symmetric motion planning.
\newblock In {\em Topology and robotics}, volume 438 of {\em Contemp. Math.},
  pages 85--104. Amer. Math. Soc., Providence, RI, 2007.

\bibitem{FARBER-GRANT}
Michael Farber and Mark Grant.
\newblock Robot motion planning, weights of cohomology classes, and cohomology
  operations.
\newblock {\em Proc. Amer. Math. Soc.}, 136(9):3339--3349, 2008.

\bibitem{QUIQUETC}
D.~Fern\'{a}ndez-Ternero, E.~Mac\'{i}as-Virg\'{o}s, E.~Minuz, and J.~A.
  Vilches.
\newblock Discrete topological complexity.
\newblock {\em Proc. Amer. Math. Soc.}, 146(10):4535--4548, 2018.

\bibitem{QUIQUECAT2}
D.~Fern\'andez-Ternero, E.~Mac\'{\i}as-Virg\'os, E.~Minuz, and J.A. Vilches.
\newblock Simplicial {L}usternik-{S}chnirelmann category.
\newblock {\em Publicacions Matematiques}, 63:265--293, 2019.

\bibitem{QUIQUECAT1}
D.~Fern\'{a}ndez-Ternero, E.~Mac\'{i}as-Virg\'{o}s, and J.~A. Vilches.
\newblock Lusternik-{S}chnirelmann category of simplicial complexes and finite
  spaces.
\newblock {\em Topology Appl.}, 194:37--50, 2015.

\bibitem{FRANKEL}
Theodore Frankel.
\newblock Critical submanifolds of the classical groups and {S}tiefel
  manifolds.
\newblock In {\em Differential and {C}ombinatorial {T}opology ({A} {S}ymposium
  in {H}onor of {M}arston {M}orse)}, pages 37--53. Princeton Univ. Press,
  Princeton, N.J., 1965.

\bibitem{GOMMACPER}
A.~G\'{o}mez-Tato, E.~Mac\'{i}as-Virg\'{o}s, and M.~J. Pereira-S\'{a}ez.
\newblock Trace map, {C}ayley transform and {LS} category of {L}ie groups.
\newblock {\em Ann. Global Anal. Geom.}, 39(3):325--335, 2011.

\bibitem{HATCHER}
Allen Hatcher.
\newblock {\em Algebraic topology}.
\newblock Cambridge University Press, Cambridge, 2002.

\bibitem{JAMES}
I.~M. {James}.
\newblock {On \(H\)-spaces and their homotopy groups.}
\newblock {\em {Q. J. Math., Oxf. II. Ser.}}, 11:161--179, 1960.

\bibitem{KOZLOV}
Dmitry Kozlov.
\newblock {\em Combinatorial algebraic topology}, volume~21 of {\em Algorithms
  and Computation in Mathematics}.
\newblock Springer, Berlin, 2008.

\bibitem{LS}
Gregory Lupton and J\'{e}r\^{o}me Scherer.
\newblock Topological complexity of {$H$}-spaces.
\newblock {\em Proc. Amer. Math. Soc.}, 141(5):1827--1838, 2013.

\bibitem{MOSHERTANGORA}
Robert~E. Mosher and Martin~C. Tangora.
\newblock {\em Cohomology operations and applications in homotopy theory}.
\newblock Harper \& Row, Publishers, New York-London, 1968.

\bibitem{JOHNJEFF}
John Oprea and Jeff Strom.
\newblock Mixing categories.
\newblock {\em Proc. Amer. Math. Soc.}, 139(9):3383--3392, 2011.

\bibitem{RUDYAK2}
Yuli~B. Rudyak.
\newblock On category weight and its applications.
\newblock {\em Topology}, 38(1):37--55, 1999.

\bibitem{RUDYAK}
Yuli~B. Rudyak.
\newblock On higher analogs of topological complexity.
\newblock {\em Topology Appl.}, 157(5):916--920, 2010.

\bibitem{SCHW}
Paul~A. Schweitzer.
\newblock Secondary cohomology operations induced by the diagonal mapping.
\newblock {\em Topology}, 3:337--355, 1965.

\bibitem{SINGHOF}
Wilhelm Singhof.
\newblock On the {L}usternik-{S}chnirelmann category of {L}ie groups.
\newblock {\em Math. Z.}, 145(2):111--116, 1975.

\bibitem{SPANIER}
Edwin~H. Spanier.
\newblock {\em Algebraic topology}.
\newblock McGraw-Hill Book Co., New York-Toronto, Ont.-London, 1966.

\bibitem{STROM}
Jeffrey~Andrew Strom.
\newblock {\em Category weight and essential category weight}.
\newblock ProQuest LLC, Ann Arbor, MI, 1997.
\newblock Thesis (Ph.D.)--The University of Wisconsin - Madison.

\bibitem{TANAKA}
Kohei {Tanaka}.
\newblock {A combinatorial description of topological complexity for finite
  spaces.}
\newblock {\em {Algebr. Geom. Topol.}}, 18(2):779--796, 2018.

\bibitem{VARADARAJAN}
K.~Varadarajan.
\newblock On fibrations and category.
\newblock {\em Math. Z.}, 88:267--273, 1965.

\bibitem{ZHANG}
Fuzhen Zhang.
\newblock Ger\v{s}gorin type theorems for quaternionic matrices.
\newblock {\em Linear Algebra Appl.}, 424(1):139--153, 2007.

\end{thebibliography}

\end{document}